\renewcommand{\uppercasenonmath}[1]{}
\numberwithin{equation}{section} \theoremstyle{plain}
\newtheorem*{thm*}{Main Theorem}
\newtheorem{thm}{Theorem}[section]
\newtheorem{cor}[thm]{Corollary}
\newtheorem*{cor*}{Corollary}
\newtheorem{lem}[thm]{Lemma}
\newtheorem*{lem*}{Lemma}
\newtheorem{fact}[thm]{Fact}
\newtheorem*{fact*}{Fact}
\newtheorem*{nota*}{Notation}
\newtheorem{prop}[thm]{Proposition}
\newtheorem*{prop*}{Proposition}
\newtheorem{rem}[thm]{Remark}
\newtheorem*{rem*}{Remark}
\newtheorem*{observation*}{Observation}
\newtheorem*{exa*}{Example}
\newtheorem{df}[thm]{Definition}
\newtheorem*{df*}{Definition}
\newtheorem*{con*}{Construction}
\renewcommand{\geq}{\geqslant}
\renewcommand{\leq}{\leqslant}
\begin{document}
\begin{center}
{\large  \bf  Induced complete hereditary cotorsion pairs in $\mathrm{D}(R)$ with respect to Cartan-Eilenberg exact sequences}

\vspace{0.5cm}  Xiaoyan Yang\\
School of Science, Zhejiang University of Science and Technology, Hangzhou 310023\\
E-mail: yangxy@zust.edu.cn
\end{center}

\bigskip
\centerline { \bf  Abstract}
\leftskip10truemm \rightskip10truemm \noindent Given a  complete hereditary cotorsion pair $(\textsf{A},\textsf{B})$ in $\mathrm{Mod}R$, we construct a complete hereditary cotorsion pair in the derived category $\mathrm{D}(R)$ of unbounded complexes with respect to the proper class $\xi$ of cohomologically ghost triangles induced by the Cartan-Eilenberg exact sequences. More specifically, we prove that, each of the classes of projectively coresolved $\xi$-$\mathcal{G}$flat complexes $\mathcal{PGF}(\xi)$, $\xi$-$\mathcal{G}$flat complexes $\mathcal{GF}(\xi)$, $\xi$-$\mathcal{G}$injective complexes $\mathcal{GI}(\xi)$, $\xi$-$\mathcal{G}$projective complexes $\mathcal{GP}(\xi)$ (the last when $R$ is virtually Gorenstein), forms one half of a complete hereditary cotorsion pair in $\mathrm{D}(R)$ with respect to $\xi$. Moreover, various homological dimensions offer additional way to obtain such cotorsion pairs in $\mathrm{D}(R)$ with respect to $\xi$.
\leftskip10truemm \rightskip10truemm \noindent\\[2mm]
{\it KeyWords:} cohomologically ghost triangle; Cartan-Eilenberg exact sequence; complete hereditary cotorsion pair\\
{\it 2020 Mathematics Subject Classification:} 16E05; 16E35; 18G25

\leftskip0truemm \rightskip0truemm
\bigskip
\section* { \bf Introduction}

Since their introduction by Salce \cite{S} in 1979,  cotorsion pairs have evolved from a specialized concept in module theory into a cornerstone of homological algebra, representation theory and algebraic geometry.
A complete cotorsion pair induces a model structure on the abelian category, providing the foundational framework for homotopy theory and homological constructions. A key result in this context, due to Trlifaj and $\mathrm{\check{S}}$aroch and relying on the Eklof-Trlifaj lemma \cite{ET}: in a Grothendieck category with enough projectives, the cotorsion pair $({}^\perp(\mathcal{S}^\perp), \mathcal{S}^\perp)$ cogenerated by any set $\mathcal{S}$ of objects is complete. This result provides a powerful method for constructing complete cotorsion pairs. For example, the flat-cotorsion pair $(\mathsf{F},\mathsf{C})$ was shown to be complete by Bican, El Bashir and Enochs \cite{BBE},  setting a long-standing conjecture.

Connections between model structures and cotorsion pairs were first established by Hovey \cite{H02} and Beligiannis-Reiten
\cite{BR}. There exists an one-one correspondence between abelian model structures and Hovey triples
(see \cite[Theorem 2.2]{H02}).
  A Hovey triple gives rise to two compatible complete cotorsion pairs; and if the two compatible complete cotorsion pairs are hereditary, then the Hovey triple is uniquely determined by the two cotorsion pairs (see \cite[Theorem 1.1]{G15}). Thus, complete cotorsion pairs provide a foundational framework for studying algebraic structures.
  More specifically, any complete hereditary cotorsion pair in $\mathrm{Mod}R$ naturally induces two such pairs in $\mathrm{C}(R)$, to which Hovey's theorem may be applied (see  \cite[Theorem 3.5]{YL11} or \cite[Theorem 2.4]{YD}). Hovey's correspondence has also been extended by Gillespie
\cite{G11} and $\mathrm{\check{S}}\mathrm{\check{t}}\textrm{ov}\acute{{\i}}\mathrm{\check{c}}\textrm{ek}$ \cite{St} to a one-one correspondence between exact model
 structures and  Hovey triples in weakly idempotent complete exact categories. Furthermore, Yang \cite{Y} generalized this correspondence to triangulated categories equipped with a proper class $\xi$ of triangles, establishing a $\xi$-triangulated analogue of Hovey's correspondence.

A Hovey triple consists two complete cotorsion pairs. Beligiannis and Reiten \cite{BR} constructed model structures on
 abelian categories from only one complete hereditary  cotorsion pair. These are generally weaker than
  abelian model structures. Cui-Lu-Zhang \cite[Theorem 1.1]{CLZ} extended this results to weakly idempotent
 complete exact categories. More recently, Hu-Zhang-Zhou further generalized the work of Beligiannis-Reiten and Cui-Lu-Zhang, extending this constructioin
to  weakly idempotent complete extriangulated categories (see \cite[Theorem 1.1]{HZZ}. Extriangulated category introduced in \cite{NP}, provide a
 simultaneous generalization of exact categories and triangulated categories.

 Auslander and Bridger \cite{AB} introduced and studied modules of G-dimension 0 for finitely generated modules over two-sided noetherian rings. To extend this theory to arbitrary modules, Enochs-Jenda-Torrecillas
\cite{EJ95,EJT} introduced Gorenstein projective, Gorenstein injective  and Gorenstein flat modules, along with their associated dimensions (also see \cite{EJ,H}). An
important  recent advance is the introduction of projectively coresolved Gorenstein flat
modules over any  ring $R$, by $\mathrm{\check{S}}$aroch and $\mathrm{\check{S}}\mathrm{\check{t}}\textrm{ov}\acute{{\i}}\mathrm{\check{c}}\textrm{ek}$ \cite{SS}.

Significantly, over certain rings, the classes of Gorenstein projective, Gorenstein injective, Gorenstein flat, projectively coresolved Gorenstein flat  modules, along with their associated dimensions, each form complete hereditary cotorsion pairs in $\mathrm{Mod}R$ (see \cite{GLZ,M,WE,SS}). This structural insight provides a powerful framework for studying Gorenstein rings and Calabi-Yau categories. Moreover, the theory has been successfully extended to categories of quasi-coherent sheaves on schemes, offering new tools for investigating vector bundles and cohomology on singular varieties.

Inspired by the work of Enochs and Jenda in a triangulated setting, Asadollahi and Salarian \cite{AS} introduced $\xi$-$\mathcal{G}$projective and $\xi$-$\mathcal{G}$injective  objects, along with their relative dimensions, with respect to a proper class $\xi$ of triangles.  However, finding a proper class $\xi$ of triangles with enough $\xi$-projectives in a general triangulated category $\mathcal{T}$ remains nontrivial. Let $R$ be a ring with identity. The derived category $\mathrm{D}(R)$ obtained by
formally inverting the quasi-isomorphisms in the category
$\mathrm{C}(R)$ of unbounded complexes.
 An important example of such a proper class $\xi$ in $\mathcal{T}:=\mathrm{D}(R)$ is provided by Beligiannis \cite{B}, based on Cartan-Eilenberg (CE) exact sequences of complexes, described as \emph{cohomologically ghost triangles} in \cite{O}. By \cite[Theorem 3.2]{HZZ0}, $(\mathrm{D}(R),[1],\xi)$ is an extriangulated category, although it is not triangulated.

 In the category $\mathrm{C}(R)$ with Cartan-Eilenberg (CE) exact structure, CE projective and CE injective complexes serve as relative projectives and injectives. Enochs \cite{E} studied the CE projective, CE injective  and CE flat complexes and showed that they induce complete hereditary cotorsion pairs relative to a subfunctor $\overline{\mathrm{Ext}}$ of Ext. Moreover, any hereditary cotorsion pair in $\mathrm{Mod}R$  that is cogenerated by a set can be lifted to a hereditary cotorsion pair in $\mathrm{C}(R)$ that is cogenerated by a set with respect to $\overline{\mathrm{Ext}}$, see  \cite[Theorem 9.4]{E}.

 Let $\textsf{X}$ be a class of $R$-modules. Denote $$\mathcal{X}_\textsf{P}:=\{X\in\mathrm{D}(R)\hspace{0.01cm}|\hspace{0.01cm}\mathrm{coker}(P^{s-1}\rightarrow P^s)\in\textsf{X}\ \textrm{for\ some\ semi-projective resolution}\ P\stackrel{\simeq}\rightarrow X\ \textrm{and\ all}\ s\in\mathbb{Z} \},$$ $$\mathcal{X}_\textsf{I}:=\{Y\in\mathrm{D}(R)\hspace{0.01cm}|\hspace{0.01cm}\mathrm{ker}(I^{s}\rightarrow I^{s+1})\in\textsf{X}\ \textrm{for\ some\ semi-injective resolution}\ Y\stackrel{\simeq}\rightarrow I\ \textrm{and\ all}\ s\in\mathbb{Z}\}.$$
The present paper establishes the following main theorem.

\vspace{2mm} \noindent{\bf Theorem A}\label{Th1.4} {\rm(Theorem \ref{lem2.5}).} {\it{Let $R$ be a ring with identity. Then $(\textsf{A},\textsf{B})$ is a complete hereditary cotorsion pair in $\mathrm{Mod}R$ if and only if
$(\mathcal{A}_\textsf{P},\mathcal{B}_\textsf{I})$ is a complete hereditary cotorsion pair in $\mathrm{D}(R)$ with respect to the proper class $\xi$ of cohomologically ghost triangles.}}
\vspace{2mm}

 The definition of $\mathrm{D}(R)$ does not provide the necessary tools to prove non-trivial properties.
One significant advantage  of model categories is that it offer a powerful framework for studying $\mathrm{D}(R)$ in many contents. As applications of the theorem A, we examine the following classes of objects in $\mathrm{D}(R)$ relative to the proper class $\xi$  of cohomologically ghost triangles:
\begin{center}$\begin{aligned}&\mathcal{GP}(\xi):=\{\xi\textrm{-}\mathcal{G}\textrm{projective\ complexes}\},\\
&\mathcal{GI}(\xi):=\{\xi\textrm{-}\mathcal{G}\textrm{injective\ complexes}\},\\
&\mathcal{GF}(\xi):=\{\xi\textrm{-}\mathcal{G}\textrm{flat\ complexes}\},\\
&\mathcal{PGF}(\xi):=\{\textrm{projectively\ coresolved}\ \xi\textrm{-}\mathcal{G}\textrm{flat\ complexes}\},\\
&\mathcal{P}(\xi)_n:=\{\textrm{complexes\ of}\ \xi\textrm{-}\textrm{projective\ dimension}\ \leq n\},\\
&\mathcal{F}(\xi)_n:=\{\textrm{complexes\ of}\ \xi\textrm{-}\textrm{flat\ dimension}\ \leq n\},\\
&\mathcal{I}(\xi)_n:=\{\textrm{complexes\ of}\ \xi\textrm{-}\textrm{injective\ dimension}\ \leq n\},\\
&\mathcal{GP}(\xi)_n:=\{\textrm{complexes\ of}\ \xi\textrm{-}\mathcal{G}\textrm{projective\ dimension}\ \leq n\},\\
&\mathcal{GI}(\xi)_n:=\{\textrm{complexes\ of}\ \xi\textrm{-}\mathcal{G}\textrm{injective\ dimension}\ \leq n\},\\
&\mathcal{GF}(\xi)_n:=\{\textrm{complexes\ of}\ \xi\textrm{-}\mathcal{G}\textrm{flat\ dimension}\ \leq n\},\\
&\mathcal{PGF}(\xi)_n:=\{\textrm{complexes\ of\ projectively\ coresolved}\ \xi\textrm{-}\mathcal{G}\textrm{flat\ dimension}\ \leq n\}.\end{aligned}$\end{center}
 We show that each of these classes constitutes one half of a complete hereditary cotorsion pair in $\mathrm{D}(R)$ with respect to $\xi$. Applying
Hu-Zhang-Zhou's theorem then yields numerous model structures in the extriangulated category $(\mathrm{D}(R),[1],\xi)$.

\vspace{2mm} \noindent{\bf Theorem B}\label{Th1.4} {\rm(Corollaries \ref{lem1.5} and \ref{lem3.2}).} {\it{Let $R$ be a ring with identity and $n\geq0$.

$(1)$ $(\mathcal{P}(\xi)_n,\mathcal{P}(\xi)^{\bot_\xi}_n)$, $(\mathcal{F}(\xi)_n,\mathcal{F}(\xi)^{\bot_\xi}_n)$ and $({^{\bot_\xi}}\mathcal{I}(\xi)_n,\mathcal{I}(\xi)_n)$ are complete hereditary cotorsion pairs in $\mathrm{D}(R)$ with respect to $\xi$.

$(2)$ $(\mathcal{PGF}(\xi),\mathcal{PGF}(\xi)^{\bot_\xi})$ and $(\mathcal{PGF}(\xi)_n,\mathcal{P}(\xi)^{\bot_\xi}_n\cap\mathcal{PGF}(\xi)^{\bot_\xi})$ are complete hereditary cotorsion pairs in $\mathrm{D}(R)$ with respect to $\xi$.

$(3)$ $(\mathcal{GF}(\xi),\mathcal{GF}(\xi)^{\bot_\xi})$ and $(\mathcal{GF}(\xi)_n,\mathcal{F}(\xi)^{\bot_\xi}_n\cap\mathcal{PGF}(\xi)^{\bot_\xi})$ are complete hereditary cotorsion pairs in $\mathrm{D}(R)$ with respect to $\xi$.

$(4)$ $({^{\bot_\xi}}\mathcal{GI}(\xi),\mathcal{GI}(\xi))$ and $({^{\bot_\xi}}\mathcal{I}(\xi)_n\cap{^{\bot_\xi}}\mathcal{GI}(\xi),\mathcal{GI}(\xi)_n)$ are complete hereditary cotorsion pairs in $\mathrm{D}(R)$ with respect to $\xi$.

$(5)$ If $R$ is a virtually Gorenstein ring, then  $(\mathcal{GP}(\xi),\mathcal{GP}(\xi)^{\bot_\xi})$ and $(\mathcal{GP}(\xi)_n,\mathcal{P}(\xi)^{\bot_\xi}_n\cap\mathcal{GP}(\xi)^{\bot_\xi})$ are complete hereditary cotorsion pairs in $\mathrm{D}(R)$ with respect to $\xi$.}}

\bigskip
\section{\bf Preliminaries}
Throughout $R$ is an associated ring with identity. This section collects some notions and facts about complexes, triangulated categories and cotorsion pairs.

\vspace{2mm}
{\bf Complexes.} We denote by $\mathrm{C}(R)$ the category whose class of objects consists of all
cochain complexes $X$ of $R$-modules,
$$\cdots\longrightarrow X^{s-1}\stackrel{d^{s-1}_X}\longrightarrow X^s\stackrel{d^{s}_X}\longrightarrow X^{s+1}\longrightarrow\cdots,$$such that $d^{s}_Xd^{s-1}_X=0$ for all $s\in\mathbb{Z}$.
Denotes by $\mathrm{K}(R)$
the homotopy category of complexes of $R$-modules and by $\mathrm{D}(R)$ the derived category of complexes of $R$-modules. An isomorphism in $\mathrm{D}(R)$ is marked by $\simeq$. A quasi-isomorphism is marked by a $\simeq$ next to the arrow.
  Let $X$ be a complex and $s\in\mathbb{Z}$. The \emph{$s$-th shift} of $X$ is
denoted  by $X[s]$. Also denote $$\mathrm{Z}^s(X)=\mathrm{ker}d_X^{s},\ \mathrm{B}^{s}(X)=\mathrm{im}d_X^{s-1},\ \mathrm{C}^{s}(X)=\mathrm{coker}d_X^{s-1},\ \mathrm{H}^s(X)=\mathrm{Z}^s(X)/\mathrm{B}^s(X).$$
The complex $X$ is called \emph{contractible} if the identity morphism $\mathrm{id}_X$ is null-homotopic. The \emph{cone} of a map $f:X\rightarrow Y$ in $\mathrm{C}(R)$ is the complex such that $\mathrm{cn}(f)^n=X^{n+1}\oplus Y^n$ and
$d^n_{\mathrm{cn}(f)}(x,y)=(-d^{n+1}_X(x),f^{n+1}(x)+d^{n}_Y(x))$ for each $n\in\mathbb{Z}$.

An $R$-complex $P$ is called \emph{projective} if the functor
$\mathrm{Hom}_{\mathrm{C}(A)}(P,-)$ is exact. An $R$-complex $P$ is called \emph{semi-projective} if $\mathrm{Hom}_R(P,\beta)$ is a
surjective quasi-isomorphism for every surjective quasi-isomorphism $\beta$ in $\mathrm{C}(R)$. A \emph{semi-projective resolution} of an $R$-complex $X$ is a
quasi-isomorphism $P\stackrel{\simeq}\rightarrow X$ of $R$-complexes with $P$ semi-projective. Dually, one can define \emph{injective complex}, \emph{semi-injective} complexes and \emph{semi-injective resolution}. It is well known that every $R$-complex admits both a semi-projective and a semi-injective resolution (see \cite{CFH}).

Let $\textsf{X}$ be a class of $R$-modules. Denote $\mathrm{C}(\textsf{X})$ (resp. $\mathrm{K}(\textsf{X})$) the
class (resp. the homotopy category) of complexes with components in $\textsf{X}$.

\begin{lem}\label{lem4.1} {\rm (\cite[Theorem 3.1]{O}).} Let $\textsf{X}$ be a class of $R$-modules and $X\in\mathrm{K}(\textsf{X})$, and let $L,M\in\mathrm{Mod}R$ be such that $\mathrm{Ext}^{i\geq1}_{R}(L,\textsf{X})=0=\mathrm{Ext}^{i\geq1}_{R}(\textsf{X},M)$. Then for $s\in\mathbb{Z}$, there are long exact sequences
\begin{equation}
\begin{split}
0&\rightarrow\mathrm{Ext}^1_{R}(\mathrm{C}^{s+1}(X),M)\rightarrow\mathrm{H}^{-s}(\mathrm{Hom}_{R}(X,M))
 \rightarrow\mathrm{Hom}_{R}(\mathrm{H}^{s}(X),M)\\
&\rightarrow\mathrm{Ext}^2_{R}(\mathrm{C}^{s+1}(X),M)
 \rightarrow\mathrm{Ext}^1_{R}(\mathrm{C}^{s}(X),M)\rightarrow\mathrm{Ext}^1_{R}(\mathrm{H}^{s}(X),M)\\
&\rightarrow\mathrm{Ext}^3_{R}(\mathrm{C}^{s+1}(X),M)\rightarrow
 \mathrm{Ext}^2_{R}(\mathrm{C}^{s}(X),M)\rightarrow\mathrm{Ext}^2_{R}(\mathrm{H}^{s}(X),M)\rightarrow\cdots,
\end{split}
\end{equation}

\vspace{0.01mm}
\begin{equation}
\begin{split}
0&\rightarrow\mathrm{Ext}^1_{R}(L,\mathrm{Z}^{s-1}(X))\rightarrow\mathrm{H}^{s}(\mathrm{Hom}_{R}(L,X))
 \rightarrow\mathrm{Hom}_{R}(L,\mathrm{H}^{s}(X))\\
&\rightarrow\mathrm{Ext}^2_{R}(L,\mathrm{Z}^{s-1}(X))
 \rightarrow\mathrm{Ext}^1_{R}(L,\mathrm{Z}^{s}(X))\rightarrow\mathrm{Ext}^1_{R}(L,\mathrm{H}^{s}(X))\\
&\rightarrow\mathrm{Ext}^3_{R}(L,\mathrm{Z}^{s-1}(X))\rightarrow
 \mathrm{Ext}^2_{R}(L,\mathrm{Z}^{s}(X))\rightarrow\mathrm{Ext}^2_{R}(L,\mathrm{H}^{s}(X))\rightarrow\cdots.
\end{split}
\end{equation}
 \end{lem}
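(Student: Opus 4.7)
The plan is to compute $\mathrm{H}^{-s}(\mathrm{Hom}_R(X, M))$ directly from the Hom-complex and then splice the result with long exact Ext sequences arising from three short exact sequences of modules attached to $X$. The relevant short exact sequences are
$$0\to \mathrm{B}^{s+1}(X)\to X^{s+1}\to \mathrm{C}^{s+1}(X)\to 0,\quad 0\to \mathrm{H}^s(X)\to \mathrm{C}^s(X)\to \mathrm{B}^{s+1}(X)\to 0,$$
together with $0\to \mathrm{B}^s(X)\to \mathrm{Z}^s(X)\to \mathrm{H}^s(X)\to 0$ (and their analogues with $s$ shifted).

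For (1.1), I would first identify the cocycles and coboundaries of $\mathrm{Hom}_R(X, M)$ in degree $-s$. A map $f: X^s \to M$ is a cocycle iff $f\circ d_X^{s-1}=0$, i.e., iff $f$ factors through $\mathrm{C}^s(X)$; hence $\mathrm{Z}^{-s}(\mathrm{Hom}_R(X,M))=\mathrm{Hom}_R(\mathrm{C}^s(X),M)$. Coboundaries are precisely maps of the form $g\circ d_X^s$ with $g:X^{s+1}\to M$, and factoring $d_X^s$ as $X^s\twoheadrightarrow \mathrm{C}^s(X)\xrightarrow{\bar{d}} \mathrm{B}^{s+1}(X)\hookrightarrow X^{s+1}$ shows that they equal the image of the composite $\mathrm{Hom}_R(X^{s+1},M)\xrightarrow{\rho}\mathrm{Hom}_R(\mathrm{B}^{s+1}(X),M)\xrightarrow{\bar{d}^{*}}\mathrm{Hom}_R(\mathrm{C}^s(X),M)$. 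Applying $\mathrm{Hom}_R(-,M)$ to the three displayed sequences and using the hypothesis $\mathrm{Ext}^{i\geq 1}_R(X^j,M)=0$ then gives: (a) $\bar{d}^{*}$ is injective with cokernel $\mathrm{ker}(\mathrm{Hom}_R(\mathrm{H}^s(X),M)\to \mathrm{Ext}^1_R(\mathrm{B}^{s+1}(X),M))$; (b) $\mathrm{Hom}_R(\mathrm{B}^{s+1}(X),M)/\mathrm{im}\,\rho\cong \mathrm{Ext}^1_R(\mathrm{C}^{s+1}(X),M)$; and (c) the dimension shift $\mathrm{Ext}^i_R(\mathrm{B}^{s+1}(X),M)\cong \mathrm{Ext}^{i+1}_R(\mathrm{C}^{s+1}(X),M)$ for all $i\geq 1$. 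Combining (a) and (b) yields the short exact sequence
$$0\to \mathrm{Ext}^1_R(\mathrm{C}^{s+1}(X),M)\to \mathrm{H}^{-s}(\mathrm{Hom}_R(X,M))\to \mathrm{ker}(\mathrm{Hom}_R(\mathrm{H}^s(X),M)\to \mathrm{Ext}^1_R(\mathrm{B}^{s+1}(X),M))\to 0,$$
which splices onto the long exact sequence obtained from $0\to \mathrm{H}^s(X)\to \mathrm{C}^s(X)\to \mathrm{B}^{s+1}(X)\to 0$ under $\mathrm{Hom}_R(-,M)$; rewriting every $\mathrm{Ext}^i_R(\mathrm{B}^{s+1}(X),M)$ via (c) delivers (1.1).

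The sequence (1.2) follows by the dual strategy applied to $\mathrm{Hom}_R(L,X)$: one identifies $\mathrm{Z}^s(\mathrm{Hom}_R(L,X))=\mathrm{Hom}_R(L,\mathrm{Z}^s(X))$ and the coboundaries as the image of $\mathrm{Hom}_R(L,X^{s-1})\to \mathrm{Hom}_R(L,\mathrm{Z}^s(X))$, and then uses the short exact sequences $0\to \mathrm{Z}^{s-1}(X)\to X^{s-1}\to \mathrm{B}^s(X)\to 0$ and $0\to \mathrm{B}^s(X)\to \mathrm{Z}^s(X)\to \mathrm{H}^s(X)\to 0$ together with $\mathrm{Ext}^{i\geq 1}_R(L,X^j)=0$ to carry out the analogous splicing. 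I expect the main obstacle to be the careful bookkeeping required to check that the connecting morphisms appearing in the various long exact sequences line up correctly so that the spliced result is exact at every term; once the cocycle/coboundary identification is pinned down, the remainder is a routine diagram chase.
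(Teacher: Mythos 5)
Your proposal is correct. The paper does not actually prove this lemma; it is cited verbatim from Otake's paper \cite[Theorem 3.1]{O}, so there is no internal argument to compare against. That said, your reconstruction is exactly the standard dimension-shifting and splicing argument, and it is essentially the one that appears in the cited reference: identify $\mathrm{Z}^{-s}(\mathrm{Hom}_R(X,M))=\mathrm{Hom}_R(\mathrm{C}^s(X),M)$, realize the coboundaries via the factorization $d_X^s\colon X^s\twoheadrightarrow\mathrm{C}^s(X)\twoheadrightarrow\mathrm{B}^{s+1}(X)\hookrightarrow X^{s+1}$, apply $\mathrm{Hom}_R(-,M)$ to $0\to\mathrm{B}^{s+1}(X)\to X^{s+1}\to\mathrm{C}^{s+1}(X)\to0$ (using $\mathrm{Ext}^{i\geq1}_R(X^{s+1},M)=0$ to get the four-term exact sequence and the dimension shift $\mathrm{Ext}^i_R(\mathrm{B}^{s+1}(X),M)\cong\mathrm{Ext}^{i+1}_R(\mathrm{C}^{s+1}(X),M)$ for $i\geq1$), and splice the resulting short exact sequence describing $\mathrm{H}^{-s}(\mathrm{Hom}_R(X,M))$ onto the long exact sequence from $0\to\mathrm{H}^s(X)\to\mathrm{C}^s(X)\to\mathrm{B}^{s+1}(X)\to0$. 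The bookkeeping you flag as a potential concern does go through: the map $\mathrm{H}^{-s}(\mathrm{Hom}_R(X,M))\to\mathrm{Hom}_R(\mathrm{H}^s(X),M)$ has image precisely the kernel of the connecting map $\mathrm{Hom}_R(\mathrm{H}^s(X),M)\to\mathrm{Ext}^1_R(\mathrm{B}^{s+1}(X),M)$, so the spliced sequence is exact there, and rewriting $\mathrm{Ext}^i_R(\mathrm{B}^{s+1}(X),M)$ via the dimension shift is a pure substitution of isomorphic terms. The dual argument for (1.2) using $0\to\mathrm{Z}^{s-1}(X)\to X^{s-1}\to\mathrm{B}^s(X)\to0$ and $0\to\mathrm{B}^s(X)\to\mathrm{Z}^s(X)\to\mathrm{H}^s(X)\to0$ works the same way. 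So the proposal is both correct and the natural route to the result.
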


 \begin{lem}\label{lem0.0} {\rm (\cite[Lemma 5.2]{E}).} Given a short exact sequence $0\rightarrow X\rightarrow Y\rightarrow Z\rightarrow 0$ in $\mathrm{C}(R)$, the following statements are equivalent:

$(1)$ $0\rightarrow \mathrm{Z}^s(X)\rightarrow \mathrm{Z}^s(Y)\rightarrow \mathrm{Z}^s(Z)\rightarrow 0$ is exact for all $s\in\mathbb{Z}$;

$(2)$ $0\rightarrow \mathrm{C}^s(X)\rightarrow \mathrm{C}^s(Y)\rightarrow \mathrm{C}^s(Z)\rightarrow 0$ is exact for all $s\in\mathbb{Z}$;

$(3)$ $0\rightarrow \mathrm{H}^s(X)\rightarrow \mathrm{H}^s(Y)\rightarrow \mathrm{H}^s(Z)\rightarrow 0$ is exact for all $s\in\mathbb{Z}$.
\end{lem}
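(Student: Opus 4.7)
The plan is to derive all three equivalences from a single application of the snake lemma. For each $s\in\zz$, I would take the row-exact commutative diagram whose rows are the given short exact sequence in degrees $s$ and $s+1$ and whose columns are $d^s_X$, $d^s_Y$, $d^s_Z$. The snake lemma then produces the six-term exact sequence
\begin{equation*}
0\to\mathrm{Z}^s(X)\to\mathrm{Z}^s(Y)\to\mathrm{Z}^s(Z)\xra{\partial_s}\mathrm{C}^{s+1}(X)\to\mathrm{C}^{s+1}(Y)\to\mathrm{C}^{s+1}(Z)\to 0.
\end{equation*}
Reading the left half, condition (1) in degree $s$ is equivalent to surjectivity of $\mathrm{Z}^s(Y)\to\mathrm{Z}^s(Z)$, i.e.\ to $\partial_s=0$; reading the right half, the same vanishing $\partial_s=0$ is equivalent to condition (2) in degree $s+1$. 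Quantifying over $s$ yields (1) $\Leftrightarrow$ (2) at once.

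For the link with (3), I would identify $\partial_s$ with the connecting morphism of the long exact cohomology sequence. A direct diagram chase (lift $z\in\mathrm{Z}^s(Z)$ to $y\in Y^s$, pull the image $d^s_Y(y)$ back through the injection $X^{s+1}\hookrightarrow Y^{s+1}$ to obtain $\tilde x\in X^{s+1}$, and exploit $d^{s+1}_Y d^s_Y=0$ together with injectivity of $X^{s+2}\hookrightarrow Y^{s+2}$ to conclude $\tilde x\in\mathrm{Z}^{s+1}(X)$; for a coboundary $z=d^{s-1}_Z(z')$, lift $z'$ to $y'$ and take $y=d^{s-1}_Y(y')$ to get $\tilde x=0$) shows that $\partial_s$ factors as
\begin{equation*}
\mathrm{Z}^s(Z)\onto\mathrm{H}^s(Z)\xra{\delta^s}\mathrm{H}^{s+1}(X)\hookrightarrow\mathrm{C}^{s+1}(X),
\end{equation*}
where $\delta^s$ is the usual cohomological connecting map for $0\to X\to Y\to Z\to 0$. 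Since the bordering maps are surjective and injective respectively, $\partial_s=0$ for all $s$ is equivalent to $\delta^s=0$ for all $s$, and by exactness of
\begin{equation*}
\cdots\to\mathrm{H}^s(X)\to\mathrm{H}^s(Y)\to\mathrm{H}^s(Z)\xra{\delta^s}\mathrm{H}^{s+1}(X)\to\cdots
\end{equation*}
this is in turn equivalent to (3).

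The only delicate point is the identification of the snake connecting map $\partial_s$ with the cohomological $\delta^s$, together with verifying that its image lands in $\mathrm{H}^{s+1}(X)\subseteq\mathrm{C}^{s+1}(X)$; both amount to routine diagram chases and should not present a real obstacle.
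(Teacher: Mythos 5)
Your argument is correct and self-contained. The paper itself offers no proof of this lemma: it simply cites Enochs \cite[Lemma 5.2]{E}, so there is no in-text argument to compare against. Your snake-lemma derivation is the standard one, and it is organized efficiently: identifying $\ker d^s_\bullet=\mathrm{Z}^s(\bullet)$ and $\coker d^s_\bullet=\mathrm{C}^{s+1}(\bullet)$ turns the snake sequence directly into the object you need, the automatic left-exactness of $\mathrm{Z}^s(-)$ and right-exactness of $\mathrm{C}^{s+1}(-)$ reduce conditions (1) and (2) to the single vanishing $\partial_s=0$, and the factorization $\partial_s=(\text{incl})\circ\delta^s\circ(\text{proj})$ through cohomology correctly reduces (3) to the same vanishing via the long exact cohomology sequence. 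The one point you flag as delicate, that $\partial_s$ lands in the subobject $\mathrm{H}^{s+1}(X)\subseteq\mathrm{C}^{s+1}(X)$ and agrees with $\delta^s$, is handled by the chase you sketch (the key being $d^{s+1}_Y d^s_Y=0$ plus injectivity of $X^{s+2}\hookrightarrow Y^{s+2}$), so there is no real gap. If anything, a clean way to package the same observation is to note that $\partial_s$ is precisely the degree-$s$ component of the connecting morphism in the long exact sequence of the complexes $\mathrm{Z}(-)\to(-)\to\mathrm{C}(-)[1]$, but your explicit chase is equally acceptable.
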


An exact sequence $0\rightarrow X\rightarrow Y\rightarrow Z\rightarrow 0$ in $\mathrm{C}(R)$ is called \emph{CE exact} if the above equivalent conditions hold.  Given a class $\textsf{X}$ of $R$-modules, a complex $X$ is called a \emph{CE $\textsf{X}$
complex} if $X$, $\mathrm{Z}(X)$, $\mathrm{B}(X)$ and $\mathrm{H}(X)$ are all in $\mathrm{C}(\textsf{X})$.
In particular, if $\textsf{X}$ is the class $\textsf{P}$ (resp. $\textsf{I}$, $\textsf{F}$) of
projective (resp. injective, flat) $R$-modules, then a CE $\textsf{X}$ complex is just a CE projective (resp. injective, flat) complex.
By \cite[Proposition 6.3]{E}, the functor $\mathrm{Hom}(-,-)$ on
$\mathrm{C}(R)\times\mathrm{C}(R)$ is right balanced by $\textrm{CE-\textsf{P}}\times\textrm{CE-\textsf{I}}$. So one can compute derived functors of $\mathrm{Hom}(-,-)$ using either of the two resolutions, and denote these functors by $\overline{\mathrm{Ext}}^n(-,-)$.

 \begin{lem}\label{lem0.0'} {\rm (\cite{E,EFHO}).}
$(1)$ An $R$-complex $P$ is CE projective if and only if $P=P'\oplus P''$ where $P'$ is a projective
complex and $P''=\bigoplus_{n\in\mathbb{Z}}P^n[-n]$ for $P^n\in\textsf{P}$.

$(2)$ An $R$-complex $I$ is CE injective if and only if $I=I'\oplus I''$ where $I'$ is an injective
complex and $I''=\bigoplus_{n\in\mathbb{Z}}I^n[-n]$ for $I^n\in\textsf{I}$.
\end{lem}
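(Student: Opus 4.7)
Both statements assert the equivalence of an intrinsic homological property with a concrete structural decomposition. The strategy is the same for both parts; I focus on (1) and indicate that (2) is entirely dual.

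The \emph{if} direction is a direct verification. Projective objects in $\mathrm{C}(R)$ are exactly the contractible complexes of projective modules, so if $P'$ is a projective complex then $P'$ is split exact with projective components, which makes $\mathrm{Z}^n(P')$ and $\mathrm{B}^n(P')$ projective and $\mathrm{H}^n(P')=0$. For $P''=\bigoplus_{n}P^n[-n]$ with $P^n\in\textsf{P}$, all differentials are zero, so $\mathrm{Z}(P'')=P''=\mathrm{H}(P'')$ and $\mathrm{B}(P'')=0$. In either case $P'$ and $P''$ are CE projective, and the class of CE projective complexes is evidently closed under finite direct sums, so $P'\oplus P''$ is CE projective.

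For the \emph{only if} direction, suppose $P$ is CE projective, so $P^n$, $\mathrm{B}^n(P)$, $\mathrm{Z}^n(P)$ and $\mathrm{H}^n(P)$ are all projective. Because $\mathrm{B}^{n+1}(P)$ is projective, the short exact sequence $0\to \mathrm{Z}^n(P)\to P^n\xrightarrow{d^n_P}\mathrm{B}^{n+1}(P)\to 0$ admits a section $s^n$. Because $\mathrm{H}^n(P)$ is projective, the short exact sequence $0\to \mathrm{B}^n(P)\to \mathrm{Z}^n(P)\to \mathrm{H}^n(P)\to 0$ splits; pick a submodule $K^n\subseteq \mathrm{Z}^n(P)$ mapping isomorphically onto $\mathrm{H}^n(P)$. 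Then $P^n=\mathrm{B}^n(P)\oplus K^n\oplus s^n(\mathrm{B}^{n+1}(P))$ as $R$-modules, with $d^n_P$ zero on the first two summands and equal to $(s^n)^{-1}$ followed by the inclusion $\mathrm{B}^{n+1}(P)\hookrightarrow P^{n+1}$ on the third. Set $P''=\bigoplus_{n\in\mathbb{Z}}K^n[-n]\cong \bigoplus_{n\in\mathbb{Z}}\mathrm{H}^n(P)[-n]$, and let $P'$ be the graded submodule of $P$ with $(P')^n=\mathrm{B}^n(P)\oplus s^n(\mathrm{B}^{n+1}(P))$ endowed with the inherited differential. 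A degree-wise inspection yields $P=P'\oplus P''$ as complexes, and $P'$ decomposes as $\bigoplus_{n\in\mathbb{Z}} D_n$, where $D_n$ is the disk complex consisting of $\mathrm{B}^{n+1}(P)\xrightarrow{\mathrm{id}}\mathrm{B}^{n+1}(P)$ in degrees $n,n+1$, hence $P'$ is a projective complex.

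The subtle step, and the only real obstacle, is the coherence of the splittings across degrees: one must ensure that the chosen decomposition of each $P^n$ is actually compatible with $d^n_P$, so that $P'$ and $P''$ are genuine subcomplexes. The device is to let the single section $s^n$ at degree $n$ simultaneously pick out the third summand of $P^n$ and, via its image under $d^n_P$, the first summand of $P^{n+1}$; this is exactly what makes $P'$ the direct sum of disks described above, and it is where the projectivity of the $\mathrm{B}^{n+1}(P)$ is used essentially rather than cosmetically. Part (2) is proved by the dual construction: use injectivity of $\mathrm{Z}^n(I)$ (itself an extension of the injectives $\mathrm{B}^n(I)$ and $\mathrm{H}^n(I)$) to split $0\to \mathrm{Z}^n(I)\to I^n\to \mathrm{B}^{n+1}(I)\to 0$, and assemble the resulting decomposition into $I=I'\oplus I''$ with $I'$ a direct sum of injective disk complexes and $I''=\bigoplus_n \mathrm{H}^n(I)[-n]$.
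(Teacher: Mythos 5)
The paper quotes this lemma from \cite{E,EFHO} without giving a proof, so there is no in-text argument to compare against; what you have written is a correct, self-contained proof that matches the standard one (essentially Enochs' Theorem 4.2 in \cite{E}). Your decomposition $P^n=\mathrm{B}^n(P)\oplus K^n\oplus s^n(\mathrm{B}^{n+1}(P))$, obtained by first splitting $P^n\twoheadrightarrow\mathrm{B}^{n+1}(P)$ via projectivity of $\mathrm{B}^{n+1}(P)$ and then splitting $\mathrm{Z}^n(P)\twoheadrightarrow\mathrm{H}^n(P)$ via projectivity of $\mathrm{H}^n(P)$, is exactly the right device, and you correctly identify the coherence issue: the section $s^n$ selects the degree-$n$ part of a disk whose degree-$(n+1)$ part is the boundary $\mathrm{B}^{n+1}(P)$ appearing in the decomposition of $P^{n+1}$, so the splittings assemble into a direct sum decomposition of complexes $P=P'\oplus P''$ with $P'=\bigoplus_n D_n$ a sum of projective disks and $P''\cong\bigoplus_n\mathrm{H}^n(P)[-n]$. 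The converse and part (2) are handled correctly. One small point worth making explicit: that $P'$ is a projective complex rests on the fact that an arbitrary direct sum of contractible complexes of projectives is again contractible with projective components; this is true and easy, but deserves a sentence rather than being left implicit.
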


\vspace{2mm}
{\bf Triangulated categories.} Let $\mathcal{T}=(\mathcal{T},[1],\triangle)$ be a triangulated category, where $\mathcal{T}$ is an additive category, $[1]$
is an autoequivalence of $\mathcal{T}$. The elements of $\triangle$ are diagrams in $\mathcal{T}$ of the form $X\rightarrow Y\rightarrow Z\rightarrow X[1]$, called the \emph{exact triangles}, satisfying axioms (Tr1)-(Tr4) of \cite[Definitions 1.1.1 and 1.3.13]{N}.
Let $\xi\subseteq\triangle$ be a proper class of triangles (detail see \cite{B}). Following \cite{B},
an object $P$ in $\mathcal{T}$ is called \emph{$\xi$-projective} (resp. \emph{$\xi$-injective})
if for any triangle $X\rightarrow Y\rightarrow Z\rightarrow X[1]$ in $\xi$, the induced sequence$$0\rightarrow\mathrm{Hom}_{\mathcal{T}}(P,X)\rightarrow\mathrm{Hom}_{\mathcal{T}}(P,Y)
\rightarrow\mathrm{Hom}_{\mathcal{T}}(P,Z)\rightarrow0$$$$(\textrm{resp.}\ 0\rightarrow\mathrm{Hom}_{\mathcal{T}}(Z,P)\rightarrow\mathrm{Hom}_{\mathcal{T}}(Y,P)
\rightarrow\mathrm{Hom}_{\mathcal{T}}(X,P)\rightarrow0)$$ is exact in the category Ab of abelian groups. The symbol $\mathcal{P}(\xi)$ (resp. $\mathcal{I}(\xi)$) denote the full subcategory of $\xi$-projective
(resp. $\xi$-injective) objects in $\mathcal{T}$. The category $\mathcal{T}$ is said to \emph{have
enough $\xi$-projectives} (resp. \emph{$\xi$-injectives}) if for any $X\in\mathcal{T}$, there is a triangle
$$K\rightarrow P\rightarrow X\rightarrow K[1]\ (\textrm{resp.}\ X\rightarrow I\rightarrow C\rightarrow X[1])$$ in $\xi$ with $P\in\mathcal{P}(\xi)$ (resp. $I\in\mathcal{I}(\xi)$).

Let $\mathcal{X}$ be a full additive subcategory of $\mathcal{T}$, a morphism $f$ in $\mathcal{T}$ is called an \emph{$\mathcal{X}$-ghost} (resp. \emph{$\mathcal{X}$-coghost}) if $\mathrm{Hom}_{\mathcal{T}}(X,f)=0$ (resp. $\mathrm{Hom}_{\mathcal{T}}(f,X)=0$) for any $X\in\mathcal{X}$ (see \cite{CW}). Let $C$ be an object of $\mathcal{T}$. For any $n\neq0$, the \emph{$\xi$-extension functor} $\xi xt^{n}_\xi(-,C)$ is
defined to be the $n$th right $\xi$-derived functor of the functor $\mathrm{Hom}_\mathcal{T}(-,C)$, that is
$$\xi xt^{n}_\xi(-,C):=\mathcal{R}^n_\xi\mathrm{Hom}_\mathcal{T}(-,C).$$

\vspace{2mm}
{\bf Cotorsion pairs.}  A pair of classes $(\mathcal{A},\mathcal{B})$ in a triangulated category $\mathcal{T}$ is called
a \emph{cotorsion pair with respect to $\xi$} if
$$\mathcal{A}^{\perp_\xi}:=\{U\in\mathcal{T}\hspace{0.03cm}|\hspace{0.03cm}\xi xt^{1}_\xi(A,U)=0\ \textrm{for\ all}\ A\in\mathcal{A}\}=\mathcal{B},$$ $${^{\perp_\xi}}\mathcal{B}:=\{V\in\mathcal{T}\hspace{0.03cm}|\hspace{0.03cm}\xi xt^{1}_\xi(V,B))=0\ \textrm{for\ all}\ B\in\mathcal{B}\}=\mathcal{A}.$$
A cotorsion pair $(\mathcal{A},\mathcal{B})$ with respect to $\xi$ is called \emph{complete} if for $X\in\mathcal{T}$ there are triangles $B\rightarrow A\rightarrow X\rightarrow B[1]$ and $X\rightarrow B'\rightarrow A'\rightarrow X[1]$
in $\xi$ with $A,A'\in\mathcal{A}$ and $B,B'\in\mathcal{B}$. The cotorsion pair $(\mathcal{A},\mathcal{B})$ is called \emph{hereditary} if for any triangles $A_1\rightarrow A_2\rightarrow A_3\rightarrow A_1[1]$ and  $B_1\rightarrow B_2\rightarrow B_3\rightarrow B_1[1]$ in $\xi$ with $A_2,A_3\in\mathcal{A}$ and $B_1,B_2\in\mathcal{B}$, one has $A_1\in\mathcal{A}$ and $B_3\in\mathcal{B}$.

 The category of (left) $R$-modules is denoted by $\textrm{Mod}R$. A \emph{cotorsion pair} in $\textrm{Mod}R$ is a pair $(\textsf{A},\textsf{B})$ of $R$-modules
 such that
$$\textsf{A}^\perp:=\{M\in\textrm{Mod}R\hspace{0.03cm}|\hspace{0.03cm}\textrm{Ext}^1_R(A,M)=0\ \textrm{for\ all}\ A\in\textsf{A}\}=\textsf{B},$$ $${^\perp}\textsf{B}:=\{N\in\textrm{Mod}R\hspace{0.03cm}|\hspace{0.03cm}\textrm{Ext}^1_R(N,B)=0\ \textrm{for\ all}\ B\in\textsf{B}\}=\textsf{A}.$$
A cotorsion pair $(\textsf{A},\textsf{B})$ is called \emph{complete} if
for $M\in\textrm{Mod}R$, there are exact
sequences $0\rightarrow B\rightarrow A\rightarrow M\rightarrow 0$ and $0\rightarrow M\rightarrow B'\rightarrow A'\rightarrow 0$ with $A,A'\in\textsf{A}$ and $B,B'\in\textsf{B}$.
 A cotorsion pair $(\textsf{A},\textsf{B})$ is called \emph{hereditary} if $\textsf{A}$
is closed under taking kernels of
epimorphisms between objects in $\textsf{A}$ and $\textsf{B}$ is closed under taking cokernels of monomorphisms
between objects in $\textsf{B}$.

Let
$(\textsf{A},\textsf{B})$ be a cotorsion pair in $\textrm{Mod}R$. Following \cite{G}, a complex
 $X$ is called an \emph{$\textsf{A}$ complex} if it is exact and each
$\mathrm{Z}^n(X)\in\textsf{A}$. The definition of \emph{$\textsf{B}$ complex} is similar. These classes of complexes are denoted by $\widetilde{\textsf{A}}$ and $\widetilde{\textsf{B}}$, respectively. A complex $X$ is called a \emph{$dg\textsf{A}$ complex} if each $X^n\in\textsf{A}$, and $\mathrm{Hom}_R(X,B)$ is exact whenever $B\in\widetilde{\textsf{B}}$. The definition of \emph{$dg\textsf{B}$ complex} is dual. These classes of complexes are denoted by $dg\widetilde{\textsf{A}}$ and $dg\widetilde{\textsf{B}}$, respectively.
 By \cite[Theorem 3.5]{YL11} or \cite[Theorem 2.5]{YD}, if the cotorsion pair $(\textsf{A},\textsf{B})$ in $\textrm{Mod}R$ is complete and hereditary then the induced cotorsion pairs $(dg\widetilde{\textsf{A}},\widetilde{\textsf{B}})$ and $(\widetilde{\textsf{A}},dg\widetilde{\textsf{B}})$ in $\mathrm{C}(R)$ are complete and hereditary.

\begin{rem}\label{lem:2.2}{\rm (1) For a class $\textsf{X}$ of $R$-modules, let $\langle\textsf{X}\rangle_1$ be the collection of
all retracts of coproducts of shifts of elements in $\textsf{X}$. There is a proper class $\xi(\langle\textsf{P}\rangle_1)$ of triangles in $\mathrm{D}(R)$ as follows:
A triangle $X\rightarrow Y\rightarrow Z\rightarrow X[1]$ is in $\xi(\langle\textsf{P}\rangle_1)$ if and only if
 the sequence $0\rightarrow\mathrm{Hom}_{\mathrm{D}(R)}(P,X)\rightarrow\mathrm{Hom}_{\mathrm{D}(R)}(P,Y)\rightarrow\mathrm{Hom}_{\mathrm{D}(R)}(P,Z)\rightarrow0$ is exact for all $P\in\langle\textsf{P}\rangle_1$. \textbf{In what follows, $\xi$ always denote the proper class $\xi(\langle\textsf{P}\rangle_1)$}.
 The class $\mathcal{P}(\xi)$ based on $\xi$ is exactly $\langle\textsf{P}\rangle_1$ and  the class $\mathcal{I}(\xi)$ based on $\xi$ is exactly $\langle\textsf{I}\rangle_1$.

(2) A $\langle\textsf{P}\rangle_1$-ghost or $\langle\textsf{I}\rangle_1$-coghost morphism $f$ is called a \emph{ghost} in \cite{HL} or \cite{EFHO}. Equivalently, $\mathrm{H}^s(f)=0$ for all $s\in\mathbb{Z}$. A triangle $X\rightarrow Y\rightarrow Z\rightarrow X[1]$ in $\xi$ is called a \emph{cohomologically ghost triangle} in \cite{O}. Equivalently, $0\rightarrow\mathrm{H}^s(X)\rightarrow\mathrm{H}^s(Y)\rightarrow\mathrm{H}^s(Z)\rightarrow0$ is exact for all $s\in\mathbb{Z}$.

(3) Following Hovey and Lockridge \cite{HL11}, a complex $X$ is called \emph{$\xi$-flat} if every ghost $f$ with domain $X$ is \emph{phantom}, in the sense that
$\mathrm{Hom}_{\mathrm{D}(R)}(T,f)=0$ for all compact objects $T$ in $\mathrm{D}(R)$. Equivalently, $X\in\langle\textsf{F}\rangle_1$ by \cite[Proposition 2.3]{HL11}.

(4) For any complex $X$,
there exists two CE exact sequences $0\rightarrow K\rightarrow P\rightarrow X\rightarrow 0$ and $0\rightarrow X\rightarrow I\rightarrow C\rightarrow 0$ in $\mathrm{C}(R)$ with $P\in\textrm{CE-\textsf{P}}$ and $I\in\textrm{CE-\textsf{I}}$, the induced triangles $$K\rightarrow P\rightarrow X\rightarrow K[1],\ X\rightarrow I\rightarrow C\rightarrow X[1]$$ in $\mathrm{D}(R)$ are cohomologically ghost triangles with $P\in\langle\textsf{P}\rangle_1$ and $I\in\langle\textsf{I}\rangle_1$.}
\end{rem}

\section{\bf Induced complete hereditary cotorsion pairs in $\mathrm{D}(R)$}

This section is devoted to proving a proof of Theorem A stated in introduction. We begin with the following several useful lemmas.

\begin{lem}\label{lem2.3} Let $X$ and $Y$ be two complexes. The following are equivalent:

$(1)$ $\overline{\mathrm{Ext}}^1(P,Y)=0$, where $P\stackrel{\simeq}\rightarrow X$ is a semi-projective resolution;

$(2)$ $\xi xt^{1}_\xi(X,Y)=0$;

$(3)$ $\overline{\mathrm{Ext}}^1(X,I)=0$, where $Y\stackrel{\simeq}\rightarrow I$ is a semi-injective resolution;
\end{lem}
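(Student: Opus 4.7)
The plan is to prove $(1)\Leftrightarrow(2)$ and $(2)\Leftrightarrow(3)$, the latter being entirely dual to the former. Throughout, I use that every CE-exact short exact sequence in $\mathrm{C}(R)$ induces a cohomologically ghost triangle in $\mathrm{D}(R)$ (Remark \ref{lem:2.2}); that a semi-projective $P$ (resp.\ semi-injective $I$) satisfies $\mathrm{Hom}_{\mathrm{D}(R)}(P,-)=\mathrm{Hom}_{\mathrm{K}(R)}(P,-)$ and has projective components (resp.\ the dual statement); and, at the level of Yoneda extensions, that $\overline{\mathrm{Ext}}^{1}(A,B)$ classifies CE-exact sequences $0\to B\to N\to A\to 0$ while $\xi xt^{1}_{\xi}(X,Y)$ classifies cohomologically ghost triangles $Y\to N\to X\to Y[1]$, both up to equivalence.

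For $(1)\Rightarrow(2)$, I take a cohomologically ghost triangle $Y\to N\to X\to Y[1]$ and rewrite it as $Y\to N\to P\to Y[1]$ via $X\simeq P$; semi-projectivity lifts the classifying morphism $P\to Y[1]$ to an actual chain map $\alpha$, and the standard mapping-cone construction produces a short exact sequence $0\to Y\to N''\to P\to 0$ whose connecting homomorphism in cohomology is $\mathrm{H}^{s}(\alpha)$. This vanishes precisely because the triangle is cohomologically ghost, so by Lemma \ref{lem0.0} the sequence is CE-exact; hypothesis (1) then supplies a chain-map section $P\to N''$ splitting the triangle in $\mathrm{D}(R)$. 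Conversely, for $(2)\Rightarrow(1)$, a CE-exact sequence $0\to Y\to N\to P\to 0$ induces a cohomologically ghost triangle that splits in $\mathrm{D}(R)$ by (2); semi-projectivity lifts the resulting section to a chain map $\tilde{s}\colon P\to N$ satisfying $p\tilde{s}-1_{P}=dh+hd$ for some degree $-1$ homotopy $h\colon P\to P$. Since each $P^{n}$ is projective and $p^{n-1}\colon N^{n-1}\to P^{n-1}$ is surjective, $h$ lifts degree-wise to $\tilde{h}\colon P\to N$ with $p\tilde{h}=h$, and then $\tilde{s}-[d,\tilde{h}]$ is a strict chain-map section of $p$, splitting the CE-exact sequence.

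The equivalence $(2)\Leftrightarrow(3)$ is handled by the dual argument: work with CE-exact sequences of the form $0\to I\to N\to X\to 0$, use chain-map retractions $N\to I$ in place of sections $P\to N$, and use injectivity of each $I^{n}$ to extend homotopies along the degree-wise injection $I\hookrightarrow N$. The main technical point I expect is the chain-homotopy-to-strict upgrade in $(2)\Rightarrow(1)$ and its dual, which depends crucially on degree-wise projectivity (resp.\ injectivity) of semi-projective (resp.\ semi-injective) complexes; everything else is bookkeeping around the mapping-cone construction and the correspondence (Lemma \ref{lem0.0}, Remark \ref{lem:2.2}) between CE-exact sequences of complexes and cohomologically ghost triangles in $\mathrm{D}(R)$.
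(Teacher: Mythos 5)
Your proposal is correct, but it takes a genuinely different route from the paper's. The paper fixes a single CE-injective coresolution step $0\to Y\to E\to L\to 0$ and builds a three-row commutative diagram comparing the computation of $\overline{\mathrm{Ext}}^{1}(P,-)$ via $\mathrm{Hom}_{\mathrm{C}(R)}(P,-)$ with that of $\xi xt^{1}_{\xi}(P,-)$ via $\mathrm{Hom}_{\mathrm{D}(R)}(P,-)\cong\mathrm{Hom}_{\mathrm{K}(R)}(P,-)$: the canonical surjections $\mathrm{Hom}_{\mathrm{C}(R)}(P,Z)\twoheadrightarrow\mathrm{Hom}_{\mathrm{K}(R)}(P,Z)$ force a surjection $\gamma\colon\overline{\mathrm{Ext}}^{1}(P,Y)\to\xi xt^{1}_{\xi}(P,Y)$, and the vanishing of $\overline{\mathrm{Ext}}^{1}(P,\mathrm{cn}(\mathrm{id}_{Y[-1]}))$ (a contractible target, via Gillespie's Lemma 2.1) shows $\gamma$ is injective, so the two groups are naturally isomorphic and the equivalence of vanishing is immediate. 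You instead work directly with Yoneda extensions and ghost triangles, exhibiting a correspondence between CE-exact sequences $0\to Y\to N\to P\to 0$ and cohomologically ghost triangles $Y\to N\to X\to Y[1]$ and transferring splittings back and forth, including the homotopy-to-strict upgrade of a section of a degreewise split epimorphism onto a complex of projectives. The paper's approach is shorter and actually yields the isomorphism $\gamma$, not just equivalence of vanishing, once the right-balanced $\overline{\mathrm{Ext}}$-machinery of Enochs is available; yours is more explicit and self-contained, at the price of invoking (as you state) the Yoneda-extension interpretations of both $\overline{\mathrm{Ext}}^{1}$ and $\xi xt^{1}_{\xi}$. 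One small point to make explicit in your write-up: when you pass from a CE-exact sequence $0\to Y\to N\to P\to 0$ to a triangle with chain-level connecting morphism $P\to Y[1]$, you are implicitly using that the sequence is degreewise split --- this is guaranteed here because $P$ has projective components, but is not automatic for a general CE-exact short exact sequence.
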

\begin{proof} Consider the CE exact sequence $0\rightarrow Y\rightarrow E\rightarrow L\rightarrow0$ in $\mathrm{C}(R)$ with $E\in\textrm{CE-\textsf{I}}$. Then $\overline{\mathrm{Ext}}^1(P,\mathrm{cn}(\mathrm{id}_{E[-1]}))=0=\overline{\mathrm{Ext}}^1(P,E)$ and $\xi xt^{1}_\xi(X,E)=0$ by \cite[Proposition 4.17]{B}. As $\mathrm{Hom}_{\mathrm{D}(R)}(P,-)\cong\mathrm{Hom}_{\mathrm{K}(R)}(P,-)$,  one has the following commutative diagram\begin{center} $\xymatrix@C=16pt@R=16pt{
\mathrm{Hom}_{\mathrm{C}(R)}(P,\mathrm{cn}(\mathrm{id}_{E[-1]}))\ar[d]\ar[r] &\mathrm{Hom}_{\mathrm{C}(R)}(P,\mathrm{cn}(\mathrm{id}_{L[-1]}))\ar[d]\ar[r]&\overline{\mathrm{Ext}}^1(P,\mathrm{cn}(\mathrm{id}_{Y[-1]})) \ar[d]\ar[r]&0\\
\mathrm{Hom}_{\mathrm{C}(R)}(P,E)\ar[d]^\alpha\ar[r] &\mathrm{Hom}_{\mathrm{C}(R)}(P,L)\ar[d]^\beta\ar[r]&\overline{\mathrm{Ext}}^1(P,Y) \ar[d]^\gamma\ar[r]&0\\
\mathrm{Hom}_{\mathrm{D}(R)}(P,E)\ar[d]\ar[r] &\mathrm{Hom}_{\mathrm{D}(R)}(P,L)\ar[d]\ar[r]&\xi xt^{1}_\xi(P,Y)\ar[r]&0\\ 0 &0 }$
\end{center}by \cite[Theorem 2.3.10]{CFH} and \cite[Corollary 4.12]{B}, it implies that $\gamma$ is surjective.

(1) $\Leftrightarrow$ (2) If $\overline{\mathrm{Ext}}^1(P,Y)=0$, then $\xi xt^{1}_\xi(X,Y)\cong\xi xt^{1}_\xi(P,Y)=0$. Conversely, it follows by \cite[Lemma 2.1]{G} that
$\overline{\mathrm{Ext}}^1(P,\mathrm{cn}(\mathrm{id}_{Y[-1]}))\subseteq\mathrm{Ext}^1_R(P,\mathrm{cn}(\mathrm{id}_{Y[-1]}))=0$, so $\overline{\mathrm{Ext}}^1(P,Y)=0$.

 (2) $\Leftrightarrow$ (3) The proof is similar to  (1) $\Leftrightarrow$ (2).
\end{proof}

\begin{lem}\label{lem2.1} Let $(\textsf{A},\textsf{B})$ be a hereditary cotorsion pair in $\textrm{Mod}R$ and $X\in\mathrm{D}(R)$. Then the following are equivalent:

$(1)$ $X\in\mathcal{A}_\textsf{P}$;

$(2)$ $\xi xt^{1}_\xi(X,Y)=0$ for  all $Y\in\mathrm{CE}\textrm{-}\textsf{B}$;

$(3)$ $\mathrm{H}^s(X)\in\textsf{A}$ and $\mathrm{H}^{-s}(\mathrm{RHom}_R(X,B))\cong\mathrm{Hom}_R(\mathrm{H}^s(X),B)$ for all $B\in\textsf{B}$ and $s\in\mathbb{Z}$;

$(4)$ $\mathrm{C}^s(P)\in\textsf{A}$ for any semi-projective resolution $P\stackrel{\simeq}\rightarrow X$  and all $s\in\mathbb{Z}$;

$(5)$ $\overline{\mathrm{Ext}}^1(P,Y)=0$ for all $Y\in\mathrm{CE}\textrm{-}\textsf{B}$, where $P\stackrel{\simeq}\rightarrow X$ is a semi-projective resolution.
\end{lem}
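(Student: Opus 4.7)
The plan is to establish the equivalence of all five conditions via four sub-equivalences: $(1)\Leftrightarrow(4)$, $(3)\Leftrightarrow(4)$, $(5)\Leftrightarrow(2)$, and $(4)\Leftrightarrow(5)$. Throughout, fix a semi-projective resolution $P\xrightarrow{\simeq}X$, so that $\mathrm{RHom}_R(X,-)=\mathrm{Hom}_R(P,-)$ on $\mathrm{Mod}R$.

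For $(1)\Leftrightarrow(4)$, any two semi-projective resolutions of $X$ are homotopy equivalent and hence become isomorphic after adding contractible bounded complexes of projectives; the cokernels $\mathrm{C}^s$ of such contractible pieces are projective and thus lie in $\textsf{A}={}^{\perp}\textsf{B}$. As $\textsf{A}$ is closed under direct sums and summands, the condition ``$\mathrm{C}^s(P)\in\textsf{A}$'' is independent of the chosen resolution. For $(3)\Leftrightarrow(4)$, apply Lemma~\ref{lem4.1}(1) to $P\in\mathrm{K}(\textsf{P})$ with $M=B\in\textsf{B}$. If (4) holds, the hereditary property of $(\textsf{A},\textsf{B})$ forces $\mathrm{Ext}^{i\ge 1}_R(\mathrm{C}^s(P),B)=0$, collapsing the long exact sequence to the isomorphism in (3) and giving $\mathrm{Ext}^{i\ge 1}_R(\mathrm{H}^s(X),B)=0$, i.e.\ $\mathrm{H}^s(X)\in\textsf{A}$. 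Conversely, the isomorphism in (3) forces the initial term $\mathrm{Ext}^1_R(\mathrm{C}^{s+1}(P),B)$ of the long exact sequence to vanish for every $B\in\textsf{B}$, hence $\mathrm{C}^{s+1}(P)\in{}^{\perp}\textsf{B}=\textsf{A}$. The equivalence $(5)\Leftrightarrow(2)$ is exactly Lemma~\ref{lem2.3}.

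For $(5)\Rightarrow(4)$, test (5) against stalks $B[-s]$ with $B\in\textsf{B}$: each stalk lies in $\mathrm{CE}\text{-}\textsf{B}$, and shifting a module injective resolution $B\to J^{\bullet}$ produces a CE-injective resolution $B[-s]\to J^{\bullet}[-s]$ by Lemma~\ref{lem0.0'}(2). Since $\mathrm{Hom}_{\mathrm{C}(R)}(P,J[-s])\cong\mathrm{Hom}_R(\mathrm{C}^s(P),J)$ (a chain map to a stalk factors through $\mathrm{C}^s$), this resolution computes $\overline{\mathrm{Ext}}^1(P,B[-s])\cong\mathrm{Ext}^1_R(\mathrm{C}^s(P),B)$; vanishing for all $B\in\textsf{B}$ places $\mathrm{C}^s(P)$ in ${}^{\perp}\textsf{B}=\textsf{A}$.

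The main obstacle is $(4)\Rightarrow(5)$. Given $Y\in\mathrm{CE}\text{-}\textsf{B}$, pick a CE-exact sequence $0\to Y\to E\to L\to 0$ with $E\in\mathrm{CE}\text{-}\textsf{I}$; since $\textsf{I}\subseteq\textsf{B}$ and $\textsf{B}$ is closed under cokernels of monics in $\textsf{B}$, Lemma~\ref{lem0.0} propagates $\mathrm{CE}\text{-}\textsf{B}$-membership to $L$, and $\overline{\mathrm{Ext}}^1(P,E)=0$ reduces the task to lifting every chain map $f\colon P\to L$ through $E\to L$. The key preparatory observation is that (4), combined with the hereditary property and dimension shift through the short exact sequences $0\to\mathrm{Z}^s(P)\to P^s\to\mathrm{B}^{s+1}(P)\to 0$, $0\to\mathrm{H}^s(P)\to\mathrm{C}^s(P)\to\mathrm{B}^{s+1}(P)\to 0$, and $0\to\mathrm{B}^s(P)\to\mathrm{Z}^s(P)\to\mathrm{H}^s(P)\to 0$, promotes $\mathrm{C}^s(P)\in\textsf{A}$ to $\mathrm{Z}^s(P),\mathrm{B}^s(P),\mathrm{H}^s(P)\in\textsf{A}$ as well. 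Consequently every Ext-pairing of these $\textsf{A}$-objects against the $\textsf{B}$-objects $Y^s,\mathrm{Z}^s(Y),\mathrm{B}^s(Y),\mathrm{H}^s(Y)$ vanishes. Because CE-exactness (via Lemma~\ref{lem0.0}) supplies SES on all of these pieces simultaneously, the obstructions to lifting $f$ degree by degree live in the vanishing Ext groups, and the required chain-level lift can be assembled inductively. Organizing this degreewise obstruction data consistently is the delicate bookkeeping step.
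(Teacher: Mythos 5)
Your overall decomposition is sound, and most of the individual steps are correct. The equivalence $(5)\Leftrightarrow(2)$ is indeed just Lemma~\ref{lem2.3}; the equivalence $(3)\Leftrightarrow(4)$ via Lemma~\ref{lem4.1}(1.1) together with the hereditary property is exactly the paper's argument; and your $(5)\Rightarrow(4)$ using stalks $B[-s]$ and the identification $\mathrm{Hom}_{\mathrm{C}(R)}(P,J[-s])\cong\mathrm{Hom}_R(\mathrm{C}^s(P),J)$ reconstructs what the paper quotes as \cite[Lemma 9.3]{E}. Your preparatory observation that $\mathrm{C}^s(P)\in\textsf{A}$ for all $s$ forces $\mathrm{Z}^s(P),\mathrm{B}^s(P),\mathrm{H}^s(P)\in\textsf{A}$ (so that $P\in\mathrm{CE}\textrm{-}\textsf{A}$) is also correct and is implicitly used in the paper.

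The genuine gap is $(4)\Rightarrow(5)$, which you rightly identify as the main obstacle and then do not actually prove. Reducing to the surjectivity of $\mathrm{Hom}_{\mathrm{C}(R)}(P,E)\to\mathrm{Hom}_{\mathrm{C}(R)}(P,L)$ is fine, but the claim that ``the required chain-level lift can be assembled inductively'' from the degreewise $\mathrm{Ext}$-vanishing is precisely the nontrivial content you are trying to establish, not a bookkeeping afterthought. Concretely: after pulling back $0\to Y\to E\to L\to0$ along $f\colon P\to L$ you obtain a CE-exact, degreewise split sequence $0\to Y\to W\to P\to0$, classified by a chain map $\theta\colon P\to Y[1]$ which CE-exactness forces to be a ghost ($\mathrm{H}^s(\theta)=0$). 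What you must show is that such a ghost $\theta$ is null-homotopic, and the degreewise splittings you can choose from $\mathrm{Ext}^1_R(\textsf{A},\textsf{B})=0$ at the level of $\mathrm{Z}^s$, $\mathrm{B}^s$, $\mathrm{C}^s$, $\mathrm{H}^s$ do not automatically assemble into a homotopy $h$ with $\theta^s=h^{s+1}d_P^s-d_Y^sh^s$; the compatibility across degrees is exactly where an honest argument is needed. The paper sidesteps this entirely by citing Enochs \cite[Theorem 9.4]{E}, which is the precise statement that $\overline{\mathrm{Ext}}^1(P,Y)=0$ once $P\in\mathrm{CE}\textrm{-}\textsf{A}$ and $Y\in\mathrm{CE}\textrm{-}\textsf{B}$. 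Either invoke that theorem (as the paper does, routing $(1)\Rightarrow(2)$ through it), or carry out the inductive construction of $h$ in full; as written, the proposal leaves this step open and therefore does not yet constitute a complete proof.

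One smaller remark: your $(1)\Leftrightarrow(4)$ via resolution independence (adding contractible complexes of projectives, whose $\mathrm{C}^s$ are projective hence in $\textsf{A}$) is correct and is a mild simplification over the paper, which instead obtains $(4)$ by running the cycle $(1)\Rightarrow(2)\Rightarrow(3)\Rightarrow(4)$. But this does not help with the missing step above.
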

\begin{proof} (1) $\Rightarrow$ (2) Let $P\stackrel{\simeq}\rightarrow X$ be a semi-projective resolution with $\mathrm{C}^s(P)\in\textsf{A}$ for all $s\in\mathbb{Z}$ and $Y\in\mathrm{CE}\textrm{-}\textsf{B}$. As $P\in\mathrm{CE}\textrm{-}\textsf{A}$,  it follows by  \cite[Theorem 9.4]{E} that $\overline{\mathrm{Ext}}^1(P,Y)=0$,  so $\xi xt^{1}_\xi(X,Y)=0$ by Lemma \ref{lem2.3}.

(2) $\Rightarrow$ (3) Let $X\in\mathrm{D}(R)$ be such that $\xi xt^{1}_\xi(X,Y)=0$ for all $Y\in\mathrm{CE}\textrm{-}\textsf{B}$ and $P\stackrel{\simeq}\rightarrow X$ a semi-projective resolution. Then $\mathrm{Ext}^1_R(\mathrm{C}^s(P),B)\cong\overline{\mathrm{Ext}}^1(P,B[-s])=0$ for all $B\in\textsf{B}$ by \cite[Lemma 9.3]{E} and Lemma \ref{lem2.3} as $B[-s]\in\mathrm{CE}\textrm{-}\textsf{B}$, and so $\mathrm{C}^s(P)\in\textsf{A}$ for all $s\in\mathbb{Z}$. Hence Lemma \ref{lem4.1}(1.1) implies that $\mathrm{H}^s(X)\in\textsf{A}$ and $\mathrm{H}^{-s}(\mathrm{RHom}_R(X,B))\cong\mathrm{H}^{-s}(\mathrm{Hom}_R(P,B))\cong\mathrm{Hom}_R(\mathrm{H}^s(X),B)$ for all $B\in\textsf{B}$ and $s\in\mathbb{Z}$.

 (3) $\Rightarrow$ (4) Let $P\stackrel{\simeq}\rightarrow X$ be a semi-projective resolution. Then $\mathrm{C}^s(P)\in\textsf{A}$ by Lemma \ref{lem4.1}(1.1) for all $s\in\mathbb{Z}$, so $X\in\mathcal{A}_\textsf{P}$.

  (4) $\Rightarrow$ (1) is trivial and (2) $\Leftrightarrow$ (5) follows by Lemma \ref{lem2.3}.
\end{proof}

Using similar arguments and Lemma \ref{lem4.1}(1.2), one can prove the next dual lemma.

\begin{lem}\label{lem2.2} Let $(\textsf{A},\textsf{B})$ be a hereditary cotorsion pair in $\textrm{Mod}R$ and $Y\in\mathrm{D}(R)$. Then the following are equivalent:

$(1)$ $Y\in\mathcal{B}_\textsf{I}$;

$(2)$ $\xi xt^{1}_\xi(X,Y)=0$ for  all $X\in\mathrm{CE}\textrm{-}\textsf{A}$;

$(3)$ $\mathrm{H}^s(Y)\in\textsf{A}$ and $\mathrm{H}^{s}(\mathrm{RHom}_R(A,Y))\cong\mathrm{Hom}_R(A,\mathrm{H}^s(X))$ for all $A\in\textsf{A}$ and $s\in\mathbb{Z}$;

$(4)$ $\mathrm{Z}^s(I)\in\textsf{A}$ for any semi-injective resolution $Y\stackrel{\simeq}\rightarrow I$ and all $s\in\mathbb{Z}$;

$(5)$ $\overline{\mathrm{Ext}}^1(X,I)=0$ for all $X\in\mathrm{CE}\textrm{-}\textsf{A}$, where $Y\stackrel{\simeq}\rightarrow I$ is a semi-injective resolution.
\end{lem}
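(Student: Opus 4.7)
The plan is to mimic the proof of Lemma \ref{lem2.1} verbatim, dualizing at every step by replacing the semi-projective resolution $P\stackrel{\simeq}\rightarrow X$ with a semi-injective resolution $Y\stackrel{\simeq}\rightarrow I$ and using Lemma \ref{lem4.1}(1.2) (the long exact sequence built from $\mathrm{Z}^{s-1}(X)$) in place of (1.1). The equivalence (2) $\Leftrightarrow$ (5) is immediate from Lemma \ref{lem2.3} and (4) $\Rightarrow$ (1) is tautological, so only (1) $\Rightarrow$ (2), (2) $\Rightarrow$ (3), and (3) $\Rightarrow$ (4) need genuine work.

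For (1) $\Rightarrow$ (2), fix a semi-injective resolution $Y\stackrel{\simeq}\rightarrow I$ whose cycle modules $\mathrm{Z}^s(I)$ lie in $\textsf{B}$. I would first upgrade this to $I\in\mathrm{CE}\textrm{-}\textsf{B}$: each component $I^s$ is injective and hence lies in $\textsf{B}$, while the short exact sequences $0\rightarrow\mathrm{Z}^s(I)\rightarrow I^s\rightarrow\mathrm{B}^{s+1}(I)\rightarrow0$ and $0\rightarrow\mathrm{B}^s(I)\rightarrow\mathrm{Z}^s(I)\rightarrow\mathrm{H}^s(I)\rightarrow0$, combined with heredity of $(\textsf{A},\textsf{B})$ (closure of $\textsf{B}$ under cokernels of monomorphisms), force $\mathrm{B}^s(I)$ and $\mathrm{H}^s(I)$ into $\textsf{B}$ as well. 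For any $X\in\mathrm{CE}\textrm{-}\textsf{A}$, \cite[Theorem 9.4]{E} then yields $\overline{\mathrm{Ext}}^1(X,I)=0$, and Lemma \ref{lem2.3} converts this to $\xi xt^{1}_\xi(X,Y)=0$.

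For (2) $\Rightarrow$ (3), note that for each $A\in\textsf{A}$ and $s\in\mathbb{Z}$ the stalk complex $A[-s]$ trivially belongs to $\mathrm{CE}\textrm{-}\textsf{A}$, so the hypothesis combined with Lemma \ref{lem2.3} gives $\overline{\mathrm{Ext}}^1(A[-s],I)=0$; by \cite[Lemma 9.3]{E} this equals $\mathrm{Ext}^1_R(A,\mathrm{Z}^s(I))$, forcing $\mathrm{Z}^s(I)\in\textsf{B}$ for all $s$. Plugging $L=A$ into Lemma \ref{lem4.1}(1.2) then collapses the long exact sequence (via $\mathrm{Ext}^{i\geq1}_R(A,\textsf{B})=0$) and simultaneously delivers $\mathrm{H}^s(Y)\in\textsf{B}$ and the adjunction-like formula $\mathrm{H}^s(\mathrm{RHom}_R(A,Y))\cong\mathrm{Hom}_R(A,\mathrm{H}^s(Y))$. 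The implication (3) $\Rightarrow$ (4) is then another invocation of Lemma \ref{lem4.1}(1.2), this time extracting $\mathrm{Z}^s(I)\in\textsf{B}$ directly from the cohomology hypothesis.

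The main obstacle is the opening promotion step in (1) $\Rightarrow$ (2): heredity of $(\textsf{A},\textsf{B})$ is genuinely needed to pass from $\mathrm{Z}^s(I)\in\textsf{B}$ to $I\in\mathrm{CE}\textrm{-}\textsf{B}$, since \cite[Theorem 9.4]{E} requires a full CE-$\textsf{B}$ complex, not just a complex with $\textsf{B}$-cycles. Once that step is secured, every remaining argument is a clean dualization of the corresponding step in Lemma \ref{lem2.1}.
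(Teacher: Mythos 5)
Your proof is correct and carries out exactly the dualization the paper gestures at (``Using similar arguments and Lemma \ref{lem4.1}(1.2), one can prove the next dual lemma''): semi-injective resolutions in place of semi-projective ones, $\mathrm{Z}^s(I)$ in place of $\mathrm{C}^s(P)$, and the long exact sequence (1.2) in place of (1.1), with the same invocations of \cite[Theorem 9.4, Lemma 9.3]{E} and Lemma \ref{lem2.3}. Your explicit justification of the promotion $\mathrm{Z}^s(I)\in\textsf{B}\Rightarrow I\in\mathrm{CE}\textrm{-}\textsf{B}$ via heredity (closure of $\textsf{B}$ under cokernels of monomorphisms applied to $0\to\mathrm{Z}^s\to I^s\to\mathrm{B}^{s+1}\to 0$ and $0\to\mathrm{B}^s\to\mathrm{Z}^s\to\mathrm{H}^s\to 0$) is a worthwhile elaboration of a step the paper leaves implicit in the proof of Lemma \ref{lem2.1}.
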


\begin{lem}\label{lem2.23} Let $(\textsf{A},\textsf{B})$ be a hereditary cotorsion pair in $\textrm{Mod}R$. Then $\mathrm{CE}\textrm{-}\textsf{A}\subseteq\mathcal{A}_\textsf{P}$ and $\mathrm{CE}\textrm{-}\textsf{B}\subseteq\mathcal{B}_\textsf{I}$ in $\mathrm{D}(R)$.
\end{lem}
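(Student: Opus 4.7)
The plan is to verify condition (3) of Lemma \ref{lem2.1} for any $X \in \mathrm{CE}\text{-}\textsf{A}$, and its dual in Lemma \ref{lem2.2}(3) for any $Y \in \mathrm{CE}\text{-}\textsf{B}$; both conditions ask for a module-class inclusion at cohomology plus a naturality isomorphism for $\mathrm{RHom}$.

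For $X \in \mathrm{CE}\text{-}\textsf{A}$ I would first upgrade the CE hypothesis: the short exact sequence $0 \to \mathrm{H}^s(X) \to \mathrm{C}^s(X) \to \mathrm{B}^{s+1}(X) \to 0$ together with closure of $\textsf{A}$ under extensions (since $\textsf{A}$ is the left class of a cotorsion pair) forces $\mathrm{C}^s(X) \in \textsf{A}$ for every $s$. Because the cotorsion pair is hereditary, $\mathrm{Ext}^{\geq 1}_R(\textsf{A}, B) = 0$ for every $B \in \textsf{B}$, so Lemma \ref{lem4.1}(1.1) applied with $\textsf{X} = \textsf{A}$ and $M = B$ kills every Ext term and collapses the long exact sequence to $\mathrm{H}^{-s}(\mathrm{Hom}_R(X, B)) \cong \mathrm{Hom}_R(\mathrm{H}^s(X), B)$. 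The dual reasoning for $Y \in \mathrm{CE}\text{-}\textsf{B}$, using $0 \to \mathrm{B}^s(Y) \to \mathrm{Z}^s(Y) \to \mathrm{H}^s(Y) \to 0$ and Lemma \ref{lem4.1}(1.2) with $L = A \in \textsf{A}$, gives $\mathrm{H}^s(\mathrm{Hom}_R(A, Y)) \cong \mathrm{Hom}_R(A, \mathrm{H}^s(Y))$.

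To pass from $\mathrm{Hom}$ to $\mathrm{RHom}$, I would pick a bounded-below injective resolution $B \to J$ and run the hypercohomology spectral sequence of the double complex $\mathrm{Hom}_R(X^\bullet, J^\bullet)$: taking $d_J$-cohomology first, the $E_1$ page $E_1^{p, q} = \mathrm{Ext}^q_R(X^p, B)$ vanishes for $q \geq 1$ because $X^p \in \textsf{A}$ meets $B \in \textsf{B}$. Only the row $q = 0$ survives, yielding $\mathrm{RHom}_R(X, B) \simeq \mathrm{Hom}_R(X, B)$ in $\mathrm{D}(\mathbb{Z})$. A dual argument with a bounded-above projective resolution $P \to A$ gives $\mathrm{RHom}_R(A, Y) \simeq \mathrm{Hom}_R(A, Y)$. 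Combined with the previous paragraph these verify Lemma \ref{lem2.1}(3) and Lemma \ref{lem2.2}(3), giving $X \in \mathcal{A}_\textsf{P}$ and $Y \in \mathcal{B}_\textsf{I}$ respectively.

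The step I expect to be most delicate is the convergence of the hypercohomology spectral sequence when $X$ (or $Y$) is unbounded. Convergence is not automatic, but it is rescued by the fact that $J$ and $P$ resolve \emph{modules} and are therefore bounded on one side; the associated filtration is then complete, exhaustive and regular, so the classical half-plane convergence theorem applies and the degeneration genuinely computes $\mathrm{H}^\ast(\mathrm{RHom})$.
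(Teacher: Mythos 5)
Your overall strategy matches the paper's: reduce the claim to the cohomological characterizations in Lemma~\ref{lem2.1}(3) and Lemma~\ref{lem2.2}(3). Your first two steps are correct and essentially forced: the extension $0\to\mathrm{H}^s(X)\to\mathrm{C}^s(X)\to\mathrm{B}^{s+1}(X)\to0$ does put $\mathrm{C}^s(X)$ in $\textsf{A}$, and hereditariness does give $\mathrm{Ext}^{\geq 1}_R(\textsf{A},\textsf{B})=0$, so Lemma~\ref{lem4.1} collapses to the desired isomorphism at the level of plain $\mathrm{Hom}$. The paper proves the $\mathrm{CE}$-$\textsf{B}\subseteq\mathcal{B}_{\textsf{I}}$ half in detail and by a genuinely different route for the $\mathrm{RHom}$-identification: it replaces $A$ by a bounded-above projective resolution $P_A$, writes $Y$ as an inverse limit of brutal truncations $Y_{\supset n}$, and uses \cite[Lemma 2.5]{CFHo} plus a Mittag--Leffler argument on an inverse system of epimorphisms to conclude that $\mathrm{Hom}_R(\ker(P_A\to A),Y)$ is exact, hence $\mathrm{RHom}_R(A,Y)\simeq\mathrm{Hom}_R(A,Y)$. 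You instead replace $B$ by a bounded-below injective resolution $J$; this is a legitimate alternative, but the justification you give for it has a gap.

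The gap is in the convergence claim. You filter the product total complex of $\mathrm{Hom}_R(X^\bullet,J^\bullet)$ by the $X$-degree so that $d_0=d_J$ and $E_1^{p,q}=\mathrm{Ext}^q_R(X^p,B)$. In total degree $n$ that filtration is $F^k=\prod_{k\le p\le n}\mathrm{Hom}_R(X^{-p},J^{n-p})$ (the constraint $q\geq 0$ bounds $p$ above by $n$, but $p$ is unbounded below since $X$ is unbounded). This filtration is complete and Hausdorff, but it is \emph{not exhaustive}: the union $\bigcup_k F^k$ only captures elements of the product that are eventually zero as $p\to-\infty$, which is a proper subgroup of the total degree-$n$ piece. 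So the filtration is not ``complete, exhaustive and regular'' as you claim, and the classical half-plane convergence theorem does not apply to the filtration you chose (it applies to the other filtration, by the $J$-degree, which is bounded below but has the wrong $E_1$ for your argument). The clean fix is to avoid the spectral sequence convergence question entirely: form the augmented double complex $\mathrm{Hom}_R(X^{-p},E^q)$, where $E=\mathrm{cn}(B\to J)$ is the exact, bounded-below complex $0\to B\to J^0\to J^1\to\cdots$. Each column $\mathrm{Hom}_R(X^{-p},E^\bullet)$ is exact because the cocycles of $E$ all lie in $\textsf{B}$ (hereditariness again) while $X^{-p}\in\textsf{A}$, so by the Acyclic Assembly Lemma (Weibel, 2.7.3(1), upper half-plane with exact columns) the product total complex is acyclic; that total complex is precisely the cone of $\mathrm{Hom}_R(X,B)\to\mathrm{Hom}_R(X,J)$, giving $\mathrm{RHom}_R(X,B)\simeq\mathrm{Hom}_R(X,B)$ directly. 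With this replacement your proof becomes complete and parallels the paper, trading the Mittag--Leffler argument on truncations of $Y$ for an assembly argument on the injective coresolution of $B$.
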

\begin{proof} Let $Y\in\mathrm{CE}\textrm{-}\textsf{B}$, and let $A\in\textsf{A}$ and $P_A\rightarrow A$ be a projective resolution. Set $U:=\mathrm{Ker}(P_A\rightarrow A)$ and $Y_{\supset_n}:0\rightarrow\mathrm{C}^{n}(Y)\rightarrow Y^{n+1}\rightarrow Y^{n+2}\rightarrow\cdots$. Then $\{Y_{\supset_n}\twoheadrightarrow Y_{\supset_{n+1}}\}_{n\geq0}$ is an inverse system of epimorphisms in $\mathrm{CE}\textrm{-}\textsf{B}$
and $Y\cong\underleftarrow{\textrm{lim}}Y_{\supset_n}$. Set $K_{n,n+1}:=\mathrm{Ker}(Y_{\supset_n}\rightarrow Y_{\supset_{n+1}})$. Then $K_{n,n+1}\in\mathrm{CE}\textrm{-}\textsf{B}$. For $n\geq0$, the induced sequence $$0\rightarrow\mathrm{Hom}_{R}(U,K_{n,n+1})\rightarrow\mathrm{Hom}_{R}(U,Y_{\supset_n})\rightarrow
\mathrm{Hom}_{R}(U,Y_{\supset_{n+1}})\rightarrow0$$ is exact. Thus $\{\mathrm{Hom}_{R}(U,Y_{\supset_{n}})\twoheadrightarrow\mathrm{Hom}_{R}(U,Y_{\supset_{n+1}})\}$ is an inverse system of epimorphisms. As $U\in\mathrm{CE}\textrm{-}\textsf{A}$ is exact, $\mathrm{Hom}_{R}(U,Y_{\supset_{n}})$ is exact by \cite[Lemma 2.5]{CFHo}, it implies that
 $\mathrm{Hom}_R(U,Y)\cong\underleftarrow{\textrm{lim}}\mathrm{Hom}_R(U,Y_{\supset_n})$ is exact. Thus $$\mathrm{RHom}_R(A,Y)\simeq\mathrm{Hom}_R(P_A,Y)\simeq\mathrm{Hom}_R(A,Y),$$ so $\mathrm{H}^s(\mathrm{RHom}_R(A,Y))\cong\mathrm{Hom}_R(A,\mathrm{H}^s(Y))$ for all $s\in\mathbb{Z}$ by Lemma \ref{lem4.1}(1.1), and hence $Y\in\mathcal{B}_\textsf{I}$ by Lemma \ref{lem2.2}. Similarly, one can show that $\mathrm{CE}\textrm{-}\textsf{A}\subseteq\mathcal{A}_\textsf{P}$.
\end{proof}

With the above preparations, now we prove Theorem A.

\begin{thm}\label{lem2.5} The following are equivalent:

$(1)$ $(\textsf{A},\textsf{B})$ is a complete hereditary cotorsion pair in $\mathrm{Mod}R$;

 $(2)$ $(\mathcal{A}_\textsf{P},\mathcal{B}_\textsf{I})$ is a complete hereditary cotorsion pair in $\mathrm{D}(R)$ with respect to $\xi$.
\end{thm}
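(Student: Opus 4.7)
The plan is to prove the equivalence via the stalk-complex embedding $M\mapsto M[0]$ of $\mathrm{Mod}R$ into $\mathrm{D}(R)$, which reconciles ordinary $\mathrm{Ext}$ with $\xi xt_\xi$, combined with the characterizations of $\mathcal{A}_\textsf{P}$ and $\mathcal{B}_\textsf{I}$ in Lemmas \ref{lem2.3}--\ref{lem2.23}. The nontrivial content lies in the direction $(1)\Rightarrow(2)$.

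For $(1)\Rightarrow(2)$, I would proceed in three parts. To see $\mathcal{A}_\textsf{P}^{\perp_\xi}=\mathcal{B}_\textsf{I}$: for $\mathcal{B}_\textsf{I}\subseteq\mathcal{A}_\textsf{P}^{\perp_\xi}$, pick $Y\in\mathcal{B}_\textsf{I}$ with semi-injective resolution $Y\xrightarrow{\simeq}I$; the assumption $\mathrm{Z}^s(I)\in\textsf{B}$, combined with each $I^s$ injective (hence in $\textsf{B}$) and closure of $\textsf{B}$ under cokernels of monomorphisms from hereditariness, forces $I\in\mathrm{CE}\textrm{-}\textsf{B}$, so Lemma \ref{lem2.1} delivers $\xi xt^1_\xi(X,Y)\cong\xi xt^1_\xi(X,I)=0$ for every $X\in\mathcal{A}_\textsf{P}$. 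The reverse inclusion restricts to $X\in\mathrm{CE}\textrm{-}\textsf{A}\subseteq\mathcal{A}_\textsf{P}$ (Lemma \ref{lem2.23}) and invokes Lemma \ref{lem2.2}; the identity ${}^{\perp_\xi}\mathcal{B}_\textsf{I}=\mathcal{A}_\textsf{P}$ is dual. Hereditariness then follows by feeding a triangle in $\xi$ with two outer terms in $\mathcal{A}_\textsf{P}$ (resp. $\mathcal{B}_\textsf{I}$) into the long exact sequences of Lemma \ref{lem4.1} and using hereditariness of $(\textsf{A},\textsf{B})$ in $\mathrm{Mod}R$. For completeness, take a semi-projective resolution $P\xrightarrow{\simeq}X$ and apply Enochs' lifting \cite[Theorem 9.4]{E} to obtain a CE exact sequence $0\to B\to A\to P\to 0$ in $\mathrm{C}(R)$ with $A\in\mathrm{CE}\textrm{-}\textsf{A}$ and $B\in\mathrm{CE}\textrm{-}\textsf{B}$; by Remark \ref{lem:2.2}(4) this descends to a cohomologically ghost triangle in $\xi$, and Lemma \ref{lem2.23} places $A\in\mathcal{A}_\textsf{P}$ and $B\in\mathcal{B}_\textsf{I}$. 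The dual construction via a semi-injective resolution supplies the right approximation.

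For $(2)\Rightarrow(1)$, I would transfer structure along stalk complexes using $\mathrm{Ext}^n_R(M,N)\cong\xi xt^n_\xi(M[0],N[0])$ (computed via a $\xi$-injective coresolution of $N[0]$ built from the usual injective resolution of $N$). Applying $\mathrm{H}^0$ to the complete approximating triangles of $M[0]$ in $\xi$---which are cohomologically ghost by Remark \ref{lem:2.2}(2), so yield short exact sequences on cohomology---produces the complete approximations of $M$ in $\mathrm{Mod}R$; the memberships $\mathrm{H}^0(A)\in\textsf{A}$ and $\mathrm{H}^0(B)\in\textsf{B}$ come after first extracting hereditariness of $(\textsf{A},\textsf{B})$ from the derived hereditariness applied to short exact sequences of stalks. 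The main obstacle I expect is the completeness step in $(1)\Rightarrow(2)$: verifying that Enochs' CE approximation of $P$ genuinely descends to a $\xi$-approximation of $X$ in $\mathrm{D}(R)$ with the correct containments, which requires stitching together the $\overline{\mathrm{Ext}}^1$/$\xi xt^1_\xi$ bridge of Lemma \ref{lem2.3}, the inclusions of Lemma \ref{lem2.23}, and the cohomologically ghost structure from Remark \ref{lem:2.2}(2) and (4).
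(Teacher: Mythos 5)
Your identification of the cotorsion-pair structure (via Lemmas \ref{lem2.1}--\ref{lem2.23}), the hereditariness argument, and the $(2)\Rightarrow(1)$ direction all track the paper's proof closely, and those parts are sound. The gap is in the completeness step of $(1)\Rightarrow(2)$.

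You obtain the left approximation of $X$ by choosing a semi-projective resolution $P\xrightarrow{\simeq}X$ and ``applying Enochs' lifting \cite[Theorem 9.4]{E}'' to produce a CE exact sequence $0\to B\to A\to P\to 0$ with $A\in\mathrm{CE}\text{-}\textsf{A}$, $B\in\mathrm{CE}\text{-}\textsf{B}$. But as recalled in the paper's introduction, \cite[Theorem 9.4]{E} lifts a hereditary cotorsion pair in $\mathrm{Mod}R$ \emph{that is cogenerated by a set} to a complete CE cotorsion pair in $\mathrm{C}(R)$. Theorem \ref{lem2.5} assumes only that $(\textsf{A},\textsf{B})$ is complete and hereditary; completeness does not (in ZFC) imply cogeneration by a set, so you cannot invoke that theorem here. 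This is precisely why the paper takes a different route for the approximations: it uses \cite[Theorem 2.4]{YD}, which requires only completeness and hereditariness, to obtain the complete cotorsion pairs $(dg\widetilde{\textsf{A}},\widetilde{\textsf{B}})$ and $(\widetilde{\textsf{A}},dg\widetilde{\textsf{B}})$ in $\mathrm{C}(R)$. From these it produces a complex $\tilde Y\in dg\widetilde{\textsf{B}}$ quasi-isomorphic to $C$ (passing through $\bar X\in dg\widetilde{\textsf{A}}$ with $\bar Y\in\widetilde{\textsf{B}}$, $\tilde X\in\widetilde{\textsf{A}}$ exact), and then manufactures the CE exact sequence $0\to Y\to X\to\tilde Y\to 0$ by hand: choosing module-level special $\textsf{A}$-precovers $X^s\twoheadrightarrow\mathrm{Z}^s(\tilde Y)$ with kernels in $\textsf{B}$, assembling them into a map $\bigoplus_s X^s[-s]\to\tilde Y$, and adding the contractible summand $\mathrm{cn}(\mathrm{id}_{\tilde Y[-1]})$ to make the map surjective and CE exact. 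The right approximation is then obtained via the octahedral axiom from the left approximation of $D$ in the ghost triangle $C\to I\to D\to C[1]$ with $I\in\langle\textsf{I}\rangle_1$. So the missing idea is not a mere citation swap: your shortcut silently imports a set-theoretic hypothesis the theorem does not have, and the actual argument requires the explicit CE-approximation construction sketched above.
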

\begin{proof}  (1) $\Rightarrow$ (2) Let $X\in{^{\perp_\xi}}\mathcal{B}_\textsf{I}$ and $Q\stackrel{\simeq}\rightarrow X$ be a semi-projective resolution, and let  $Y\in\mathrm{CE}\textrm{-}\textsf{B}$. Then $Y\in\mathcal{B}_\textsf{I}$ by Lemma \ref{lem2.23}, so
 $\overline{\mathrm{Ext}}^1(Q,Y)=0$  by Lemma \ref{lem2.3} and hence $Q\in\mathrm{CE}\textrm{-}\textsf{A}$ by \cite[Theorem 9.4]{E}. Hence Lemma \ref{lem2.1} implies that $X\in\mathcal{A}_\textsf{P}$. Let $X\in\mathcal{A}_\textsf{P}$ and $P\stackrel{\simeq}\rightarrow X$ be a semi-projective resolution with $P\in\mathrm{CE}\textrm{-}\textsf{A}$. Then $\xi xt^{1}_\xi(P,Y)=0$ for all $Y\in\mathcal{B}_\textsf{I}$ by Lemma \ref{lem2.2}, so $X\in{^{\perp_\xi}}\mathcal{B}_\textsf{I}$. Similarly, one has $Y\in\mathcal{B}_\textsf{I}$ if and only if $Y\in\mathcal{A}^{\perp_\xi}_\textsf{P}$.
 Thus $(\mathcal{A}_\textsf{P},\mathcal{B}_\textsf{I})$ is a cotorsion pair in $\mathrm{D}(R)$ with respect to $\xi$. Let $C\in\mathrm{D}(R)$. Consider the exact sequences $0\rightarrow \bar{Y}\rightarrow \bar{X}\rightarrow C\rightarrow0$ and $0\rightarrow \bar{X}\rightarrow \tilde{Y}\rightarrow \tilde{X}\rightarrow0$ in $\mathrm{C}(R)$ with $\bar{X}\in\mathrm{dg}\widetilde{\textsf{A}}$, $\tilde{Y}\in\mathrm{dg}\widetilde{\textsf{B}}$ and $\bar{Y}\in\widetilde{\textsf{B}}$, $\tilde{X}\in\widetilde{\textsf{A}}$ by \cite[Theorem 2.4]{YD}. Then each $\tilde{Y}^s\in\textsf{A}\cap\textsf{B}$.
For $s\in\mathbb{Z}$, there exists an exact sequence $0\rightarrow K^s\rightarrow X^s\stackrel{f^s}\rightarrow \mathrm{Z}^s(\tilde{Y})\rightarrow0$ with $X^s\in\textsf{A}$ and $K^s\in\textsf{B}$. As $\mathrm{Hom}_R(X^s,\mathrm{Z}^s(\tilde{Y}))\cong\mathrm{Hom}_{\mathrm{C}(R)}(X^s[-s],\tilde{Y})$, it induces a cochain map $f:\bigoplus_{s\in\mathbb{Z}}X^s[-s]\rightarrow \tilde{Y}$. Set $X=\bigoplus_{s\in\mathbb{Z}}X^s[-s]\oplus\mathrm{cn}(\mathrm{id}_{\tilde{Y}[-1]})$ and $p=(0,\mathrm{id}_{\tilde{Y}^s})_{s\in\mathbb{Z}}:\mathrm{cn}(\mathrm{id}_{\tilde{Y}[-1]})\rightarrow \tilde{Y}$. We have the following CE exact sequence $$0\longrightarrow Y\longrightarrow X\stackrel{[f,p]}\longrightarrow \tilde{Y}\longrightarrow0,$$such that $X\in\mathrm{CE}\textrm{-}\textsf{A}$ and $Y\in\mathrm{CE}\textrm{-}\textsf{B}$.
Hence
 $$Y\rightarrow X\rightarrow C\rightarrow Y[1]$$ is a cohomologically ghost triangle in $\mathrm{D}(R)$ with $X\in\mathcal{A}_\textsf{P}$ and $Y\in\mathcal{B}_\textsf{I}$ by Lemma \ref{lem2.23}. On the other hand, there is a cohomologically ghost triangle $C\rightarrow I\rightarrow D\rightarrow C[1]$ in $\mathrm{D}(R)$ with $I\in\langle\textsf{I}\rangle_1$, and there is a cohomologically ghost triangle $Y_D\rightarrow X'\rightarrow D\rightarrow Y_D[1]$ in $\mathrm{D}(R)$ with $X'\in\mathcal{A}_\textsf{P}$ and $Y_D\in\mathcal{B}_\textsf{I}$.  We have the next commutative diagram of exact triangles:\begin{center} $\xymatrix@C=18pt@R=16pt{
    &Y_D\ar[d] \ar@{=}[r] & Y_D \ar[d] \\
  C \ar@{=}[d]\ar[r]& Y'\ar[d]\ar[r] &X'\ar[d]\ar[r]&C[1]\ar@{=}[d] \\
 C\ar[r]& I\ar[d]\ar[r] &D\ar[d]\ar[r]&C[1] \\
    & Y_D[1]\ar@{=}[r]& Y_D[1] &}$
\end{center}As $Y_D,I\in\mathcal{B}_\textsf{I}$, $Y'\in\mathcal{B}_\textsf{I}$, it implies that $$C\rightarrow Y'\rightarrow X'\rightarrow C[1]$$ is the desired cohomologically ghost triangle in $\mathrm{D}(R)$ with $X'\in\mathcal{A}_\textsf{P}$ and $Y'\in\mathcal{B}_\textsf{I}$.
Thus $(\mathcal{A}_\textsf{P},\mathcal{B}_\textsf{I})$ is a complete cotorsion pair in $\mathrm{D}(R)$ with respect to $\xi$. Let $X_1\rightarrow X_2\rightarrow X_3\rightarrow X_1[1]$ be a cohomologically ghost triangle in $\mathrm{D}(R)$ with $X_2,X_3\in\mathcal{A}_\textsf{P}$. Then $0\rightarrow\mathrm{H}^s(X_1)\rightarrow \mathrm{H}^s(X_2)\rightarrow \mathrm{H}^s(X_3)\rightarrow0$ is exact and $\mathrm{H}^s(X_2),\mathrm{H}^s(X_3)\in\textsf{A}$, so $\mathrm{H}^s(X_1)\in\textsf{A}$ for all $s\in\mathbb{Z}$. Also for $s\in\mathbb{Z}$ and $B\in\textsf{B}$, there is a commutative diagram:
\begin{center} $\xymatrix@C=18pt@R=16pt{
& \mathrm{H}^{-s}(\mathrm{RHom}_R(X_3,B)) \ar[d]^\cong\ar[r]& \mathrm{H}^{-s}(\mathrm{RHom}_R(X_2,B))\ar[d]^\cong\ar[r] &\mathrm{H}^{-s}(\mathrm{RHom}_R(X_1,B))\ar[d]& \\
0\ar[r]&\mathrm{Hom}_R(\mathrm{H}^s(X_3),B) \ar[r]& \mathrm{Hom}_R(\mathrm{H}^s(X_2),B)\ar[r] &\mathrm{Hom}_R(\mathrm{H}^s(X_1),B)\ar[r]&0 }$
\end{center}it follows that the above row is exact and $X_1\in\mathcal{A}_\textsf{P}$ by Lemma \ref{lem2.1}. By the dual argument, it implies that the cotorsion pair $(\mathcal{A}_\textsf{P},\mathcal{B}_\textsf{I})$ in $\mathrm{D}(R)$ is hereditary with respect to $\xi$.

 (2) $\Rightarrow$ (1) Let $A\in\textsf{A}$ and $B\in\textsf{B}$, and let $B\rightarrow J_B$ be an injective resolution. Then $J_B\in\mathrm{CE}\textrm{-}\textsf{B}$ and $\xi xt^{1}_\xi(A,B)=0$ by Lemma \ref{lem2.23}. Thus $\mathrm{Ext}^1_R(A,B)\cong\overline{\mathrm{Ext}}^1(A,J_B)=0$ by \cite[Lemma 9.1]{E} and  Lemma \ref{lem2.3}. Let $U\in{^\bot}\textsf{B}$, and let $Y\in\mathcal{B}_\textsf{I}$ and $Y\stackrel{\simeq}\rightarrow I$ be a semi-injective resolution with $I\in\mathrm{CE}\textrm{-}\textsf{B}$.
Then
$\overline{\mathrm{Ext}}^1(U[-s],I)\cong\mathrm{Ext}^1_R(U,\mathrm{Z}^s(I))=0$ by \cite[Lemma 9.1]{E}, so $\xi xt^{1}_\xi(U[-s],Y)=0$ by Lemma \ref{lem2.3}. Hence $U[-s]\in\mathcal{A}_\textsf{P}$, and so $U\in\textsf{A}$ by Lemma \ref{lem2.1}. Similarly, if $V\in\textsf{A}^\bot$, then $V\in\textsf{B}$. Thus $(\textsf{A},\textsf{B})$ is a cotorsion pair in $\mathrm{Mod}R$.
 Let $M\in\mathrm{Mod}R$. There are two cohomologically ghost triangles $Y\rightarrow X\rightarrow M\rightarrow Y[1]$
and
$M\rightarrow Y'\rightarrow X'\rightarrow M[1]$ in $\mathrm{D}(R)$ with $X,X'\in\mathcal{A}_\textsf{P}$ and $Y,Y'\in\mathcal{B}_\textsf{I}$, its yield two short exact sequences of $R$-modules $$0\rightarrow \mathrm{H}^0(Y)\rightarrow \mathrm{H}^0(X)\rightarrow M\rightarrow 0,\
0\rightarrow M\rightarrow \mathrm{H}^0(Y')\rightarrow \mathrm{H}^0(X')\rightarrow 0$$ with $\mathrm{H}^0(X),\mathrm{H}^0(X')\in\textsf{A}$ and $\mathrm{H}^0(Y),\mathrm{H}^0(Y')\in\textsf{B}$ by Lemmas \ref{lem2.1} and \ref{lem2.2}. The hereditariness is easy to check. Thus $(\textsf{A},\textsf{B})$ is a complete hereditary cotorsion pair in $\mathrm{Mod}R$.
\end{proof}

\begin{cor}\label{lem2.7} $(\textsf{A},\textsf{C})$ and $(\textsf{C},\textsf{B})$ are complete hereditary cotorsion pairs in $\mathrm{Mod}R$
 if and only if $(\mathcal{A}_\textsf{P},\mathcal{C}_\textsf{I})$ and ($\mathcal{C}_\textsf{I},\mathcal{B}_\textsf{I})$ are complete hereditary cotorsion pairs in $\mathrm{D}(R)$ with respect to $\xi$.
\end{cor}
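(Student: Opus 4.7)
The plan is to apply Theorem \ref{lem2.5} to each of the two hypothesized module cotorsion pairs and then merge the two resulting $\mathrm{D}(R)$-cotorsion pairs via the identification $\mathcal{C}_\textsf{P}=\mathcal{C}_\textsf{I}$.

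For the forward implication, Theorem \ref{lem2.5} applied to $(\textsf{A},\textsf{C})$ directly yields $(\mathcal{A}_\textsf{P},\mathcal{C}_\textsf{I})$ as a complete hereditary cotorsion pair in $\mathrm{D}(R)$ with respect to $\xi$, and applied to $(\textsf{C},\textsf{B})$ yields the same for $(\mathcal{C}_\textsf{P},\mathcal{B}_\textsf{I})$. The outstanding task is thus the identification $\mathcal{C}_\textsf{P}=\mathcal{C}_\textsf{I}$. Given $X\in\mathcal{C}_\textsf{P}$ with semi-projective resolution $P\stackrel{\simeq}\rightarrow X$ satisfying $\mathrm{C}^s(P)\in\textsf{C}$ for every $s$, I will combine $\textsf{P}\subseteq\textsf{C}$ with the two-sided closure of $\textsf{C}$ (closed under kernels of epimorphisms between its members by heredity of $(\textsf{C},\textsf{B})$, and under cokernels of monomorphisms between its members by heredity of $(\textsf{A},\textsf{C})$) to chase the canonical short exact sequences
\begin{align*}
&0\to \mathrm{B}^{s+1}(P)\to P^{s+1}\to \mathrm{C}^{s+1}(P)\to 0,\\
&0\to \mathrm{Z}^s(P)\to P^s\to \mathrm{B}^{s+1}(P)\to 0,\\
&0\to \mathrm{B}^s(P)\to \mathrm{Z}^s(P)\to \mathrm{H}^s(P)\to 0
\end{align*}
and deduce $P\in\mathrm{CE}\textrm{-}\textsf{C}$. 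Lemma \ref{lem2.23} applied to $(\textsf{A},\textsf{C})$ then gives $\mathrm{CE}\textrm{-}\textsf{C}\subseteq\mathcal{C}_\textsf{I}$, so $X\simeq P\in\mathcal{C}_\textsf{I}$. The reverse inclusion is dual: for $X\in\mathcal{C}_\textsf{I}$ with semi-injective resolution $X\stackrel{\simeq}\rightarrow I$ and $\mathrm{Z}^s(I)\in\textsf{C}$, the containment $\textsf{I}\subseteq\textsf{C}$ together with the same two-sided closure forces $I\in\mathrm{CE}\textrm{-}\textsf{C}$, and then Lemma \ref{lem2.23} for $(\textsf{C},\textsf{B})$ places $I$, hence $X$, in $\mathcal{C}_\textsf{P}$.

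For the reverse implication I would first apply the $(2)\Rightarrow(1)$ half of Theorem \ref{lem2.5} to $(\mathcal{A}_\textsf{P},\mathcal{C}_\textsf{I})$ to obtain $(\textsf{A},\textsf{C})$ as a CHCP in $\mathrm{Mod}R$. To extract $(\textsf{C},\textsf{B})$ from $(\mathcal{C}_\textsf{I},\mathcal{B}_\textsf{I})$, I would rerun the $(2)\Rightarrow(1)$ argument of Theorem \ref{lem2.5} essentially verbatim, using that every $C\in\textsf{C}$, regarded as a stalk complex, belongs to $\mathcal{C}_\textsf{I}$ by Lemma \ref{lem2.23} applied to the pair $(\textsf{A},\textsf{C})$ already produced. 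Translating the $\xi$-triangles furnished by $(\mathcal{C}_\textsf{I},\mathcal{B}_\textsf{I})$ into module short exact sequences via $\mathrm{H}^0$, and invoking the cohomological identifications of Lemmas \ref{lem2.1} and \ref{lem2.2}, yields the $\mathrm{Ext}^1$-vanishing, completeness, and hereditariness needed for $(\textsf{C},\textsf{B})$.

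The only nonformal point is the identification $\mathcal{C}_\textsf{P}=\mathcal{C}_\textsf{I}$; everything else is a routine reuse of Theorem \ref{lem2.5}. This identification is made possible precisely by $\textsf{C}$ being squeezed between two cotorsion pairs in $\mathrm{Mod}R$, which supplies the two-sided closure needed to push any semi-projective (resp. semi-injective) resolution of a $\mathcal{C}_\textsf{P}$- (resp. $\mathcal{C}_\textsf{I}$-) object into the common class $\mathrm{CE}\textrm{-}\textsf{C}$, at which point Lemma \ref{lem2.23} completes the picture.
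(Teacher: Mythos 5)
Your proposal is correct and follows the same overall strategy as the paper: reduce everything to the single identification $\mathcal{C}_\textsf{P}=\mathcal{C}_\textsf{I}$, obtain this from the two-sided closure of $\textsf{C}$ (it is squeezed between two hereditary cotorsion pairs), and then invoke Theorem~\ref{lem2.5} twice. The only divergence is in how $\mathcal{C}_\textsf{P}=\mathcal{C}_\textsf{I}$ is finished off: you chase the resolution complex into $\mathrm{CE}\textrm{-}\textsf{C}$ explicitly and then apply Lemma~\ref{lem2.23}, whereas the paper invokes Enochs' CE lifting result (\cite[Theorem 9.4]{E}) to get $\overline{\mathrm{Ext}}^1(I,J)=0$ between semi-injective resolutions and then passes to $\xi xt^1_\xi$-vanishing via Lemma~\ref{lem2.3}, identifying $\mathcal{C}_\textsf{P}$ with ${}^{\perp_\xi}\mathcal{B}_\textsf{I}$. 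Both routes are really recognizing that the resolution lies in $\mathrm{CE}\textrm{-}\textsf{C}$, so the mechanism is the same; your version just makes the closure chase visible rather than hiding it inside the citation to \cite[Theorem 9.4]{E}.
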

\begin{proof} It suffices to prove that $\mathcal{C}_\textsf{P}=\mathcal{C}_\textsf{I}$. Let $Z\in\mathcal{C}_\textsf{I}$ and $Y\in\mathcal{B}_\textsf{I}$, and let $Z\stackrel{\simeq}\rightarrow I$ and $Y\stackrel{\simeq}\rightarrow J$ be semi-injective resolutions, respectively. As $\textsf{A}^\bot=\textsf{C}={^\bot}\textsf{B}$, $\overline{\mathrm{Ext}}^1(I,J)=0$ by \cite[Theorem 9.4]{E}, it follows by Lemma \ref{lem2.3} that
$\xi xt^{1}_\xi(Z,Y)=0$, so $Z\in\mathcal{C}_\textsf{P}$. By dual arguments and Theorem \ref{lem2.5}, one has $\mathcal{C}_\textsf{I}=\mathcal{C}_\textsf{P}$. The converse follows by Theorem \ref{lem2.5}.
\end{proof}

\bigskip
\section{\bf Some examples of complete hereditary cotorsion pairs in $\mathrm{D}(R)$}
In this section, some examples of complete hereditary cotorsion pairs with respect to $\xi$ in  $\mathrm{D}(R)$ are given. As  consequences,
\cite[Theorem 1.1]{HZZ} can be applied to yield numerous model structures on $(\mathrm{D}(R,[1],\xi)$.

\subsection{\bf $\xi$-$\mathcal{G}$projective and $\xi$-$\mathcal{G}$injective objects in $\mathrm{D}(R)$}

Following \cite{EJ95}, an $R$-module $M$ is called \emph{Gorenstein projective} if there
exists a $\textrm{Hom}_R(-,\textsf{P})$-exact exact complex \begin{center}$P:\ \cdots\longrightarrow P^{-1}\longrightarrow P^0\longrightarrow P^1\longrightarrow\cdots$\end{center}of projective $R$-modules
such that $M\cong\textrm{Z}^0(P)$. The class of Gorenstein projective
$R$-modules is denoted by $\textsf{GP}$.
 Gorenstein injective
module is defined dually. The class of Gorenstein injective modules is denoted by
$\textsf{GI}$.

\begin{df}\label{lem:1.2} {\rm (\cite{AS}). A complex $X$ is called \emph{$\xi$-$\mathcal{G}$projective} (resp. \emph{$\xi$-$\mathcal{G}$injective}) if for each $t$, there is a cohomologically ghost triangle $X_{t+1}\rightarrow P_t\rightarrow X_t\stackrel{\nu_t}\rightarrow X_{t+1}[1]$ with $P_t\in\langle\textsf{P}\rangle_1$ (resp. $P_t\in\langle\textsf{I}\rangle_1$), such that
\begin{align}
0\rightarrow\mathrm{Hom}_{\mathrm{D}(R)}(X_{t},P)\rightarrow\mathrm{Hom}_{\mathrm{D}(R)}(P_t,P)
\rightarrow\mathrm{Hom}_{\mathrm{D}(R)}(X_{t+1},P)\rightarrow0
\label{exact03}
\tag{$\ast$}\end{align}
\begin{align}
(\textrm{resp.}\ 0\rightarrow\mathrm{Hom}_{\mathrm{D}(R)}(I,X_{t+1})\rightarrow\mathrm{Hom}_{\mathrm{D}(R)}(I,P_t)
\rightarrow\mathrm{Hom}_{\mathrm{D}(R)}(I,X_{t})\rightarrow0)
\label{exact03}
\tag{$\ast'$}\end{align}is exact in Ab  for all $P\in\langle\textsf{P}\rangle_1$ (resp. $I\in\langle\textsf{I}\rangle_1$) and $X\simeq X_0$. The symbol $\mathcal{GP}(\xi)$ (resp. $\mathcal{GI}(\xi)$) denote the full subcategory of $\xi$-$\mathcal{G}$projective (resp. $\xi$-$\mathcal{G}$injective) objects of $\mathrm{D}(R)$.

As $\mathrm{Hom}_{\mathrm{D}(R)}(-,-[s])\cong\mathrm{H}^s(\mathrm{RHom}_{R}(-,-))$ for $s\in\mathbb{Z}$, $(\ast)$ (resp. $(\ast')$) is exact if and only if $\mathrm{RHom}_R(\nu_t,P)$ (resp. $\mathrm{RHom}_R(I,\nu_t)$) is a ghost for all $P\in\langle\textsf{P}\rangle_1$ (resp. $I\in\langle\textsf{I}\rangle_1$).}
\end{df}

For two complexes $X$ and $Y$, Christensen, Foxby and Holm \cite[Proposition 2.5.8]{CFH} established a morphism
$$\mathrm{H}(\mathrm{Hom}_R(X,Y))\rightarrow\mathrm{Hom}_R(\mathrm{H}(X),\mathrm{H}(Y))$$ with $\eta^s_{X,Y}:\mathrm{H}^s(\mathrm{Hom}_R(X,Y))\rightarrow\prod_{i\in\mathbb{Z}}\mathrm{Hom}_R(\mathrm{H}^i(X),\mathrm{H}^{i+s}(Y))$ given by $[\alpha]\mapsto(\mathrm{H}^i(\alpha))_{i\in\mathbb{Z}}$. Let $T\stackrel{\simeq}\rightarrow X$ be a semi-projective resolution. For $s\in\mathbb{Z}$, one has a commutative diagram:
\begin{center} $\xymatrix@C=30pt@R=16pt{
\mathrm{H}^{s}(\mathrm{RHom}_{R}(X,Y))\ar[d]^\cong\ar@{.>}[r]^{\rho^s_{X,Y}\ \ \ \ \ \ }&\prod_{i\in\mathbb{Z}}\mathrm{Hom}_R(\mathrm{H}^i(X),\mathrm{H}^{i+s}(Y))\ar[d]^\cong\\
 \mathrm{H}^{s}(\mathrm{Hom}_{R}(T,Y))\ar[r]^{\eta^s_{X,Y}\ \ \ \ \ \ \ }&\prod_{i\in\mathbb{Z}}\mathrm{Hom}_R(\mathrm{H}^i(T),\mathrm{H}^{i+s}(Y))}$
\end{center}

For any ring $R$, $(\textsf{GP},\textsf{GP}^{\bot})$ is a hereditary cotorsion pair in $\mathrm{Mod}R$ by \cite[Corollary 3.4(1)]{CIS}. The following proposition and lemma \ref{lem2.1} show that $\mathcal{GP}(\xi)=\mathcal{GP}_\textsf{P}$.

\begin{prop}\label{lem1.3} Let $X\in\mathrm{D}(R)$. The following statements are equivalent:

$(1)$ $X$ is $\xi$-$\mathcal{G}$projective in $\mathrm{D}(R)$;

$(2)$ $\mathrm{H}^{-s}(X)\in\textsf{GP}$ and $\mathrm{H}^{s}(\mathrm{RHom}_{R}(X,B))\stackrel{\cong}\rightarrow\mathrm{Hom}_{R}(\mathrm{H}^{-s}(X),B)$ for all $B\in\textsf{GP}^\perp$ and $s\in\mathbb{Z}$.
\end{prop}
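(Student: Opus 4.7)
The plan is to reduce the proposition to verifying that $\mathcal{GP}(\xi) = \mathcal{GP}_{\textsf{P}}$, after which Lemma~\ref{lem2.1}, applied to the hereditary cotorsion pair $(\textsf{GP}, \textsf{GP}^{\perp})$ in $\mathrm{Mod}R$, handles everything. Indeed, substituting $-s$ for $s$ in Lemma~\ref{lem2.1}(3) with $\textsf{A} = \textsf{GP}$ and $\textsf{B} = \textsf{GP}^{\perp}$ rewrites that condition exactly as condition (2) of the present proposition. Hence (2) is equivalent to $X \in \mathcal{GP}_{\textsf{P}}$, and the whole proposition will follow once the equality $\mathcal{GP}(\xi) = \mathcal{GP}_{\textsf{P}}$ is established.

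For (1) $\Rightarrow$ $X \in \mathcal{GP}_{\textsf{P}}$, I would start from the doubly-infinite tower of cohomologically ghost triangles $X_{t+1} \to P_t \to X_t \to X_{t+1}[1]$ with $P_t \in \langle\textsf{P}\rangle_1$ and $X \simeq X_0$ provided by Definition~\ref{lem:1.2}, and apply $\mathrm{H}^{-s}$ for each fixed $s \in \mathbb{Z}$. By Remark~\ref{lem:2.2}(2) each of these triangles yields a short exact sequence of $R$-modules; moreover, because $P_t$ is a retract of a coproduct of shifts of projective modules, every $\mathrm{H}^{-s}(P_t)$ is projective. Splicing over $t \in \mathbb{Z}$ produces a doubly-infinite exact complex of projectives in which $\mathrm{H}^{-s}(X)$ appears as a cycle. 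To conclude $\mathrm{H}^{-s}(X) \in \textsf{GP}$ I need $\mathrm{Hom}_R(-, Q)$-exactness of this spliced complex for every projective $Q$; this follows by translating the derived-category Hom-exactness $(\ast)$ into the module-level vanishing $\mathrm{Ext}^1_R(\mathrm{H}^{-s}(X_t), Q) = 0$ via the long exact sequence of Lemma~\ref{lem4.1}. Lemma~\ref{lem2.1} then places $X$ in $\mathcal{GP}_{\textsf{P}}$ and simultaneously delivers the Hom identification asserted in (2).

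For $X \in \mathcal{GP}_{\textsf{P}}$ $\Rightarrow$ (1), I would fix a semi-projective resolution $P \xrightarrow{\simeq} X$ with every $\mathrm{C}^s(P) \in \textsf{GP}$ (Lemma~\ref{lem2.1}(4)) and assemble the required tower. On one side of the tower, the hard truncations of $P$ together with the triangles they generate in $\mathrm{D}(R)$ produce iterated triangles $X_{t+1} \to P_t \to X_t$ with $P_t \in \mathrm{CE}\textrm{-}\textsf{P} \subset \langle\textsf{P}\rangle_1$. On the other side, a complete projective resolution of each Gorenstein projective cokernel $\mathrm{C}^s(P)$ extends the picture; these completions can be assembled componentwise into complexes that lie in $\langle\textsf{P}\rangle_1$. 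At every stage, the Hom-exactness condition $(\ast)$ reduces via Lemma~\ref{lem4.1} to $\mathrm{Ext}^1_R(\mathrm{C}^s(P), Q) = 0$ for $Q \in \textsf{P}$, which is automatic because $\mathrm{C}^s(P) \in \textsf{GP}$.

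The hardest step will be the bookkeeping for the dictionary between (a) the derived-category Hom-exactness $(\ast)$ against every $P \in \langle\textsf{P}\rangle_1$ and (b) classical $\mathrm{Hom}_R(-, \textsf{P})$-exactness of a two-sided projective complex in $\mathrm{Mod}R$. Lemma~\ref{lem4.1} is the key mediator between these two forms of vanishing, but it must be invoked compatibly across all syzygies and in both directions of the tower.
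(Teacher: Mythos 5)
Your overall framing — reduce to proving $\mathcal{GP}(\xi)=\mathcal{GP}_{\textsf{P}}$ and then feed that equality into Lemma~\ref{lem2.1} for the hereditary cotorsion pair $(\textsf{GP},\textsf{GP}^{\perp})$ — is exactly what the paper does (it states this equality immediately before the proposition), and your outline of the direction $(2)\Rightarrow(1)$, via truncations of a semi-projective resolution together with complete resolutions of its cokernels, is in the same spirit as the paper's construction of the tower.

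The direction $(1)\Rightarrow(2)$, however, has a genuine gap. The paper's argument here has two pieces. First, it establishes that the comparison map $\rho^{s}_{X,B}\colon \mathrm{H}^{s}(\mathrm{RHom}_{R}(X,B))\rightarrow\mathrm{Hom}_{R}(\mathrm{H}^{-s}(X),B)$ is an isomorphism for $B\in\textsf{GP}^{\perp}$: taking one cohomologically ghost triangle $X_{1}\rightarrow P_{0}\rightarrow X\rightarrow X_{1}[1]$ it forms a commutative ladder and uses Lemma~\ref{lem4.1}(1.1) to see that $\rho^{s}_{P_{0},B}$ is invertible because $P_{0}\in\langle\textsf{P}\rangle_{1}$. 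Only \emph{after} that does it splice the modules $\mathrm{H}^{-s}(P_{t})$ into a complex $\mathbb{P}$ and use those isomorphisms to transfer the exactness of $(\ast)$ to exactness of $\mathrm{Hom}_{R}(\mathbb{P},P)$ for $P\in\textsf{P}$. Your sketch jumps straight to the splicing and asserts that $(\ast)$ ``translates via Lemma~\ref{lem4.1} into the module-level vanishing $\mathrm{Ext}^{1}_{R}(\mathrm{H}^{-s}(X_{t}),Q)=0$,'' but the error terms in Lemma~\ref{lem4.1} are the groups $\mathrm{Ext}^{\geq1}_{R}(\mathrm{C}^{s}(T_{t}),Q)$ for a semi-projective resolution $T_{t}\rightarrow X_{t}$, not $\mathrm{Ext}^{1}_{R}(\mathrm{H}^{-s}(X_{t}),Q)$, so that translation does not go through as written; the bridge is precisely the comparison isomorphism. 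More seriously, even once $\mathrm{H}^{-s}(X)\in\textsf{GP}$ is known for all $s$, you cannot then ``apply Lemma~\ref{lem2.1}'' to conclude $X\in\mathcal{GP}_{\textsf{P}}$: condition (3) of that lemma requires \emph{both} the cohomology membership \emph{and} the Hom-identification as hypotheses, and Lemma~\ref{lem2.1} gives no way to derive the latter from the former. The isomorphism $\rho^{s}_{X,B}$ must be proved separately, and that is the step your outline omits.
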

\begin{proof} (1) $\Rightarrow$ (2) By assumption, one has a cohomologically ghost triangle $X_1\rightarrow P_0\rightarrow X\stackrel{\nu}\rightarrow X_1[1]$ in $\mathrm{D}(R)$ with $P_0\in\langle\textsf{P}\rangle_1$ and $X_1\in\mathcal{GP}(\xi)$, such that $\mathrm{RHom}_{R}(\nu,Q)$ is a ghost for $Q\in\langle\textsf{P}\rangle_1$. For $B\in\textsf{GP}^\perp$ and $s\in\mathbb{Z}$, we have a commutative diagram:
\begin{center} $\xymatrix@C=16pt@R=16pt{
 0\ar[r]& \mathrm{H}^s(\mathrm{RHom}_{R}(X,B)) \ar[d]^{\rho^{s}_{X,B}}\ar[r]& \mathrm{H}^s(\mathrm{RHom}_{R}(P_0,B))\ar[d]^{\rho^{s}_{P_0,B}}\ar[r] &\mathrm{H}^s(\mathrm{RHom}_{R}(X_1,B))\ar[d]^{\rho^{s}_{X_1,B}}\ar[r]&0 \\
0\ar[r]& \mathrm{Hom}_{R}(\mathrm{H}^{-s}(X),B)\ar[r]& \mathrm{Hom}_{R}(\mathrm{H}^{-s}(P_0),B)\ar[r] &\mathrm{Hom}_{R}(\mathrm{H}^{-s}(X_1),B)&}$
\end{center}By Lemma \ref{lem4.1}(1.1), $\rho^{s}_{P_0,B}$ is an isomorphism, so is $\rho^{s}_{X,B}$. Also
for $t\in\mathbb{Z}$, there is a cohomologically ghost triangle $X_{t+1}\rightarrow P_t\rightarrow X_t\stackrel{\nu_t}\rightarrow X_{t+1}[1]$ in $\mathrm{D}(A)$ with $P_t\in\langle\textsf{P}\rangle_1$ and $X\simeq X_0$ , such that $\mathrm{Hom}_{\mathrm{D}(A)}(\nu_t,Q)=0$ for $Q\in\langle\textsf{P}\rangle_1$. So one has an exact sequence of projective $R$-modules $$\mathbb{P}:\cdots\rightarrow\mathrm{H}^{-s}(P_1)\rightarrow\mathrm{H}^{-s}(P_0)\rightarrow \mathrm{H}^{-s}(P_{-1})\rightarrow\cdots$$  such that $\mathrm{Hom}_R(\mathbb{P},P)$ is exact for all $P\in\textsf{P}$ by the above isomorphism. Hence $\mathrm{H}^{-s}(X)\cong\mathrm{Coker}(\mathrm{H}^{-s}(P_1)\rightarrow\mathrm{H}^{-s}(P_0))\in\textsf{GP}$ for all $s\in\mathbb{Z}$.

 (2) $\Rightarrow$ (1) As each $\mathrm{H}^{-s}(X)$ is Gorenstein projective, one has a $\mathrm{Hom}_R(-,\textsf{P})$-exact exact sequence of left $R$-modules $$0\rightarrow\mathrm{H}^{-s}(X)\stackrel{f^{-s}}\rightarrow P_{-1_{-s}}\rightarrow X_{-1_{-s}}\rightarrow0$$  with $P_{-1_{-s}}\in\textsf{P}$ and $X_{-1_{-s}}\in\textsf{GP}$. As $$\mathrm{Hom}_{\mathrm{D}(R)}(X,P_{-1_{-s}}[s])\cong\mathrm{H}^{s}(\mathrm{RHom}_{R}(X,P_{-1_{-s}}))\cong\mathrm{Hom}_{R}(\mathrm{H}^{-s}(X),P_{-1_{-s}}),$$ there exists $g^{-s}:X\rightarrow P_{-1_{-s}}[s]$ in $\mathrm{D}(R)$ such that $\mathrm{H}^{-s}(g^{-s})=f^{-s}$. Set $P_{-1}=\bigoplus_{s\in\mathbb{Z}}P_{-1_{-s}}[s]=\prod_{s\in\mathbb{Z}}P_{-1_{-s}}[s]$. One has a cohomologically ghost triangle $$X\rightarrow P_{-1}\rightarrow X_{-1}\stackrel{\nu_{-1}}\rightarrow X[1]$$ in $\mathrm{D}(R)$, so that $\mathrm{H}^{-s}(X_{-1})\cong X_{-1_{-s}}$. For $P\in\textsf{P}$,
we have a commutative diagram:
\begin{center} $\xymatrix@C=20pt@R=20pt{
 & \mathrm{H}^{s}(\mathrm{RHom}_{R}(X_{-1},P)) \ar[d]\ar[r]& \mathrm{H}^{s}(\mathrm{RHom}_{R}(P_{-1},P))\ar[d]^\cong\ar[r] &\mathrm{H}^{s}(\mathrm{RHom}_{R}(X,P))\ar[d]^\cong \\
0\ar[r]& \mathrm{Hom}_{R}(\mathrm{H}^{-s}(X_{-1}),P)\ar[r]& \mathrm{Hom}_{R}(\mathrm{H}^{-s}(P_{-1}),P)\ar[r] &\mathrm{Hom}_{R}(\mathrm{H}^{-s}(X),P)\ar[r]&0 }$
\end{center}it implies that the upper row is exact and $\mathrm{H}^{s}(\mathrm{RHom}_{R}(X_{-1},P))\cong\mathrm{Hom}_{R}(\mathrm{H}^{-s}(X_{-1}),P)$. On the other hand, consider the cohomologically ghost triangle $X_1\rightarrow P_{0}\rightarrow X\stackrel{\nu_0}\rightarrow X[1]$ in $\mathrm{D}(R)$ with $P_{0}\in\langle\textsf{P}\rangle_1$. For $P\in\textsf{P}$ and $s\in\mathbb{Z}$, we have a commutative diagram:\begin{center} $\xymatrix@C=20pt@R=20pt{
 & \mathrm{H}^{s}(\mathrm{RHom}_{R}(X,P)) \ar[d]^\cong\ar[r]& \mathrm{H}^{s}(\mathrm{RHom}_{R}(P_{0},P))\ar[d]^\cong\ar[r] &\mathrm{H}^{s}(\mathrm{RHom}_{R}(X_1,P))\ar[d] \\
0\ar[r]& \mathrm{Hom}_{R}(\mathrm{H}^{-s}(X),P)\ar[r]& \mathrm{Hom}_{R}(\mathrm{H}^{-s}(P_{0}),P)\ar[r] &\mathrm{Hom}_{R}(\mathrm{H}^{-s}(X_1),P)\ar[r]&0 }$
\end{center}Then $\mathrm{RHom}_R(\nu_0,P)$ is a ghost, it implies that $\mathrm{H}^{s}(\mathrm{RHom}_{R}(X_1,P))\cong\mathrm{Hom}_{R}(\mathrm{H}^{-s}(X_1),P)$.
Continuing this process, one has $X$ is $\xi$-$\mathcal{G}$projective in $\mathrm{D}(R)$.
\end{proof}

For any ring $R$, $({^\bot}\textsf{GI},\textsf{GI})$ is a complete hereditary cotorsion pair in $\mathrm{Mod}R$ by \cite[Theorem 5.6]{SS}. By dual arguments, one can prove that $\mathcal{GI}(\xi)=\mathcal{GI}_\textsf{I}$.

\begin{prop}\label{lem1.4} Let $Y\in\mathrm{D}(R)$. The following statements are equivalent:

$(1)$ $Y$ is $\xi$-$\mathcal{G}$injective in $\mathrm{D}(R)$;

$(2)$ $\mathrm{H}^{s}(Y)\in\textsf{GI}$ and $\mathrm{H}^{s}(\mathrm{RHom}_{R}(A,Y))\stackrel{\cong}\rightarrow\mathrm{Hom}_{R}(A,\mathrm{H}^{s}(Y))$ for all $A\in{^\perp}\textsf{GI}$ and $s\in\mathbb{Z}$.
\end{prop}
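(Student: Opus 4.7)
The plan is to mirror the proof of Proposition \ref{lem1.3} under the natural duality: $\langle\textsf{P}\rangle_1$ is replaced by $\langle\textsf{I}\rangle_1$, the hypothesis $B\in\textsf{GP}^{\perp}$ by $A\in{^\perp}\textsf{GI}$, invocations of Lemma \ref{lem4.1}(1.1) by Lemma \ref{lem4.1}(1.2), and contravariant Hom-arguments on the first variable by covariant Hom-arguments on the second. Two observations make this transfer go through cleanly. First, any $A\in{^\perp}\textsf{GI}$ satisfies $\mathrm{Ext}^{\geq 1}_R(A,\textsf{I})=0$ because $\textsf{I}\subseteq\textsf{GI}$, which is exactly the hypothesis required by Lemma \ref{lem4.1}(1.2) with $\textsf{X}=\textsf{I}$ to upgrade the comparison $\rho^s_{A,C}\colon\mathrm{H}^s(\mathrm{RHom}_R(A,C))\to\mathrm{Hom}_R(A,\mathrm{H}^s(C))$ into an isomorphism whenever $C\in\mathrm{K}(\textsf{I})\supseteq\langle\textsf{I}\rangle_1$. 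Second, every injective module $J$ lies in ${^\perp}\textsf{GI}$, since $\mathrm{Ext}^{\geq 1}_R(J,Z)$ for $Z\in\textsf{GI}$ can be computed from the $\mathrm{Hom}_R(\textsf{I},-)$-exact complete injective resolution of $Z$ and hence vanishes.

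For (1)$\Rightarrow$(2), I would extract from the $\xi$-$\mathcal{G}$injective data the cohomologically ghost triangles $Y_{t+1}\to I_t\to Y_t\stackrel{\nu_t}\to Y_{t+1}[1]$ with $I_t\in\langle\textsf{I}\rangle_1$, $Y_0\simeq Y$, and condition $(\ast')$. For fixed $A\in{^\perp}\textsf{GI}$ and $s\in\mathbb{Z}$, applying $\mathrm{RHom}_R(A,-)$ to $Y_1\to I_0\to Y\to Y_1[1]$ and placing it above the canonical short exact sequence $0\to\mathrm{Hom}_R(A,\mathrm{H}^s(Y_1))\to\mathrm{Hom}_R(A,\mathrm{H}^s(I_0))\to\mathrm{Hom}_R(A,\mathrm{H}^s(Y))\to 0$ via the $\rho^s_{A,-}$, Lemma \ref{lem4.1}(1.2) makes the middle vertical $\rho^s_{A,I_0}$ an isomorphism, condition $(\ast')$ forces the top row to be short exact, and a diagram chase yields the claimed isomorphism $\rho^s_{A,Y}$. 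For the Gorenstein injectivity of $\mathrm{H}^s(Y)$, the cohomologically ghost property collapses each triangle into a short exact sequence $0\to\mathrm{H}^s(Y_{t+1})\to\mathrm{H}^s(I_t)\to\mathrm{H}^s(Y_t)\to 0$ of modules with injective middle term; splicing for $t\in\mathbb{Z}$ yields a bi-infinite exact complex of injectives with $\mathrm{H}^s(Y)$ as a cocycle. Testing $\mathrm{Hom}_R(J,-)$-exactness for injective $J$ is then reduced to $(\ast')$ with $I=J[-s]$, using $J\in{^\perp}\textsf{GI}$ and Lemma \ref{lem4.1}(1.2) to identify $\mathrm{Hom}_{\mathrm{D}(R)}(J[-s],-)$ with $\mathrm{Hom}_R(J,\mathrm{H}^s(-))$ on the relevant complexes.

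For (2)$\Rightarrow$(1), each $\mathrm{H}^s(Y)\in\textsf{GI}$ yields both a $\mathrm{Hom}_R(\textsf{I},-)$-exact surjection $J_0^s\twoheadrightarrow\mathrm{H}^s(Y)$ and a $\mathrm{Hom}_R(\textsf{I},-)$-exact injection $\mathrm{H}^s(Y)\hookrightarrow J_{-1}^s$ with $J_0^s,J_{-1}^s\in\textsf{I}$, coming from the complete injective resolution. I would lift each surjection to a morphism $J_0^s[-s]\to Y$ in $\mathrm{D}(R)$ via the RHom-isomorphism of (2) applied with $A=J_0^s\in{^\perp}\textsf{GI}$, assemble over $s$ into $I_0:=\bigoplus_s J_0^s[-s]\in\langle\textsf{I}\rangle_1$, and complete to a triangle $Y_1\to I_0\to Y\to Y_1[1]$. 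Dually, I would lift each injection via $\mathrm{Hom}_{\mathrm{D}(R)}(Y,J_{-1}^s[-s])\cong\mathrm{Hom}_R(\mathrm{H}^s(Y),J_{-1}^s)$ supplied by Lemma \ref{lem4.1}(1.1) with the injective second argument $J_{-1}^s$ (where $\mathrm{Ext}^{\geq 1}_R(-,J_{-1}^s)=0$ is automatic), assemble into $I_{-1}:=\bigoplus_s J_{-1}^s[-s]$, and complete to $Y\to I_{-1}\to Y_{-1}\to Y[1]$. Both triangles are cohomologically ghost by construction; verifying $(\ast')$ reduces, after decomposing $I\in\langle\textsf{I}\rangle_1$ into shifted injectives, to the vanishing of certain connecting maps on cohomology, which is automatic. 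A diagram chase dualizing the (1)$\Rightarrow$(2) direction shows $Y_{\pm 1}$ again satisfy condition (2), so iterating this construction in both directions produces the full family of triangles required by Definition \ref{lem:1.2}.

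The main obstacle will be the bookkeeping in this iteration, namely verifying that each intermediate $Y_t$ inherits both the Gorenstein injectivity of its cohomology and the RHom-isomorphism of (2), so that the induction remains legal. A secondary point is the mild asymmetry in (2)$\Rightarrow$(1) between the positive-index triangles (which use the RHom-isomorphism of (2) directly) and the negative-index ones (which use Lemma \ref{lem4.1}(1.1) with an injective second argument); both nevertheless feed uniformly into the same cohomologically ghost pattern, so the resulting structure is symmetric.
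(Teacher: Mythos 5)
Your proposal is correct and is essentially the paper's own proof: for Proposition \ref{lem1.4} the paper simply writes ``By dual arguments, one can prove that $\mathcal{GI}(\xi)=\mathcal{GI}_\textsf{I}$'' immediately before the statement, so the intended argument is precisely the dualization of Proposition \ref{lem1.3} that you carry out, and you have correctly identified the two facts that make the duality run smoothly (namely that $\mathrm{Ext}^{\geq 1}_R(A,\textsf{I})=0$ is automatic, and that $\textsf{I}\subseteq{^\perp}\textsf{GI}$ via the $\mathrm{Hom}_R(\textsf{I},-)$-exactness of complete injective resolutions).

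One small stylistic difference: in the dual of the paper's $(2)\Rightarrow(1)$ step you build both triangles $Y_1\to I_0\to Y\to Y_1[1]$ and $Y\to I_{-1}\to Y_{-1}\to Y[1]$ by lifting, respectively, a $\mathrm{Hom}_R(\textsf{I},-)$-exact cover $J_0^s\twoheadrightarrow\mathrm{H}^s(Y)$ and a $\mathrm{Hom}_R(\textsf{I},-)$-exact embedding $\mathrm{H}^s(Y)\hookrightarrow J_{-1}^s$; the paper's own proof of Proposition \ref{lem1.3} instead treats only the ``hard'' direction explicitly (the one that needs condition $(2)$, there the embedding $X\to P_{-1}$, here the cover $I_0\to Y$) and obtains the ``easy'' direction from Remark \ref{lem:2.2}(4), which already provides an embedding $Y\to I$ with $I\in\langle\textsf{I}\rangle_1$ by the CE-injective coresolution. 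Either route works; yours is a touch more symmetric, the paper's a touch shorter. Also note that the lift $\mathrm{Hom}_{\mathrm{D}(R)}(Y,J_{-1}^s[-s])\cong\mathrm{Hom}_R(\mathrm{H}^s(Y),J_{-1}^s)$ does not really require Lemma \ref{lem4.1}(1.1): since $J_{-1}^s$ is injective it is semi-injective, so $\mathrm{RHom}_R(Y,J_{-1}^s)\simeq\mathrm{Hom}_R(Y,J_{-1}^s)$, and exactness of $\mathrm{Hom}_R(-,J_{-1}^s)$ already commutes past cohomology. The bookkeeping worry you flag at the end is handled exactly as in the paper's proof of Proposition \ref{lem1.3}: the weaker form of condition $(2)$ restricted to testing against the relevant (co)generators is what the diagram chase actually propagates along the tower, and that suffices to continue the iteration.
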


\subsection{\bf Projectively coresolved $\xi$-$\mathcal{G}$flat objects in $\mathrm{D}(R)$}

Following \cite{SS}, an $R$-module $M$ is called \emph{projectively coresolved Gorenstein flat} if there
exists an $\textsf{I}\otimes_R-$-exact exact complex \begin{center}$P:\ \cdots\longrightarrow P^{-1}\longrightarrow P^0\longrightarrow P^1\longrightarrow\cdots$\end{center}of projective $R$-modules
such that $M\cong\textrm{Z}^0(P)$. The class of projectively coresolved Gorenstein flat
left $R$-modules is denoted by $\textsf{PGF}$.

\begin{df}\label{df:3.1} {\rm A complex $X$ in $\mathrm{D}(R)$ is called \emph{projectively coresolved $\xi$-$\mathcal{G}$flat} if for each $t$, there is a cohomologically ghost triangle $X_{t+1}\rightarrow P_t\rightarrow X_t\stackrel{\nu_t}\rightarrow X_{t+1}[1]$ in $\mathrm{D}(R)$ with $P_t\in\langle\textsf{P}\rangle_1$, such that $X\simeq X_0$ and $$I\otimes_R^\mathrm{L}X_{t+1}\longrightarrow I\otimes_R^\mathrm{L}P_t\longrightarrow I\otimes_R^\mathrm{L}X_t\stackrel{I\otimes^\mathrm{L}_R\nu_t}\longrightarrow I\otimes_R^\mathrm{L}X_{t+1}[1]$$ is a cohomologically ghost triangle in Ab for all $I\in\langle\textsf{I}\rangle_1$.  The symbol $\mathcal{PGF}(\xi)$  denote the full subcategory of projectively coresolved $\xi$-$\mathcal{G}$flat objects of $\mathrm{D}(R)$.}
\end{df}

For any ring $R$, $(\textsf{PGF},\textsf{PGF}^\bot)$ is a complete hereditary cotorsion pair in $\mathrm{Mod}R$ by \cite[Theorem 4.9]{SS}. The following lemma shows that $\mathcal{PGF}(\xi)=\mathcal{PGF}_\textsf{P}$.

\begin{prop}\label{lem1.1} Let $X\in\mathrm{D}(R)$. The following statements are equivalent:

$(1)$ $X$ is projectively coresolved $\xi$-$\mathcal{G}$flat in $\mathrm{D}(R)$;

$(2)$ For any semi-projective resolution $T\stackrel{\simeq}\rightarrow X$, $\mathrm{C}^{s}(T)\in\textsf{PGF}$ for all $s\in\mathbb{Z}$.
\end{prop}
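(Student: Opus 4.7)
The plan is to parallel Proposition \ref{lem1.3}, replacing the functor $\mathrm{RHom}_R(-,P)$ and test class $\langle\textsf{P}\rangle_1$ by $-\otimes_R^\mathrm{L}I$ and $\langle\textsf{I}\rangle_1$, and using the cotorsion pair $(\textsf{PGF},\textsf{PGF}^\bot)$ of \cite[Theorem 4.9]{SS} in place of $(\textsf{GP},\textsf{GP}^\bot)$. By Theorem \ref{lem2.5} and Lemma \ref{lem2.1}, condition (2) is equivalent to $X\in\mathcal{PGF}_\textsf{P}$, so the task reduces to matching the categorical definition of $\mathcal{PGF}(\xi)$ with the characterizations of $\mathcal{PGF}_\textsf{P}$ supplied by Lemma \ref{lem2.1}.

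For (1) $\Rightarrow$ (2), I would first show each $\mathrm{H}^{-s}(X)\in\textsf{PGF}$. Applying $\mathrm{H}^{-s}$ to the cohomologically ghost triangles $X_{t+1}\to P_t\to X_t\xrightarrow{\nu_t} X_{t+1}[1]$ yields short exact sequences $0\to\mathrm{H}^{-s}(X_{t+1})\to\mathrm{H}^{-s}(P_t)\to\mathrm{H}^{-s}(X_t)\to 0$ with $\mathrm{H}^{-s}(P_t)\in\textsf{P}$ by Lemma \ref{lem0.0'}(1); splicing these over $t\in\mathbb{Z}$ produces a totally acyclic complex $\mathbb{P}^{(s)}$ of projective modules with $\mathrm{H}^{-s}(X)$ as a cocycle. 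For any injective $R$-module $I$ (placed in degree $0$, so $I\in\langle\textsf{I}\rangle_1$), the identification $\mathrm{H}^{-s}(I\otimes_R^\mathrm{L}P_t)\cong I\otimes_R\mathrm{H}^{-s}(P_t)$, combined with the tensor-ghost property of $\nu_t$, implies that $I\otimes_R\mathbb{P}^{(s)}$ is exact; hence $\mathrm{H}^{-s}(X)\in\textsf{PGF}$. To deduce $X\in\mathcal{PGF}_\textsf{P}$, it suffices by Lemma \ref{lem2.1}(2) to show $\xi xt^1_\xi(X,Y)=0$ for every $Y\in\mathrm{CE}\textrm{-}\textsf{PGF}^\bot$. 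The truncation of the given triangles for $t\geq 0$ forms a $\xi$-projective resolution $\cdots\to P_1\to P_0\to X$ of $X$, and the decomposition $P_t\simeq\bigoplus_n\mathrm{H}^n(P_t)[-n]$ in $\mathrm{D}(R)$ together with the projectivity of each summand yields
\[
\mathrm{Hom}_{\mathrm{D}(R)}(P_t,Y)\cong\prod_n\mathrm{Hom}_R(\mathrm{H}^n(P_t),\mathrm{H}^n(Y)).
\]
Taking cohomology of the resulting cochain complex gives $\xi xt^i_\xi(X,Y)\cong\prod_n\mathrm{Ext}^i_R(\mathrm{H}^n(X),\mathrm{H}^n(Y))$, which vanishes for $i\geq 1$ since $\mathrm{H}^n(X)\in\textsf{PGF}$, $\mathrm{H}^n(Y)\in\textsf{PGF}^\bot$, and the cotorsion pair is hereditary.

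For (2) $\Rightarrow$ (1), Lemma \ref{lem2.1} applied to $(\textsf{PGF},\textsf{PGF}^\bot)$ supplies $\mathrm{H}^{-s}(X)\in\textsf{PGF}$ and the comparison isomorphism $\mathrm{Hom}_{\mathrm{D}(R)}(X,M[s])\cong\mathrm{Hom}_R(\mathrm{H}^{-s}(X),M)$ for every $M\in\textsf{PGF}^\bot$. For each $s$, the PGF-structure of $\mathrm{H}^{-s}(X)$ provides an $\textsf{I}\otimes_R-$-exact short exact sequence $0\to\mathrm{H}^{-s}(X)\to P_{-1_{-s}}\to X_{-1_{-s}}\to 0$ with $P_{-1_{-s}}\in\textsf{P}$ and $X_{-1_{-s}}\in\textsf{PGF}$; the comparison isomorphism lifts the embedding to a morphism $X\to P_{-1_{-s}}[s]$ in $\mathrm{D}(R)$. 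Assembling these over $s$ and setting $P_{-1}:=\bigoplus_s P_{-1_{-s}}[s]\in\langle\textsf{P}\rangle_1$, the cone yields a cohomologically ghost triangle $X\to P_{-1}\to X_{-1}\to X[1]$ with $\mathrm{H}^{-s}(X_{-1})\cong X_{-1_{-s}}\in\textsf{PGF}$, and the $\textsf{I}\otimes_R-$-exactness of each module-level short exact sequence lifts to the tensor-ghost condition on this triangle (using that $P_{-1}$ decomposes in $\mathrm{D}(R)$ as a sum of shifted projective modules, so that $I\otimes_R^\mathrm{L}P_{-1}$ involves no higher Tor). Iterating in both directions produces the full sequence of cohomologically ghost triangles witnessing that $X$ is projectively coresolved $\xi$-$\mathcal{G}$flat.

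The main obstacle is verifying the product formula $\mathrm{Hom}_{\mathrm{D}(R)}(P_t,Y)\cong\prod_n\mathrm{Hom}_R(\mathrm{H}^n(P_t),\mathrm{H}^n(Y))$ and its compatibility with the differentials of the $\xi$-projective resolution, so that $\xi xt^i_\xi(X,Y)$ genuinely splits as $\prod_n\mathrm{Ext}^i_R(\mathrm{H}^n(X),\mathrm{H}^n(Y))$; this requires careful use of the splitting $P_t\simeq\bigoplus_n\mathrm{H}^n(P_t)[-n]$ in $\mathrm{D}(R)$ together with the exactness of $\mathrm{Hom}_R(P,-)$ for projective $P$.
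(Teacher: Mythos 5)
Your plan treats the PGF case as a straight transliteration of the $\textsf{GP}$ argument in Proposition~\ref{lem1.3}, working with the cohomology modules $\mathrm{H}^{-s}(X)$ in $\mathrm{D}(R)$. But the paper deliberately switches strategy for the PGF case and argues with the cokernels $\mathrm{C}^s(T)$ of a semi-projective resolution, building genuine CE exact sequences in $\mathrm{C}(\textsf{P})$; the reason is precisely the point your proposal elides.

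In your step $(2)\Rightarrow(1)$ you take the module-level $\textsf{I}\otimes_R-$-exact sequence $0\to\mathrm{H}^{-s}(X)\to P_{-1_{-s}}\to X_{-1_{-s}}\to 0$ and claim it ``lifts to the tensor-ghost condition'' on the triangle $X\to P_{-1}\to X_{-1}\to X[1]$. What you actually need is that $\mathrm{H}^n(I\otimes_R^\mathrm{L}X)\to\mathrm{H}^n(I\otimes_R^\mathrm{L}P_{-1})$ is injective for every injective $I$ and every $n$, and similarly at each subsequent stage $X_{t}$ of the resolution. You handle the $P_{-1}$ side correctly ($P_{-1}$ splits with projective cohomology, so $\mathrm{H}^n(I\otimes_R^\mathrm{L}P_{-1})\cong I\otimes_R P_{-1_n}$), but you silently identify $\mathrm{H}^n(I\otimes_R^\mathrm{L}X)$ with $I\otimes_R\mathrm{H}^n(X)$ and $\mathrm{H}^n(I\otimes_R^\mathrm{L}X_{-1})$ with $I\otimes_R\mathrm{H}^n(X_{-1})$. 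This is false for a general complex: $\mathrm{H}^n(I\otimes_R^\mathrm{L}-)$ is controlled by a hyper-Tor spectral sequence, not by tensoring cohomology. It does happen to hold here, but only because the subquotients $\mathrm{B}^s(T),\mathrm{Z}^s(T),\mathrm{C}^s(T),\mathrm{H}^s(T)$ all land in $\textsf{PGF}\subseteq\textsf{GF}$, and Gorenstein flat modules have $\mathrm{Tor}^R_{\ge1}(I,-)=0$ for injective $I$; moreover you must then propagate this vanishing inductively to $X_{-1}, X_{-2},\ldots$. None of this is in your write-up, and it is exactly the work the paper avoids by never leaving the chain level: the paper produces a CE exact sequence $0\to T\to\mathrm{cn}(\mathrm{id}_T)\oplus P_{-1}\to T_{-1}\to 0$ in $\mathrm{C}(\textsf{P})$ and verifies $\textsf{I}\otimes_R-$-exactness on the $\mathrm{C}^s$'s directly, for which $I\otimes_R\mathrm{C}^s(-)\cong\mathrm{C}^s(I\otimes_R-)$ is a formal identity (cf.\ the adjunction used after \cite[Lemma~3.1]{G}), not a vanishing theorem.

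Your direction $(1)\Rightarrow(2)$ is a genuinely different (and in spirit attractive) route: you splice the cohomology sequences to get $\mathrm{H}^{-s}(X)\in\textsf{PGF}$, then prove $\xi xt^i_\xi(X,Y)\cong\prod_n\mathrm{Ext}^i_R(\mathrm{H}^n(X),\mathrm{H}^n(Y))$ using the canonical splitting $P_t\simeq\bigoplus_n\mathrm{H}^n(P_t)[-n]$ and the vanishing of off-diagonal $\mathrm{Ext}$ (which forces the differentials of the $\xi$-projective resolution to be diagonal), and then invoke Lemma~\ref{lem2.1}(2). The product formula is correct and the diagonal-differential observation does hold, so this direction can be made to work; but it is not the paper's argument — the paper instead replaces the abstract triangles by CE exact sequences of semi-projective complexes (via cones and adding contractibles, and the degreewise-split trick), and then extracts the conclusion $\mathrm{C}^s(T_0)\in\textsf{PGF}$ directly. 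The paper's route is more uniform: it works equally for $\mathcal{GF}(\xi)$ and is reused in Proposition~\ref{lem3.1}, whereas your $\xi xt$-splitting argument relies on the projectivity of the $\mathrm{H}^n(P_t)$ to kill off-diagonal extensions and so does not immediately port to the flat case. If you want to keep your cohomology-level approach for $(2)\Rightarrow(1)$, you need to state and prove the auxiliary lemma that $\mathrm{H}^n(I\otimes_R^\mathrm{L}X)\cong I\otimes_R\mathrm{H}^n(X)$ for all injective $I$ whenever $X$ has a semi-projective resolution with $\mathrm{C}^s\in\textsf{PGF}$, and check that the property is inherited by $X_{-1}$ and $X_1$ so the induction closes.
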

\begin{proof} (1) $\Rightarrow$ (2) For each $t$, there is a cohomologically ghost triangle\begin{align}
X_{t+1}\longrightarrow P_t\longrightarrow X_t\stackrel{\nu_t}\longrightarrow X_{t+1}[1]
\label{exact04}\tag{$\dag_t$}\end{align}in $\mathrm{D}(R)$ with $P_t\in\langle\textsf{P}\rangle_1$ and $X\simeq X_0$, such that $I\otimes^\mathrm{L}_R\nu_t$ is a ghost for all $I\in\langle\textsf{I}\rangle_1$. Let $f_0:T_0\stackrel{\simeq}\rightarrow X_0$ be a semi-projective resolution, and set $T_1=\mathrm{cn}(P_0\rightarrow T_0)[-1]$ and $T_{-1}=\mathrm{cn}(T_0\rightarrow P_{-1})$. One has two commutative diagrams of exact triangles
 \begin{center} $\xymatrix@C=18pt@R16pt{
T_1\ar[d]^{f_1}\ar[r]& P_0\ar@{=}[d]\ar[r] &T_0\ar[d]^{f_0}_\simeq\ar[r]^{\mu_0\ }&T_1[1]\ar[d] \\
X_1 \ar[r]& P_0\ar[r] &X_0\ar[r]^{\nu_0\ }&X_1[1] }$\quad $\xymatrix@C=18pt@R16pt{
T_0\ar[d]^{f_0}_\simeq\ar[r]& P_{-1}\ar@{=}[d]\ar[r] &T_{-1}\ar[d]^{f_{-1}}\ar[r]^{\mu_{-1}\ }&T_0[1]\ar[d] \\
X_0 \ar[r]& P_{-1}\ar[r] &X_{-1}\ar[r]^{\nu_{-1}\ }&X_0[1] }$
\end{center}Then $f_1$ and $f_{-1}$ are quasi-isomorphisms and $\mu_0,\mu_{-1}$ and $I\otimes_R\mu_0,I\otimes_R\mu_{-1}$  are ghosts for all $I\in\langle\textsf{I}\rangle_1$. By repeating this process on $T_1\stackrel{\simeq}\rightarrow X_1$ and $T_{-1}\stackrel{\simeq}\rightarrow X_{-1}$,  for each $t$, one has a cohomologically ghost triangle\begin{align}
T_{t+1}\longrightarrow P_t\longrightarrow T_t\stackrel{\mu_t}\longrightarrow T_{t+1}[1]
\label{exact04}\tag{$\ddag_t$}\end{align}in $\mathrm{K}(\textsf{P})$, such that $I\otimes_R\mu_t$ is a ghost for all $I\in\langle\textsf{I}\rangle_1$.
 For each $T_t$, one has a CE exact sequence $0\rightarrow T'_{t+1}\rightarrow P_t\oplus P'_t\rightarrow T_t\rightarrow0$ in $\mathrm{C}(\textsf{P})$, where $P'_t$ is a projective complex. Notice $T'_t\rightarrow T_t$ is a homotopy equivalence, one has an epimorphism $T'_t\oplus\bar{P'}_t\rightarrow T_t$ for some projective complex $\bar{P'}_t$, such that $\bar{P}_t:=\mathrm{Ker}(T'_t\oplus\bar{P'}_t\rightarrow T_t)\in\mathrm{K}(\textsf{P})$ is contractible, so $T_t\oplus \bar{P}_t\cong T'_t\oplus\bar{P'}_t$ in $\mathrm{C}(R)$ and $\bar{P}_t$ is a projective complex. Thus we have a degreewise split CE exact sequence
$$0\rightarrow T_{t+1}\oplus\bar{P}_{t+1}\rightarrow P_t\oplus P'_t\oplus \bar{P}_t\oplus\bar{P'}_{t+1}\rightarrow T_t\oplus \bar{P}_t\rightarrow0,$$
 so that the induced triangle is isomorphic to $(\ddag_t)$ in $\mathrm{K}(\textsf{P})$, it implies that $$0\rightarrow\mathrm{H}^s(I\otimes_R(T_t\oplus \bar{P}_t))\rightarrow\mathrm{H}^s(I\otimes_R(P_t\oplus P'_t\oplus \bar{P}_t\oplus\bar{P'}_{t+1}))\rightarrow\mathrm{H}^s(I\otimes_R(T_{t+1}\oplus\bar{P}_{t+1}))\rightarrow0$$ is exact
for all $I\in\textsf{I}$. As $0\rightarrow I\otimes_R(T_{t+1}\oplus\bar{P}_{t+1})\rightarrow I\otimes_R(P_t\oplus P'_t\oplus \bar{P}_t\oplus\bar{P'}_{t+1})\rightarrow I\otimes_R(T_t\oplus \bar{P}_t)\rightarrow0$ is exact, it follows by
  Lemma \ref{lem0.0} that this sequence is CE exact, so the sequence $0\rightarrow \mathrm{Hom}_{R}(T_t\oplus \bar{P}_t,I^+)\rightarrow \mathrm{Hom}_{R}(P_t\oplus P'_t\oplus \bar{P}_t\oplus\bar{P'}_{t+1},I^+)\rightarrow \mathrm{Hom}_{R}(T_{t+1}\oplus\bar{P}_{t+1},I^+)\rightarrow0$ is CE exact.
For $s\in\mathbb{Z}$, by \cite[Lemma 3.1]{G}, $$\mathrm{Z}^{-s}((I\otimes_R-)^+)\cong\mathrm{Z}^{-s}(\mathrm{Hom}_{R}(-,I^+))\cong\mathrm{Hom}_{\mathrm{C}(R)}(-,I^+[-s])\cong\mathrm{Hom}_{R}(\mathrm{C}^s(-),I^+),$$
one has $I\otimes_R\mathrm{C}^s(-)\cong\mathrm{C}^{s}(I\otimes_R-)$. Also $\mathrm{C}^s(P_t\oplus P'_t\oplus \bar{P}_t\oplus\bar{P'}_{t+1})\in\textsf{P}$, it follows that
$\mathrm{C}^s(T_0)\oplus\mathrm{C}^s(\bar{P}_0)\cong\mathrm{C}^s(T_0\oplus \bar{P}_0)\in\textsf{PGF}$, so $\mathrm{C}^{s}(T_0)\in\textsf{PGF}$ for all $s\in\mathbb{Z}$.

(2) $\Rightarrow$ (1) As $\mathrm{C}^{s}(T)\in\textsf{PGF}$ for all $s\in\mathbb{Z}$, one has a $\textsf{I}\otimes_R-$-exact exact sequence $$0\rightarrow\mathrm{C}^{s}(T)\stackrel{f^{s}}\rightarrow P_{-1_{s}}\rightarrow T_{-1_{s}}\rightarrow0$$  of $R$-modules with $P_{-1_{s}}\in\textsf{P}$ and $T_{-1_{s}}\in\textsf{PGF}$.
By \cite[Lemma 3.1]{G}, one has  $$\mathrm{Hom}_{R}(\mathrm{C}^{s}(T),P_{-1_{s}})\cong\mathrm{Hom}_{\mathrm{C}(R)}(T,P_{-1_{s}}[-s]),$$there is a chain map  $g^{s}:T\rightarrow P_{-1_{s}}[-s]$ such that $g^{s}d^{s-1}_T=0$. Set $P_{-1}=\bigoplus_{s\in\mathbb{Z}}P_{-1_{s}}[-s]\cong\prod_{s\in\mathbb{Z}}P_{-1_{s}}[-s]$. Then there is a chain map  $g:T\rightarrow P_{-1}$ such that $\pi^{s}g=g^s$, where $\pi^s:P_{-1}\rightarrow P_{-1_{s}}[-s]$ is the projection. We have a commutative diagram:
\begin{center} $\xymatrix@C=18pt@R=16pt{
 0\ar[r]& T \ar[d]\ar[r]& \mathrm{cn}(\mathrm{id}_T)\ar[d]\ar[r] &T[1]\ar@{=}[d]\ar[r]&0 \\
0\ar[r]& P_{-1}\ar[r]& T_{-1}\ar[r] &T[1]\ar[r]&0 }$
\end{center}it induced two commutative diagrams:\begin{center} $\xymatrix@C=14pt@R=16pt{
     &  0\ar[d]  &  &  \\
       &\mathrm{C}^{s}(T)\ar[r] \ar[d]& T^{s+1}\ar[d] \ar[r]& \mathrm{C}^{s+1}(T)\ar@{=}[d]\ar[r]&0\\
     & P_{-1_{s}} \ar[d] \ar[r] &\mathrm{C}^{s}(T_{-1})  \ar[d]\ar[r] &\mathrm{C}^{s+1}(T)  \ar[r] & 0  \\
     &    T_{-1_{s}} \ar[d]\ar@{=}[r] & T_{-1_{s}}\ar[d]  \\
    &  0& 0 &}$\quad  $\xymatrix@C=14pt@R=16pt{
      & & 0\ar[d]  & 0 \ar[d] &  \\
    &&T^{s+1}\ar[d] \ar@{=}[r] & T^{s+1} \ar[d] \\
 0\ar[r]& \mathrm{C}^{s}(T) \ar@{=}[d]\ar[r]& T^{s+1}\oplus P^s_{-1}\ar[d]\ar[r] &\mathrm{C}^{s}(T_{-1})\ar[d]\ar[r]&0 \\
 0\ar[r]& \mathrm{C}^{s}(T) \ar[r]& P_{-1_{s}}\ar[d]\ar[r] &T_{-1_{s}}\ar[d]\ar[r]&0 \\
    & & 0& 0  &}$
\end{center}Note that $T^{s+1}\oplus P^s_{-1}=\mathrm{C}^{s}(\mathrm{cn}(\mathrm{id}_T)\oplus P_{-1})$,
the sequence $0\rightarrow T\rightarrow\mathrm{cn}(\mathrm{id}_T)\oplus P_{-1}\rightarrow T_{-1}\rightarrow0$ in $\mathrm{C}(\textsf{P})$ is CE exact by Lemma \ref{lem0.0}, so that the exact sequence $0\rightarrow\mathrm{C}^{s}(T)\rightarrow T^{s+1}\oplus P^s_{-1}\rightarrow\mathrm{C}^{s}(T_{-1})\rightarrow0$ is $\textsf{I}\otimes_R-$-exact for all $s\in\mathbb{Z}$. Thus one has a cohomologically ghost triangle $$X\rightarrow P_{-1}\rightarrow X_{-1}\stackrel{\nu_{-1}}\rightarrow X[1]$$ in $\mathrm{D}(R)$, such that $T_{-1}\simeq X_{-1}$ is a semi-projective resolution and $\mathrm{C}^{s}(T_{-1})\in\textsf{PGF}$. On the other hand, there exists a CE exact sequence $0\rightarrow T_1\rightarrow P_{0}\rightarrow T\rightarrow 0$ in $\mathrm{C}(\textsf{P})$ with $P_0\in\mathrm{CE}\textrm{-}\textsf{P}$, it induces a cohomologically ghost triangle $$X_1\rightarrow P_0\rightarrow X_0\stackrel{\nu_0}\rightarrow X_1[1]$$ in $\mathrm{D}(R)$ with $P_0\in\langle\textsf{P}\rangle_1$ such that $T_{1}\simeq X_{1}$ is a semi-projective resolution and $\mathrm{C}^{s}(T_1)\in\textsf{PGF}$ for all $s\in\mathbb{Z}$. By repeating this process on $X_{-1}$ and $X_1$, one has $X\in\mathcal{PGF}(\xi)$.
\end{proof}

\subsection{\bf $\xi$-$\mathcal{G}$flat objects in $\mathrm{D}(R)$}

Following \cite{EJT}, an $R$-module $M$ is called \emph{Gorenstein flat} if there
is an $\textsf{I}\otimes_R-$-exact exact complex \begin{center}$F:\ \cdots\longrightarrow F^{-1}\longrightarrow F^0\longrightarrow F^1\longrightarrow\cdots$\end{center}of flat $R$-modules
such that $M=\textrm{Z}^0(F)$. The class of  Gorenstein flat
left $R$-modules is denoted by $\textsf{GF}$.

\begin{df}\label{df:3.1} {\rm A complex $X$ in $\mathrm{D}(R)$ is called \emph{$\xi$-$\mathcal{G}$flat} if for each $t$, there is a cohomologically ghost triangle $X_{t+1}\rightarrow F_t\rightarrow X_t\stackrel{\nu_t}\rightarrow X_{t+1}[1]$ in $\mathrm{D}(R)$ with $F_t\in\langle\textsf{F}\rangle_1$, such that $$I\otimes_R^\mathrm{L}X_{t+1}\longrightarrow I\otimes_R^\mathrm{L}F_t\longrightarrow I\otimes_R^\mathrm{L}X_t\stackrel{I\otimes^\mathrm{L}_R\nu_t}\longrightarrow I\otimes_R^\mathrm{L}X_{t+1}[1]$$ is a cohomologically ghost triangle in Ab for all $I\in\langle\textsf{I}\rangle_1$ and $X\simeq X_0$.  The symbol $\mathcal{GF}(\xi)$  denote the full subcategory of $\xi$-$\mathcal{G}$flat objects of $\mathrm{D}(R)$.}
\end{df}

For any ring $R$, $(\textsf{GF},\textsf{GF}^\bot)$ is a complete hereditary cotorsion pair in $\mathrm{Mod}R$ by \cite[Corollary 4.12]{SS}. The next result proves that  $\mathcal{GF}(\xi)=\mathcal{GF}_\textsf{P}$.

\begin{prop}\label{lem1.0} Let $X\in\mathrm{D}(R)$. The following statements are equivalent:

$(1)$ $X$ is $\xi$-$\mathcal{G}$flat in $\mathrm{D}(R)$;

$(2)$ For any semi-projective resolution $T\stackrel{\simeq}\rightarrow X$, $\mathrm{C}^{s}(T)\in\textsf{GF}$ for all $s\in\mathbb{Z}$.
\end{prop}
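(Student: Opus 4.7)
The argument will parallel that of Proposition \ref{lem1.1}, replacing the class $\langle\textsf{P}\rangle_1$ of projective complexes by $\langle\textsf{F}\rangle_1$ in the defining triangles and $\textsf{PGF}$ by $\textsf{GF}$ at the module level. The conclusion is exactly the flat analogue $\mathcal{GF}(\xi)=\mathcal{GF}_\textsf{P}$ of the equality $\mathcal{PGF}(\xi)=\mathcal{PGF}_\textsf{P}$ proved in Proposition \ref{lem1.1}.

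For $(1)\Rightarrow(2)$, I would start from the defining $\xi$-$\mathcal{G}$flat triangles $(\dag_t):X_{t+1}\to F_t\to X_t\stackrel{\nu_t}\to X_{t+1}[1]$ with $F_t\in\langle\textsf{F}\rangle_1$ and $I\otimes_R^{\mathrm{L}}\nu_t$ a ghost for every $I\in\langle\textsf{I}\rangle_1$. Fixing a semi-projective resolution $T_0\stackrel{\simeq}\to X\simeq X_0$, I would iteratively set $T_{t+1}=\mathrm{cn}(F'_t\to T_t)[-1]$ and $T_{-1}=\mathrm{cn}(T_0\to F'_{-1})$, where each $F'_t$ is a quasi-isomorphic replacement of $F_t$ by a complex of flat modules. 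The cone-diagram argument from Proposition \ref{lem1.1} then yields triangles $(\ddag_t):T_{t+1}\to F'_t\to T_t\stackrel{\mu_t}\to T_{t+1}[1]$ in the homotopy category, with each $I\otimes_R\mu_t$ a ghost. I would next convert each $(\ddag_t)$ into a degreewise split CE exact sequence by the same contractible-completion trick, absorbing contractible flat summands $\bar{F}_t$ in place of contractible projective summands. Applying $I\otimes_R-$, invoking Lemma \ref{lem0.0} together with the tensor-ghost property, and using the identity $\mathrm{C}^s(I\otimes_R-)\cong I\otimes_R\mathrm{C}^s(-)$ established via \cite[Lemma 3.1]{G} as in the PGF proof, one transports the resulting CE exactness to the module level and shows that $\mathrm{C}^s(T_0)$ is a direct summand of a Gorenstein-flat module, so $\mathrm{C}^s(T)\in\textsf{GF}$ for every $s$.

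For $(2)\Rightarrow(1)$, assuming $\mathrm{C}^s(T)\in\textsf{GF}$ for every $s$, I would pick an $\textsf{I}\otimes_R-$-exact sequence $0\to\mathrm{C}^s(T)\to F_{-1_s}\to T_{-1_s}\to 0$ with $F_{-1_s}\in\textsf{F}$ and $T_{-1_s}\in\textsf{GF}$, assemble these via \cite[Lemma 3.1]{G} into a chain map $T\to F_{-1}:=\bigoplus_{s\in\mathbb{Z}}F_{-1_s}[-s]$, and complete it using $\mathrm{cn}(\mathrm{id}_T)$ to form a CE exact sequence $0\to T\to\mathrm{cn}(\mathrm{id}_T)\oplus F_{-1}\to T_{-1}\to 0$, mirroring the corresponding step of Proposition \ref{lem1.1}. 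The induced triangle $X\to F_{-1}\to X_{-1}\stackrel{\nu_{-1}}\to X[1]$ in $\mathrm{D}(R)$ is cohomologically ghost with $F_{-1}\in\langle\textsf{F}\rangle_1$, and the $\textsf{I}\otimes_R-$-exactness of each module-level sequence gives the required tensor-ghost condition on $\nu_{-1}$. In the opposite direction, a CE exact sequence $0\to T_1\to P_0\to T\to 0$ in $\mathrm{C}(\textsf{P})$ with $P_0\in\mathrm{CE}\textrm{-}\textsf{P}\subseteq\langle\textsf{F}\rangle_1$ furnishes a triangle $X_1\to F_0\to X_0\stackrel{\nu_0}\to X_1[1]$ with the required properties. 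Iterating both constructions produces the full bi-infinite system of triangles.

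The main obstacle is that $F_t\in\langle\textsf{F}\rangle_1$ is only flat and not semi-projective, so the cone construction cannot be executed directly inside $\mathrm{K}(\textsf{P})$ as in the PGF case. One must work instead with flat representatives in $\mathrm{K}(\textsf{F})$ and verify that the contractible-completion procedure still yields a degreewise split CE exact sequence of complexes of flats, and that $\textsf{I}\otimes_R-$-exactness is preserved through the passage to $\mathrm{C}^s$ via Lemma \ref{lem0.0}; this is the principal technical point on which the argument rests.
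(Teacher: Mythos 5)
Your overall strategy is right and your treatment of $(2)\Rightarrow(1)$ matches the paper's, but you have misidentified the key technical step in $(1)\Rightarrow(2)$, and the workaround you propose for it would not go through. You suggest replacing each $F_t\in\langle\textsf{F}\rangle_1$ by a ``flat representative $F'_t$ in $\mathrm{K}(\textsf{F})$'' and then reproducing the contractible-completion argument there. That route is obstructed in two essential ways: quasi-isomorphic complexes in $\mathrm{K}(\textsf{F})$ need not be homotopy equivalent, so the step of Proposition~\ref{lem1.1} where $T'_t\to T_t$ is upgraded to a homotopy equivalence fails; and lifting $F_t\to X_t$ through the surjective quasi-isomorphism $T_t\xrightarrow{\simeq}X_t$ requires the source to be semi-projective, which a generic complex of flats is not.

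The paper's proof is simpler and avoids this entirely. One chooses \emph{semi-projective resolutions} $g_t:P_t\xrightarrow{\simeq}F_t$ of the flat approximations themselves, so that all of the cone constructions, all the ghost-lifting, and all the contractible-completion takes place inside $\mathrm{K}(\textsf{P})$ exactly as in Proposition~\ref{lem1.1}; one never leaves the projective setting. The only thing that changes is the very last observation: since $F_t\in\langle\textsf{F}\rangle_1=\mathcal{F}_{\textsf{P}}$, the cokernels $\mathrm{C}^s(P_t)$ are \emph{flat} rather than projective (for a flat module, the kernel of any epimorphism from a projective is again flat, so every syzygy in a projective resolution of a flat module is flat), and the resulting $\textsf{I}\otimes_R-$-exact exact complex resolving $\mathrm{C}^s(T_0)$ therefore has flat, not projective, components. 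That is what turns the conclusion from $\mathrm{C}^s(T_0)\in\textsf{PGF}$ into $\mathrm{C}^s(T_0)\in\textsf{GF}$. In short, the ``principal technical point'' you flag at the end is a nonissue once one resolves the $F_t$ semi-projectively, and the proof then really is the PGF proof verbatim with that one extra replacement step and a flat-instead-of-projective observation at the end.
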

\begin{proof} (1) $\Rightarrow$ (2) For each $t$, there is a cohomologically ghost triangle\begin{align}
X_{t+1}\longrightarrow F_t\longrightarrow X_t\stackrel{\nu_t}\longrightarrow X_{t+1}[1]
\label{exact04}\tag{$\dag_t$}\end{align}in $\mathrm{D}(R)$ with $F_t\in\langle\textsf{F}\rangle_1$ and $X\simeq X_0$, such that $I\otimes^\mathrm{L}_R\nu_t$ is a ghost for all $I\in\langle\textsf{I}\rangle_1$. Let $f_0:T_0\stackrel{\simeq}\rightarrow X_0$, $g_0:P_0\stackrel{\simeq}\rightarrow F_0$ and $g_{-1}:P_{-1}\stackrel{\simeq}\rightarrow F_{-1}$ be semi-projective resolutions, and set $T_1=\mathrm{cn}(P_0\rightarrow T_0)[-1]$ and $T_{-1}=\mathrm{cn}(T_0\rightarrow P_{-1})$. One has two commutative diagrams of exact triangles
 \begin{center} $\xymatrix@C=18pt@R16pt{
T_1\ar[d]^{f_1}\ar[r]& P_0\ar[d]^{g_0}_\simeq\ar[r] &T_0\ar[d]^{f_0}_\simeq\ar[r]^{\mu_0\ }&T_1[1]\ar[d] \\
X_1 \ar[r]& F_0\ar[r] &X_0\ar[r]^{\nu_0\ }&X_1[1] }$\quad $\xymatrix@C=18pt@R16pt{
T_0\ar[d]^{f_0}_\simeq\ar[r]& P_{-1}\ar[d]^{g_{-1}}_\simeq\ar[r] &T_{-1}\ar[d]^{f_{-1}}\ar[r]^{\mu_{-1}\ }&T_0[1]\ar[d] \\
X_0 \ar[r]& F_{-1}\ar[r] &X_{-1}\ar[r]^{\nu_{-1}\ }&X_0[1] }$
\end{center}Then $f_1$ and $f_{-1}$ are quasi-isomorphisms and $\mu_0,\mu_{-1}$ and $I\otimes_R\mu_0,I\otimes_R\mu_{-1}$  are ghosts for all $I\in\langle\textsf{I}\rangle_1$. By repeating this process on $T_1\stackrel{\simeq}\rightarrow X_1$ and $T_{-1}\stackrel{\simeq}\rightarrow X_{-1}$,  for each $t$, one has a cohomologically ghost triangle\begin{align}
T_{t+1}\longrightarrow P_t\longrightarrow T_t\stackrel{\mu_t}\longrightarrow T_{t+1}[1]
\label{exact04}\tag{$\ddag_t$}\end{align}in $\mathrm{K}(\textsf{P})$, such that $I\otimes_R\mu_t$ is a ghost for all $I\in\langle\textsf{I}\rangle_1$.
 Now, by analogy with the proof of Proposition \ref{lem1.1}, one has $\mathrm{C}^{s}(T_0)\in\textsf{GF}$ for all $s\in\mathbb{Z}$.

(2) $\Rightarrow$ (1) By analogy with the proof of Proposition \ref{lem1.1}.
\end{proof}

 A ring $R$ is called \emph{virtually Gorenstein} if $\textsf{GP}^\bot={^\bot}\textsf{GI}$. By \cite[Proposition 3.16]{WE}, $(\textsf{GP},\textsf{GP}^{\bot})$ is a complete hereditary cotorsion pair in $\mathrm{Mod}R$ whenever $R$ is virtually Gorenstein.
 The next corollary is an immediate consequence of Theorem \ref{lem2.5} and the preceding propositions.

\begin{cor}\label{lem1.5} $(1)$ If $R$ is a virtually Gorenstein ring, then $(\mathcal{GP}(\xi),\mathcal{GP}(\xi)^{\bot_\xi})$ is a complete hereditary cotorsion pair in $\mathrm{D}(R)$ with respect to $\xi$.

$(2)$ $({^{\bot_\xi}}\mathcal{GI}(\xi),\mathcal{GI}(\xi))$ is a complete hereditary cotorsion pair in $\mathrm{D}(R)$ with respect to $\xi$.

$(3)$ $(\mathcal{PGF}(\xi),\mathcal{PGF}(\xi)^{\bot_\xi})$ is a complete hereditary cotorsion pair in $\mathrm{D}(R)$ with respect to $\xi$.

$(4)$ $(\mathcal{GF}(\xi),\mathcal{GF}(\xi)^{\bot_\xi})$ is a complete hereditary cotorsion pair in $\mathrm{D}(R)$ with respect to $\xi$.

$(5)$ The ring $R$ is virtually Gorenstein if and only if $\mathcal{GP}(\xi)^{\bot_\xi}={^{\bot_\xi}}\mathcal{GI}(\xi)$.
\end{cor}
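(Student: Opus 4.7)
The plan is to derive (1)--(4) as direct applications of Theorem \ref{lem2.5}, once the four identifications
\[
\mathcal{GP}(\xi)=\mathcal{GP}_\textsf{P},\ \mathcal{GI}(\xi)=\mathcal{GI}_\textsf{I},\ \mathcal{PGF}(\xi)=\mathcal{PGF}_\textsf{P},\ \mathcal{GF}(\xi)=\mathcal{GF}_\textsf{P}
\]
furnished by Propositions \ref{lem1.3}, \ref{lem1.4}, \ref{lem1.1} and \ref{lem1.0} (together with Lemma \ref{lem2.1}) are in hand. On the module side, the complete hereditary cotorsion pairs $(\textsf{GP},\textsf{GP}^\bot)$, $({^\bot}\textsf{GI},\textsf{GI})$, $(\textsf{PGF},\textsf{PGF}^\bot)$ and $(\textsf{GF},\textsf{GF}^\bot)$ are guaranteed by \cite[Proposition 3.16]{WE} (needing virtual Gorensteinness only for the first), \cite[Theorem 5.6]{SS}, \cite[Theorem 4.9]{SS} and \cite[Corollary 4.12]{SS}. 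Feeding each in turn into Theorem \ref{lem2.5} produces a complete hereditary $\xi$-cotorsion pair in $\mathrm{D}(R)$; the identifications above rewrite one half as $\mathcal{GP}(\xi)$, ${^{\bot_\xi}}\mathcal{GI}(\xi)$, $\mathcal{PGF}(\xi)$ or $\mathcal{GF}(\xi)$, and since a cotorsion pair is determined by either half, the opposite halves are forced to be the stated perpendiculars.

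For (5), the forward direction is formal: parts (1) and (2) combined with Theorem \ref{lem2.5} give $\mathcal{GP}(\xi)^{\bot_\xi}=(\textsf{GP}^\bot)_\textsf{I}$ and ${^{\bot_\xi}}\mathcal{GI}(\xi)=({^\bot}\textsf{GI})_\textsf{P}$, and the virtually Gorenstein equality $\textsf{GP}^\bot={^\bot}\textsf{GI}$ transfers verbatim through the semi-injective and semi-projective resolution characterisations. For the converse I would test the assumed derived equality on stalk complexes: given $B\in\textsf{GP}^\bot$, the ordinary injective resolution $B\to J_B$ is a CE $\textsf{GP}^\bot$ complex (components, cycles and homologies all lie in $\textsf{GP}^\bot$ by the hereditariness of $(\textsf{GP},\textsf{GP}^\bot)$ from \cite[Corollary 3.4(1)]{CIS}), so Lemma \ref{lem2.23} places $B[0]$ in $(\textsf{GP}^\bot)_\textsf{I}=\mathcal{GP}(\xi)^{\bot_\xi}$. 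The hypothesis then lands $B[0]$ in ${^{\bot_\xi}}\mathcal{GI}(\xi)=({^\bot}\textsf{GI})_\textsf{P}$, which means some semi-projective resolution of $B$ has cokernels in ${^\bot}\textsf{GI}$, and reading off the cokernel in degree zero gives $B\in{^\bot}\textsf{GI}$. The reverse inclusion ${^\bot}\textsf{GI}\subseteq\textsf{GP}^\bot$ is symmetric, using Proposition \ref{lem1.4} and (2).

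The main obstacle lies in the converse of (5). Without virtual Gorensteinness one cannot invoke the direction of Theorem \ref{lem2.5} that requires $(\textsf{GP},\textsf{GP}^\bot)$ to be complete, so the identification $\mathcal{GP}(\xi)^{\bot_\xi}=(\textsf{GP}^\bot)_\textsf{I}$ has to be established directly: from Lemma \ref{lem2.23} one obtains $\mathrm{CE}\text{-}\textsf{GP}\subseteq\mathcal{GP}_\textsf{P}$ and, via Lemma \ref{lem2.2}, the containment $(\textsf{GP}^\bot)_\textsf{I}\subseteq\mathcal{GP}_\textsf{P}^{\bot_\xi}$; for the other inclusion one must verify that any semi-projective resolution of an $X\in\mathcal{GP}_\textsf{P}$ is in fact CE $\textsf{GP}$, pushing $\textsf{GP}$-membership from the cokernels through to the boundaries, cycles and homologies by hereditariness of $(\textsf{GP},\textsf{GP}^\bot)$ and then applying Lemma \ref{lem2.2} once more. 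Once this bookkeeping is handled, the translation of $\xi xt^1_\xi$-vanishing back to $\mathrm{Ext}^1_R$-vanishing at the module level via Lemmas \ref{lem2.3}, \ref{lem4.1} and \cite[Lemma 9.1]{E} is routine.
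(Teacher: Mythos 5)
Your plan for parts (1)--(4) is exactly what the paper intends (the paper declares the corollary ``an immediate consequence of Theorem \ref{lem2.5} and the preceding propositions'' and gives no further detail): feed the module-side complete hereditary cotorsion pairs into Theorem \ref{lem2.5}, and use Propositions \ref{lem1.3}, \ref{lem1.4}, \ref{lem1.1}, \ref{lem1.0} together with Lemmas \ref{lem2.1} and \ref{lem2.2} to recognize the induced classes as the relative Gorenstein ones. You also rightly flag that part (5) is \emph{not} actually immediate, and your strategy for it is sound, but two small points need fixing.

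First, for the forward direction of (5), the statement that the module-level equality ``transfers verbatim'' through the two resolution characterisations is hiding a genuine step: letting $\textsf{C}:=\textsf{GP}^\bot={^\bot}\textsf{GI}$, you need $\mathcal{C}_\textsf{I}=\mathcal{C}_\textsf{P}$, which is precisely the content of Corollary \ref{lem2.7} (proved there via $\overline{\mathrm{Ext}}^1$-vanishing and \cite[Theorem 9.4]{E}, exploiting that $\textsf{C}$ is simultaneously the right half of $(\textsf{GP},\textsf{C})$ and the left half of $(\textsf{C},\textsf{GI})$). You should cite Corollary \ref{lem2.7} rather than asserting the identification.

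Second, in the converse the attribution of the two inclusions is reversed: Lemma \ref{lem2.23} gives $\mathrm{CE}\text{-}\textsf{GP}\subseteq\mathcal{GP}_\textsf{P}$, and taking $\xi$-perps via Lemma \ref{lem2.2}(1)$\Leftrightarrow$(2) yields $\mathcal{GP}_\textsf{P}^{\bot_\xi}\subseteq(\mathrm{CE}\text{-}\textsf{GP})^{\bot_\xi}=(\textsf{GP}^\bot)_\textsf{I}$, i.e.\ the \emph{easy} inclusion, not the one you claim; conversely, showing that a semi-projective resolution $T$ of any $X\in\mathcal{GP}_\textsf{P}$ lies in $\mathrm{CE}\text{-}\textsf{GP}$ and then applying Lemma \ref{lem2.2}(2) gives $(\textsf{GP}^\bot)_\textsf{I}\subseteq\mathcal{GP}_\textsf{P}^{\bot_\xi}$. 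Also, when verifying $T\in\mathrm{CE}\text{-}\textsf{GP}$, hereditariness (i.e.\ $\textsf{GP}$ being resolving) gives $\mathrm{B}^s(T),\mathrm{Z}^s(T)\in\textsf{GP}$ as kernels of epimorphisms, but $\mathrm{H}^s(T)=\mathrm{Z}^s(T)/\mathrm{B}^s(T)$ is a cokernel of a monomorphism, which resolving does not cover; instead appeal to Lemma \ref{lem2.1}(1)$\Rightarrow$(3), which directly gives $\mathrm{H}^s(X)\in\textsf{GP}$. (Alternatively, and more cleanly, one can argue on the semi-injective side: $\textsf{GP}^\bot$ is coresolving, so any semi-injective resolution $I$ of $Y\in(\textsf{GP}^\bot)_\textsf{I}$ lies in $\mathrm{CE}\text{-}\textsf{GP}^\bot$ without any homology subtlety, and Lemma \ref{lem2.1}(1)$\Leftrightarrow$(2) finishes.) With these corrections your stalk-complex argument for the converse is complete and is precisely the detail the paper leaves to the reader.
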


\subsection{\bf Cotorsion pairs arising from complexes of finite dimensions}

 For $n\geq0$, let  $\textsf{P}_{n}$ (resp. $\textsf{F}_{n}$,  $\textsf{I}_{n}$) be the class of $R$-modules of projective
(resp. flat, injective) dimension $\leq n$.
  An $R$-module $M$ has \emph{Gorenstein projective (resp. projectively coresolved Gorenstein flat, Gorenstein flat, Gorenstein injective) dimension at most $n$}, $\mathrm{Gpd}_RM\leq n$ (resp. $\mathrm{PGFd}_RM\leq n$, $\mathrm{Gfd}_RM\leq n$, $\mathrm{Gid}_RM\leq n$),
 if $M$ has a Gorenstein projective (resp. projectively coresolved Gorenstein flat, Gorenstein flat, Gorenstein injective) resolution of length $n$. Let $\textsf{GP}_n$ (resp. $\textsf{PGF}_n$, $\textsf{GF}_n$, $\textsf{GI}_n$)
be the class of $R$-modules $M$ of $\mathrm{Gpd}_RM\leq n$ (resp. $\mathrm{PGFd}_RM\leq n$, $\mathrm{Gfd}_RM\leq n$, $\mathrm{Gid}_RM\leq n$).

\begin{fact}\label{lem:2.2}{\rm For $n\geq0$, the following cotorsion pairs in $\mathrm{Mod}R$ is complete and hereditary:

(1) $(\textsf{P}_{n},\textsf{P}_n^\perp)$ by \cite[Theorem 7.4.6]{EJ}.

(2) $(\textsf{F}_{n},\textsf{F}_n^\perp)$ by \cite[Theorem 4.1.3]{GT}.

 (3) $({^\perp}\textsf{I}_{n},\textsf{I}_n)$ by \cite[Theorem 4.1.7]{GT}.

 (4) $(\textsf{PGF}_n,\textsf{P}_n^\perp\cap\textsf{PGF}^\bot)$ by \cite[Corollary 2.6]{GLZ}.

 (5) $(\textsf{GF}_n,\textsf{F}_n^\perp\cap\textsf{PGF}^\bot)$ by \cite[Theorem A]{M}.

 (6) $({^\perp}\textsf{I}_n\cap{^\perp}\textsf{GI},\textsf{GI}_n)$ by (3), \cite[Theorem 5.6]{SS} and the dual arguments of \cite[Theorem 3.4]{GLZ}.

 (7) $(\textsf{GP}_n,\textsf{P}_n^\perp\cap\textsf{GP}^\bot)$ whenever $R$  is virtually Gorenstein by \cite[Proposition 3.16]{WE} and \cite[Corollary 2.6]{GLZ}.}
\end{fact}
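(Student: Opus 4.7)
The Fact is a compilation of known module-theoretic results, so my plan is curatorial in spirit: I would verify each of the seven items by appealing to the cited reference, while organizing the write-up around a single unifying principle. Items (1)--(3) record the classical completeness of cotorsion pairs arising from modules of bounded projective, flat, or injective dimension, and I would dispatch them with direct references to \cite[Theorem 7.4.6]{EJ} and \cite[Theorems 4.1.3 and 4.1.7]{GT}. The substantive content lies in items (4)--(7), which share a common template: each is obtained by shifting a known complete hereditary Gorenstein cotorsion pair up to dimension~$n$.

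The plan for (4)--(7) is to invoke the underlying base cotorsion pairs first --- $(\textsf{PGF},\textsf{PGF}^\bot)$ and $(\textsf{GF},\textsf{GF}^\bot)$ from \cite[Theorems 4.9 and 4.11]{SS}, $({^\bot}\textsf{GI},\textsf{GI})$ from \cite[Theorem 5.6]{SS}, and, in the virtually Gorenstein case, $(\textsf{GP},\textsf{GP}^\bot)$ from \cite[Proposition 3.16]{WE}. The lift from $n=0$ to arbitrary $n\geq 0$ is exactly what \cite[Corollary 2.6]{GLZ} and its dual are designed to do. The conceptual input is an Auslander--Buchweitz style approximation: for any module $M$ of $\textsf{A}$-dimension $\leq n$, there is a short exact sequence $0\to K\to G\to M\to 0$ with $G\in\textsf{A}$ and $K$ of projective dimension $\leq n-1$ (and dually for the injective-side pair). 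Combined with the hereditariness of the base pair, this approximation is what produces the identification of the right half as $\textsf{P}_n^\perp\cap\textsf{A}^\perp$, and it is what makes completeness propagate from $n=0$ to higher $n$ via the Salce lemma.

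I would write out one case --- say (4), since it encodes the newest base pair and is the cleanest application of \cite[Corollary 2.6]{GLZ} --- in full detail, emphasizing the characterization $M\in\textsf{PGF}_n\iff M$ fits in a short exact sequence $0\to K\to G\to M\to 0$ with $G\in\textsf{PGF}$ and $\pd_RK\leq n-1$, and noting that this immediately forces $\textsf{PGF}_n^\bot=\textsf{P}_n^\bot\cap\textsf{PGF}^\bot$ by dimension shifting on $\ext^1$. Cases (5) and (7) then follow by swapping $\textsf{PGF}$ for $\textsf{GF}$ or $\textsf{GP}$, and case (6) is the formal dual, where the roles of $\textsf{P}_n$ and $\textsf{I}_n$ interchange and the left half becomes ${^\bot}\textsf{I}_n\cap{^\bot}\textsf{GI}$.

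I do not anticipate any mathematical obstacle, since every ingredient is already in the literature; the only real hazards are bookkeeping hazards. The principal one is the left/right asymmetry in the Fact: items (4), (5), (7) give the dimension bound on the left half and place the $\textsf{P}_n^\perp$ correction on the right, while item (6) places ${^\bot}\textsf{I}_n$ on the left and the dimension bound on the right. In a unified write-up I would flag this explicitly and then produce (6) as the formal dual of (3) applied inside $(\textsf{GI})^\bot$-theory, rather than proving it from scratch. Beyond that, the proof is essentially a matter of citing the appropriate reference for each of the seven lines.
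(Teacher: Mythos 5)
Your approach matches the paper's: the Fact is a compilation of known results and the paper offers no argument beyond the seven citations, so ``dispatching each item with a direct reference'' is exactly what is done. Your added conceptual scaffolding (the Auslander--Buchweitz approximation that raises a complete hereditary Gorenstein pair from $n=0$ to $n$, and the identification of the orthogonal via dimension-shifting on $\ext^1$) is correct and is indeed the mechanism behind \cite[Corollary 2.6]{GLZ}, though the paper does not spell it out.

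One small caveat on item (5): you treat it as ``swap $\textsf{PGF}$ for $\textsf{GF}$'' inside the same GLZ template, but the template there would produce $\textsf{F}_n$ in place of $\textsf{P}_n$ (since Gorenstein flat modules admit flat, not projective, right approximations), and in fact the paper does not route (5) through \cite{GLZ} at all --- it cites \cite[Theorem A]{M}, which is the version tailored to the flat setting with the observation that $\textsf{F}_n^\perp\cap\textsf{PGF}^\perp=\textsf{F}_n^\perp\cap\textsf{GF}^\perp$. Also, for the base pair $(\textsf{GF},\textsf{GF}^\perp)$ the relevant reference is \cite[Corollary 4.12]{SS}, not Theorem 4.11. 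Neither issue affects the validity of the overall plan, but a careful write-up should keep (5) on its own citation rather than folding it into the GLZ/PGF template.
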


Following \cite{B}, we define inductively the \emph{$\xi$-$\mathcal{G}$projective dimension} $\xi\textrm{-Gpd}X$ of a complex
$X$ in $\mathrm{D}(R)$. When $X=0$, put $\xi\textrm{-Gpd}X=-1$. If $X\in\mathcal{GP}(\xi)$ then define $\xi\textrm{-Gpd}X=0$. For $n\geq1$,
define $\xi\textrm{-Gpd}X\leq n$ if there is  a cohomologically ghost triangle $K\rightarrow G\rightarrow X\rightarrow K[1]$, such that $G\in\mathcal{GP}(\xi)$ and $\xi\textrm{-Gpd}K\leq n-1$. This definition
is quite
different from those defined for cohomologically bounded complexes by Foxby \cite{F} and Yassemi \cite{Ya} and those
defined for unbounded complexes by Veliche \cite{V}.

Similarly, one can define $\xi$-projective dimension $\xi\textrm{-pd}X$, $\xi$-flat dimension $\xi\textrm{-fd}X$,  projectively coresolved $\xi$-$\mathcal{G}$flat dimension $\xi\textrm{-PGFd}X$, $\xi$-$\mathcal{G}$flat dimension $\xi\textrm{-Gfd}X$ of $X$. Let $\mathcal{P}(\xi)_n$ (resp. $\mathcal{F}(\xi)_n$, $\mathcal{GP}(\xi)_n$, $\mathcal{PGF}(\xi)_n$, $\mathcal{GF}(\xi)_n$)
be the class of complex $X$ of $\xi\textrm{-pd}X\leq n$ (resp. $\xi\textrm{-fd}X\leq n$, $\xi\textrm{-Gpd}X\leq n$, $\xi\textrm{-PGFd}X\leq n$, $\xi\textrm{-Gfd}X\leq n$).  Dually, one can define $\xi$-$\mathcal{G}$injective dimension $\xi\textrm{-Gid}X$ and $\xi$-injective dimension $\xi\textrm{-id}X$ of $X$.

\begin{prop}\label{lem3.1} Let $X\in\mathrm{D}(R)$ and $n\geq0$.

$(1)$ $X\in\mathcal{GP}(\xi)_n$ if and only if $X\in(\mathcal{GP}_{n})_\textsf{P}$.

$(2)$ $X\in\mathcal{PGF}(\xi)_n$ if and only if $X\in(\mathcal{PGF}_{n})_\textsf{P}$.

$(3)$ $X\in\mathcal{GF}(\xi)_n$ if and only if $X\in(\mathcal{GF}_{n})_\textsf{P}$.

$(4)$ $X\in\mathcal{F}(\xi)_n$ if and only if $X\in(\mathcal{F}_{n})_\textsf{P}$.

$(5)$ $X\in\mathcal{P}(\xi)_n$ if and only if $X\in(\mathcal{P}_{n})_\textsf{P}$.

$(6)$ $X\in\mathcal{GI}(\xi)_n$ if and only if $X\in(\mathcal{GI}_{n})_\textsf{I}$.

$(7)$ $X\in\mathcal{I}(\xi)_n$ if and only if $X\in(\mathcal{I}_{n})_\textsf{I}$.
\end{prop}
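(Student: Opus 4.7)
The plan is to prove all seven equivalences by a common induction on $n$, treating part (1) as the representative case and indicating which module-level cotorsion pair is needed for each of the others.

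For the base case $n = 0$: parts (1), (2), (3), (6) reduce respectively to Propositions \ref{lem1.3}, \ref{lem1.1}, \ref{lem1.0} and \ref{lem1.4}, combined with the characterizations furnished by Lemmas \ref{lem2.1} and \ref{lem2.2} applied to the hereditary cotorsion pairs $(\textsf{GP},\textsf{GP}^\perp)$, $(\textsf{PGF},\textsf{PGF}^\perp)$, $(\textsf{GF},\textsf{GF}^\perp)$ and $({}^\perp\textsf{GI},\textsf{GI})$. For parts (4), (5), (7) at $n = 0$, one uses the identification of $\mathcal{P}(\xi)$, $\mathcal{F}(\xi)$, $\mathcal{I}(\xi)$ with $\langle\textsf{P}\rangle_1$, $\langle\textsf{F}\rangle_1$, $\langle\textsf{I}\rangle_1$ from the Remark at the end of Section~1, together with the structure theorem (Lemma~\ref{lem0.0'}) for CE-projective and CE-injective complexes. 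These show directly that $X\in\langle\textsf{X}\rangle_1$ if and only if $X$ admits a semi-projective (or semi-injective) resolution whose cokernels (resp.\ kernels) lie in $\textsf{X}$.

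For the inductive step I treat part (1); the remaining parts run identically after replacing $\textsf{GP}$ by the corresponding module class and invoking the relevant complete hereditary cotorsion pair from the Fact in Section~3. Assume $X\in\mathcal{GP}(\xi)_n$, so there is a cohomologically ghost triangle $K\to G\to X\to K[1]$ in $\xi$ with $G\in\mathcal{GP}(\xi)$ and $K\in\mathcal{GP}(\xi)_{n-1}$. By the construction in the proof of Theorem \ref{lem2.5} (and the correspondence noted in Section~1 between cohomologically ghost triangles in $\mathrm{D}(R)$ and CE-exact sequences in $\mathrm{C}(R)$), this triangle can be realized by a CE-exact sequence $0\to K'\to G'\to P\to 0$ of semi-projective complexes with $K'\simeq K$, $G'\simeq G$ and $P\simeq X$. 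Applying $\mathrm{C}^s$ and invoking Lemma \ref{lem0.0} yields short exact sequences of modules
\[
0\longrightarrow\mathrm{C}^s(K')\longrightarrow\mathrm{C}^s(G')\longrightarrow\mathrm{C}^s(P)\longrightarrow 0
\]
for all $s\in\mathbb{Z}$. The base case gives $\mathrm{C}^s(G')\in\textsf{GP}$, the inductive hypothesis gives $\mathrm{C}^s(K')\in\textsf{GP}_{n-1}$, and closure of $\textsf{GP}_n$ under extensions by $\textsf{GP}$ yields $\mathrm{C}^s(P)\in\textsf{GP}_n$; hence $X\in(\mathcal{GP}_n)_\textsf{P}$. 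Conversely, given a semi-projective resolution $P\xrightarrow{\simeq}X$ with $\mathrm{C}^s(P)\in\textsf{GP}_n$ for every $s$, I would pick for each $s$ a short exact sequence $0\to L^s\to G^s\to\mathrm{C}^s(P)\to 0$ with $G^s\in\textsf{GP}$ and $L^s\in\textsf{GP}_{n-1}$. These degreewise data are assembled into a single CE-exact sequence $0\to K'\to G'\to P\to 0$ of complexes with $G'\in\mathrm{CE}\text{-}\textsf{GP}$, by the horseshoe-type construction used in the second half of the proof of Theorem \ref{lem2.5}: take $G'$ to be the sum of $\bigoplus_s G^s[-s]$ with a contractible complex chosen so that the induced map to $P$ is a CE-epimorphism compatible with the prescribed quotients $G^s\to\mathrm{C}^s(P)$. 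The resulting cohomologically ghost triangle $K\to G\to X\to K[1]$ then has $G\in\mathcal{GP}(\xi)$ (base case applied to $G'$) and $K\in\mathcal{GP}(\xi)_{n-1}$ (inductive hypothesis applied to $K'$, whose $s$-th cokernel is $L^s$), so $X\in\mathcal{GP}(\xi)_n$.

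The main technical obstacle is the assembly step in the converse direction, which must be performed without access to a complete cotorsion pair $(\mathcal{GP}_n,\cdot)$ in $\mathrm{C}(R)$ over a general ring. For parts (2), (3), (4), (5), (6), (7) this assembly can be bypassed by applying Theorem \ref{lem2.5} directly to the complete hereditary cotorsion pair in $\mathrm{Mod}R$ recorded in the Fact of Section~3, which provides a complete cotorsion pair in $\mathrm{D}(R)$ with respect to $\xi$ and hence the desired approximation triangle. For part (1) the construction must be carried out by hand as outlined, with CE-exactness checked via Lemma \ref{lem0.0}.
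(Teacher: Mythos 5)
Your forward direction is essentially correct, though organized differently from the paper: you induct on $n$ and use the module-level fact that $\mathrm{Gpd}(C)\leq n$ whenever $0\to A\to B\to C\to 0$ is exact with $B\in\textsf{GP}$ and $\mathrm{Gpd}(A)\leq n-1$, whereas the paper unrolls the whole length-$n$ chain of cohomologically ghost triangles at once, lifts it to CE-exact sequences of semi-projective complexes as in the proof of Proposition~\ref{lem1.1}, and reads off a length-$n$ projective resolution of $\mathrm{C}^s(T_0)$ ending in $\mathrm{C}^s(T_n)\in\textsf{GP}$. Both work, and the inductive version is a fair reorganization.

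The converse direction is where you have a genuine gap. You propose to assemble the degreewise Gorenstein projective covers $G^s\twoheadrightarrow\mathrm{C}^s(P)$ into a CE-exact sequence $0\to K'\to G'\to P\to 0$ with $G'\in\mathrm{CE}\text{-}\textsf{GP}$ by imitating the construction in the second half of the proof of Theorem~\ref{lem2.5}, but that construction is adapted to $\mathrm{Z}^s$-type data on the codomain: it relies on the adjunction $\mathrm{Hom}_R(X^s,\mathrm{Z}^s(\tilde{Y}))\cong\mathrm{Hom}_{\mathrm{C}(R)}(X^s[-s],\tilde{Y})$, so a degreewise map into $\mathrm{Z}^s(\tilde{Y})$ lifts to a chain map into $\tilde{Y}$. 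Here you need maps \emph{onto} $\mathrm{C}^s(P)$, and the corresponding adjunction $\mathrm{Hom}_R(\mathrm{C}^s(P),-)\cong\mathrm{Hom}_{\mathrm{C}(R)}(P,-[-s])$ goes the wrong way --- it lifts maps \emph{out of} $\mathrm{C}^s(P)$, not into it --- so the cited construction does not transfer. You also suggest that for parts (2)--(7) the assembly can be bypassed by invoking the completeness of the $\xi$-cotorsion pair $((\textsf{X}_n)_\textsf{P},(\textsf{X}_n^\perp)_\textsf{I})$ furnished by Theorem~\ref{lem2.5}; but the approximation triangle produced that way has its middle term in $(\textsf{X}_n)_\textsf{P}$, which is circular since $(\textsf{X}_n)_\textsf{P}=\mathcal{X}(\xi)_n$ is exactly what you are trying to establish. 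The paper avoids both issues: instead of trying to build a CE Gorenstein projective cover, it takes an ordinary CE-projective resolution $\cdots\to Q_t\to\cdots\to Q_0\to F\to 0$ (which exists because $\mathrm{C}(R)$ with the CE-exact structure has enough CE-projectives), applies $\mathrm{C}^s$ to obtain genuine projective resolutions, and concludes that $\mathrm{C}^s(F_n)\in\textsf{GP}$ because an $n$-th syzygy of a module of Gorenstein projective dimension $\leq n$ is Gorenstein projective; then Proposition~\ref{lem1.3} and Lemma~\ref{lem2.1} give $F_n\in\mathcal{GP}(\xi)$ and hence $X\in\mathcal{GP}(\xi)_n$. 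You should replace your assembly with this CE-projective syzygy argument.
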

\begin{proof} We just prove one of the statements since the others are similar.

Let $X\in\mathcal{GP}(\xi)_n$. For $0\leq t\leq n-1$, there is a cohomologically ghost triangle\begin{align}
X_{t+1}\longrightarrow P_t\longrightarrow X_t\longrightarrow X_{t+1}[1]
\label{exact04}\tag{$\dag_t$}\end{align}in $\mathrm{D}(R)$ with $P_t\in\langle\textsf{P}\rangle_1$, $X_n\in\mathcal{GP}(\xi)$ and $X\simeq X_0$. By analogy with the proof of Proposition \ref{lem1.1}, there s a CE exact sequence $0\rightarrow T_{t+1}\rightarrow Q_t\rightarrow T_t\rightarrow0$ in $C(\textsf{P})$, such that the induced triangle is isomorphic to $\dag_t$, $Q_t\in\mathrm{CE}\textrm{-}\textsf{P}$ and $T_t$ is semi-projective. As $T_n\in\mathcal{GP}(\xi)$, it follows by Proposition \ref{lem1.3} and Lemma \ref{lem2.1} that $\mathrm{C}^s(T_n)\in\textsf{GP}$, so the exact sequence $$0\rightarrow\mathrm{C}^s(T_n)\rightarrow\mathrm{C}^s(Q_{n-1})\rightarrow\cdots\rightarrow\mathrm{C}^s(Q_0)\rightarrow\mathrm{C}^s(T_0)\rightarrow0$$ implies that $\mathrm{C}^s(T_0)\in\textsf{GP}_n$ for all $s\in\mathbb{Z}$. Thus $X\in(\mathcal{GP}_{n})_\textsf{P}$ by Lemma \ref{lem2.1}. Conversely, Let $F\stackrel{\simeq}\rightarrow X$ be a semi-projective resolution with $\mathrm{C}^s(F)\in\textsf{GP}_n$ for all $s\in\mathbb{Z}$.
Consider the CE exact sequence $\cdots\rightarrow Q_n\rightarrow Q_{n-1}\rightarrow\cdots\rightarrow Q_0\rightarrow F\rightarrow0$ with $Q_t\in\mathrm{CE}\textrm{-}\textsf{P}$ for $t\geq 0$, it induces the  cohomologically ghost triangle $F_{t+1}\rightarrow Q_t\rightarrow F_t\rightarrow F_{t+1}[1]$ where $F_t=\mathrm{coker}(Q_{t+1}\rightarrow Q_t)$ and $F\simeq F_0$. Note that $\mathrm{C}^s(F_n)\in\textsf{GP}$ for all $s\in\mathbb{Z}$, so $F_n\in\mathcal{GP}(\xi)$ by Lemma \ref{lem2.1} and Proposition \ref{lem1.3}. Hence $X\in\mathcal{GP}(\xi)_n$.
\end{proof}

\begin{cor}\label{lem3.2} Let $n$ be a non-negative integer.

$(1)$ $(\mathcal{P}(\xi)_n,\mathcal{P}(\xi)^{\bot_\xi}_n)$, $(\mathcal{F}(\xi)_n,\mathcal{F}(\xi)^{\bot_\xi}_n)$ and $({^{\bot_\xi}}\mathcal{I}(\xi)_n,\mathcal{I}(\xi)_n)$ are complete hereditary cotorsion pairs in $\mathrm{D}(R)$ with respect to $\xi$.

$(2)$ $(\mathcal{PGF}(\xi)_n,\mathcal{P}(\xi)^{\bot_\xi}_n\cap\mathcal{PGF}(\xi)^{\bot_\xi})$, $(\mathcal{GF}(\xi)_n,\mathcal{F}(\xi)^{\bot_\xi}_n\cap\mathcal{PGF}(\xi)^{\bot_\xi})$, $({^{\bot_\xi}}\mathcal{I}(\xi)_n\cap{^{\bot_\xi}}\mathcal{GI}(\xi),\mathcal{GI}(\xi)_n)$  are complete hereditary cotorsion pairs in $\mathrm{D}(R)$ with respect to $\xi$.

$(3)$  If $R$ is a virtually Gorenstein ring, then $(\mathcal{GP}(\xi)_n,\mathcal{P}(\xi)^{\bot_\xi}_n\cap\mathcal{GP}(\xi)^{\bot_\xi})$ is a complete hereditary cotorsion pair in $\mathrm{D}(R)$ with respect to $\xi$.
\end{cor}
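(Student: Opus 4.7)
The plan is to deduce Corollary \ref{lem3.2} as a formal consequence of Theorem \ref{lem2.5} (Theorem A), Proposition \ref{lem3.1}, the inventory of module-level cotorsion pairs in Fact \ref{lem:2.2}, and the earlier Corollary \ref{lem1.5}. The mechanism is uniform across (1)--(3): start with a complete hereditary cotorsion pair $(\textsf{A},\textsf{B})$ in $\mathrm{Mod}R$ from Fact \ref{lem:2.2}; apply Theorem \ref{lem2.5} to produce the complete hereditary cotorsion pair $(\mathcal{A}_{\textsf{P}},\mathcal{B}_{\textsf{I}})$ in $\mathrm{D}(R)$ with respect to $\xi$; use Proposition \ref{lem3.1} to identify the appropriate half with the named class of complexes; and use the cotorsion identity $\mathcal{B}_{\textsf{I}}=\mathcal{A}_{\textsf{P}}^{\perp_\xi}$ (dually, ${}^{\perp_\xi}\mathcal{B}_{\textsf{I}}=\mathcal{A}_{\textsf{P}}$) to describe the other half.

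For (1), I would feed $(\textsf{P}_n,\textsf{P}_n^{\perp})$, $(\textsf{F}_n,\textsf{F}_n^{\perp})$ and $({}^{\perp}\textsf{I}_n,\textsf{I}_n)$ from Fact \ref{lem:2.2}(1)--(3) into this machine. Proposition \ref{lem3.1}(4),(5),(7) translates the relevant halves into $\mathcal{P}(\xi)_n$, $\mathcal{F}(\xi)_n$ and $\mathcal{I}(\xi)_n$, and the three cotorsion pairs claimed in (1) fall out at once.

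For (2) and (3), I would apply the machine to Fact \ref{lem:2.2}(4)--(7), i.e. to $(\textsf{PGF}_n,\textsf{P}_n^{\perp}\cap\textsf{PGF}^{\perp})$, $(\textsf{GF}_n,\textsf{F}_n^{\perp}\cap\textsf{PGF}^{\perp})$, $({}^{\perp}\textsf{I}_n\cap{}^{\perp}\textsf{GI},\textsf{GI}_n)$, and, when $R$ is virtually Gorenstein, to $(\textsf{GP}_n,\textsf{P}_n^{\perp}\cap\textsf{GP}^{\perp})$. Proposition \ref{lem3.1}(1),(2),(3),(6) identifies the lifted halves with $\mathcal{GP}(\xi)_n$, $\mathcal{PGF}(\xi)_n$, $\mathcal{GF}(\xi)_n$, $\mathcal{GI}(\xi)_n$. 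The only non-automatic step is rewriting the remaining intersected half: Lemma \ref{lem2.2}(4) characterises membership of $Y$ in $\mathcal{B}_{\textsf{I}}$ by the condition that $\mathrm{Z}^s(I)$ lies in the right half for every $s\in\mathbb{Z}$ and any semi-injective resolution $Y\xrightarrow{\simeq}I$, a description that manifestly commutes with finite intersections; hence for any two classes $\textsf{X}_1,\textsf{X}_2$ each arising as the right half of a complete hereditary cotorsion pair, one has $(\textsf{X}_1\cap\textsf{X}_2)_{\textsf{I}}=(\textsf{X}_1)_{\textsf{I}}\cap(\textsf{X}_2)_{\textsf{I}}$, with the dual identity for $(-)_{\textsf{P}}$ following from Lemma \ref{lem2.1}(4). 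Combined with part (1) and with Corollary \ref{lem1.5}(1)--(3), this turns the remaining halves into precisely the intersected classes named in (2) and (3).

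I do not anticipate a substantive obstacle. The argument is purely formal once Theorem \ref{lem2.5}, Proposition \ref{lem3.1}, and Corollary \ref{lem1.5} are in hand; the only non-citation ingredient is the distributivity $(\textsf{X}_1\cap\textsf{X}_2)_{\textsf{I}}=(\textsf{X}_1)_{\textsf{I}}\cap(\textsf{X}_2)_{\textsf{I}}$ and its $(-)_{\textsf{P}}$-dual, a one-line consequence of Lemmas \ref{lem2.1}(4) and \ref{lem2.2}(4).
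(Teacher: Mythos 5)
Your proposal is correct and follows essentially the same route as the paper. The paper proves the $\mathcal{PGF}$ case explicitly: it shows $\mathcal{PGF}(\xi)_n^{\perp_\xi}=\mathcal{P}(\xi)_n^{\perp_\xi}\cap\mathcal{PGF}(\xi)^{\perp_\xi}$ by taking $X$ in the intersection, passing to a semi-injective resolution $X\xrightarrow{\simeq}I$, observing via Lemma \ref{lem2.2} and Fact \ref{lem:2.2}(4) that $\mathrm{Z}^s(I)\in\textsf{P}_n^\perp\cap\textsf{PGF}^\perp=\textsf{PGF}_n^\perp$, and then invoking Theorem \ref{lem2.5} — which is exactly your distributivity $(\textsf{X}_1\cap\textsf{X}_2)_{\textsf{I}}=(\textsf{X}_1)_{\textsf{I}}\cap(\textsf{X}_2)_{\textsf{I}}$ (justified by Lemma \ref{lem2.2}(4), which upgrades ``for some'' to ``for any'' semi-injective resolution whenever the module class is the right half of a hereditary cotorsion pair) unpacked into a two-inclusion argument. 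Your presentation makes this uniformity more transparent, but the ingredients and their deployment match the paper's.
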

\begin{proof} We just prove one of the statements since the other is similar.

 By Theorem \ref{lem2.5} and Proposition \ref{lem3.1}, $(\mathcal{PGF}(\xi)_n,\mathcal{PGF}(\xi)^{\bot_\xi}_n)$ is a complete hereditary cotorsion pair in $\mathrm{D}(R)$ with respect to $\xi$. It suffices to prove that $$\mathcal{PGF}(\xi)^{\bot_\xi}_n=\mathcal{P}(\xi)^{\bot_\xi}_n\cap\mathcal{PGF}(\xi)^{\bot_\xi}.$$ Clearly, $\mathcal{PGF}(\xi)^{\bot_\xi}_n\subseteq\mathcal{P}(\xi)^{\bot_\xi}_n\cap\mathcal{PGF}(\xi)^{\bot_\xi}$. Let $X\in\mathcal{P}(\xi)^{\bot_\xi}_n\cap\mathcal{PGF}(\xi)^{\bot_\xi}$ and $X\stackrel{\simeq}\rightarrow I$ be  a semi-injective resolution. Then $\mathrm{Z}^s(I)\in\textsf{P}_n^\perp\cap\textsf{PGF}^\bot$ by Lemma \ref{lem2.2} and Fact \ref{lem:2.2}(4) for all $s\in\mathbb{Z}$, it follows by Theorem \ref{lem2.5} that $X\in\mathcal{PGF}(\xi)^{\bot_\xi}_n$.
\end{proof}

\bigskip \centerline {\bf Acknowledgements}
\bigskip
This research was partially supported by National Natural Science Foundation of China (12571035).

\bigskip

\end{document}